\documentclass[11pt,reqno]{amsart}
\usepackage[a4paper,scale=.75,twoside=false]{geometry}  
\overfullrule=10pt                                      
\usepackage[dvipsnames]{xcolor}                                     

\usepackage{amsfonts}                                   
\usepackage{amsmath}                                    
\usepackage{amssymb}                                    
\usepackage{amsthm}                                     
\usepackage[british]{babel}                             
\usepackage[style=ext-numeric-comp,giveninits=true,articlein=false]{biblatex}
\addbibresource{EMV.bib}

\usepackage{caption}                                    
\usepackage{csquotes}
\usepackage[shortlabels,inline]{enumitem}               
\usepackage[T1]{fontenc}                                
\usepackage{graphicx}                                   
\usepackage{lmodern}                                    
\usepackage{mathtools}                                  
\usepackage{microtype}                                  
\usepackage{siunitx}                                    


\usepackage{pgf}
\usepackage{tikzexternal}


\tikzsetexternalprefix{tikz/}
\tikzexternalize


\newcommand{\map}[3]{ #1 \colon #2 \to #3 }

\newcommand{\mb}[1]{\mathbb{#1}}
\newcommand{\mc}[1]{\mathcal{#1}}

\newcommand{\mr}[1]{\mathrm{#1}}
\newcommand{\R}{ \mb{R} }

\newcommand{\N}{ \mb{N} }

\DeclarePairedDelimiter{\parn}{\lparen}{\rparen}        
\DeclarePairedDelimiter{\brac}{\lbrace}{\rbrace}        
\DeclarePairedDelimiter{\brak}{\lbrack}{\rbrack}        

\DeclarePairedDelimiter{\abs}{\lvert}{\rvert}           
\DeclarePairedDelimiter{\norm}{\lVert}{\rVert}          

\DeclarePairedDelimiter{\floor}{\lfloor}{\rfloor}       


\DeclareMathOperator{\arsinh}{arsinh}
\DeclareMathOperator*{\elimsup}{ess\,lim\,sup}
\DeclareMathOperator{\pv}{p.v.}
\DeclareMathOperator{\re}{Re}
\DeclareMathOperator{\sgn}{sgn}

\newcommand{\dd}{\mathop{}\!\mr{d}}                     


\newcommand{\ol}[1]{\overline{#1}}
\newcommand{\ul}[1]{\underline{#1}}

\newcommand{\ceq}{\coloneqq}
\newcommand{\eqc}{\eqqcolon}
              
\usepackage[colorlinks=true,urlcolor=NavyBlue,citecolor=ForestGreen,linkcolor=BrickRed]{hyperref} 
\usepackage[nameinlink,capitalise]{cleveref}                           

\theoremstyle{plain}
\newtheorem{theorem}{Theorem}[section]
\newtheorem*{main-theorem}{Main Theorem}

\newtheorem{corollary}[theorem]{Corollary}
\newtheorem{lemma}[theorem]{Lemma}
\newtheorem{proposition}[theorem]{Proposition}

\theoremstyle{definition}

\theoremstyle{remark}
\newtheorem{remark}[theorem]{Remark}
\newtheorem{assumption}{Assumption}

\crefname{assumption}{assumption}{assumptions}      
\crefname{enumi}{part}{parts}                       

\numberwithin{equation}{section}                    


\title[On the behaviour of extreme solutions]{On the precise cusped behaviour of extreme solutions to Whitham-type equations}

\author[M. Ehrnström]{Mats Ehrnström}
\email{mats.ehrnstrom@ntnu.no}
\address{Department of Mathematical Sciences, Norwegian University of Science and Technology, 7491 Trondheim, Norway}

\author[O. Mæhlen]{Ola I. H. Mæhlen}
\email{ola.mahlen@ntnu.no}
\address{Department of Mathematical Sciences, Norwegian University of Science and Technology, 7491 Trondheim, Norway}

\author[K. Varholm]{Kristoffer Varholm}
\email{kristoffer.varholm@ntnu.no}
\address{Department of Mathematical Sciences, Norwegian University of Science and Technology, 7491 Trondheim, Norway}

\thanks{The authors acknowledge the support by grant nos. 250070, 301538 and 325114 from the Research Council of Norway. Part of this research was carried out during the programme \emph{Mathematical Problems in Fluid Dynamics} at MSRI, Berkeley. The authors are thankful to the referees for suggestions which helped improve the presentation of the paper.}

\subjclass[2020]{76B15, 76B03, 35S30, 35A21}
\keywords{Highest waves, Local regularity, Whitham type, Nonlocal dispersive equations}

\begin{document}
\begin{abstract}
    We prove exact leading-order asymptotic behaviour at the origin for nontrivial solutions of two families of nonlocal equations. The equations investigated include those satisfied by the cusped highest steady waves for both the uni- and bidirectional Whitham equations. The problem is therefore analogous to that of capturing the \ang{120} interior angle at the crests of classical Stokes' waves of greatest height. In particular, our results partially settle conjectures for such extreme waves posed in a series of recent papers \cite{Ehrnstroem19Existence,Ehrnstroem19Whithams,Truong22Global}. Our methods may be generalised to solutions of other nonlocal equations, and can moreover be used to determine asymptotic behaviour of their derivatives to any order.
\end{abstract}
\maketitle

\section{Introduction}
    The \emph{Whitham equation}
    \begin{equation}
        \label{eq:unidirectionalWhitham}
        \partial_t \phi + \partial_x\parn*{K_W * \phi + \phi^2} = 0,
    \end{equation}
    where $\phi$ represents the surface profile and
    \[
        \hat{K}_W(\xi) = \int_{\R} K_W(x)e^{-ix\xi}\dd x \ceq \sqrt{\frac{\tanh(\xi)}{\xi}},
    \]
    is a fully dispersive variant of the classical Korteweg--de Vries (KdV) equation, originally proposed in \cite{Whitham67Variational}. It features some properties that the KdV equation lacks, such as wave breaking \cite{Hur17Wave,Saut22Wave}, highest waves \cite{Ehrnstroem19Whithams,Truong22Global,Ehrnstroem23Direct}, and better high-frequency modelling \cite{Emerald21Rigorousa}. While Whitham added the dispersion in an ad hoc manner, the model has since been both justified experimentally and derived from the full water-wave problem in several ways. See for instance \cite{Klein18Whitham,Moldabayev15Whitham,Carter18Bidirectional}, in addition to the aforementioned \cite{Emerald21Rigorousa}.

    Another water-wave model is similarly obtained by making the Boussinesq system -- of which the KdV equation can be viewed as a unidirectional version -- fully dispersive, so as to arrive at the Whitham--Boussinesq system
    \begin{equation}
        \label{eq:bidirectionalWhitham}
        \begin{aligned}
            \partial_t \phi + \partial_x\parn*{K_B * v + \phi v} & = 0, \\
            \partial_t v + \partial_x\parn*{\phi + v^2/2}        & = 0,
        \end{aligned}
    \end{equation}
    also called the \emph{bidirectional Whitham equation}. Here, $\phi$ again denotes the surface profile, $v$ relates to the fluid velocity at the surface, and the convolution kernel $K_B$ is defined by its symbol
    \begin{equation}
        \label{eq:bidirectionalWhithamSymbol}
        \hat{K}_B \ceq \hat{K}_W^2 = \frac{\tanh(\xi)}{\xi}.
    \end{equation}
    Strictly speaking, there are several ways to make the Boussinesq system fully dispersive, but \eqref{eq:bidirectionalWhitham} represents one of the natural candidates that have been investigated in the literature, see e.g. \cite{AcevesSanchez13Numerical, Pei19Note, Nilsson19Solitary,Emerald21Rigorous}. It is also currently the only of these fully dispersive systems that is known to admit highest steady waves \cite{Ehrnstroem19Existence}.

    Various steady solutions to the Whitham equations, both the uni- and bidirectional, have been found and studied. Of particular interest to us here are the global, locally analytic curves of periodic steady waves found in \cite{Ehrnstroem19Existence,Ehrnstroem19Whithams}, bifurcating from the line of trivial waves and approaching a so-called limiting \emph{highest} wave. These are waves whose height reach the maximal value of $c/2$ for the unidirectional Whitham equation, and $c^2/3$ for the bidirectional Whitham equation, where $c$ denotes the velocity of the wave. In the full water-wave problem, it is part of the famous Stokes' conjecture that the analogous highest Stokes' waves have angled crests, with interior angles of exactly $120^\circ$. That is, a highest steady wave with a crest at the origin satisfies
    \[
        \varphi(0)-\varphi(x) = \parn*{\frac{1}{\sqrt{3}}+o(1)}\abs{x}
    \]
    as $x \to 0$. This was ultimately proved in \cite{Plotnikov02Proof,Amick82Stokes}.

    For the Whitham equation \eqref{eq:unidirectionalWhitham}, it was conjectured by Whitham\footnote{With a minor error in the exact constant, which was pointed out in \cite{Ehrnstroem19Whithams}.} \cite{Whitham74Linear} that the local behaviour of an analogous highest wave should instead be the cusped variant
    \begin{equation}
        \label{eq:WhithamBehavior}
        \frac{c}{2}-\varphi(x) = \parn*{\sqrt{\frac{\pi}{8}}+o(1)}\abs{x}^{1/2}
    \end{equation}
    as $x \to 0$. The authors of \cite{Ehrnstroem15Whithams, Ehrnstroem19Whithams} were able to determine that there indeed was a highest periodic wave $\varphi$ for the Whitham equation. Furthermore, they showed that any bounded solution reaching that height must satisfy both
    \begin{equation}
        \label{eq:liminfAndLimsupOfHighestWaveForWhithamEquation}
        0<\liminf_{x \to 0}{\frac{c/2-\varphi(x)}{\abs{x}^{1/2}}} \quad \text{and} \quad \limsup_{x \to 0}{\frac{c/2-\varphi(x)}{\abs{x}^{1/2}}} < \infty,
    \end{equation}
    but did not establish the full limit described in \eqref{eq:WhithamBehavior}.

    More recently, the existence of full global curves of \textit{solitary} waves up to a highest wave has also been proved \cite{Truong22Global,Ehrnstroem23Direct}. The same asymptotic estimates \eqref{eq:liminfAndLimsupOfHighestWaveForWhithamEquation} from \cite{Ehrnstroem19Whithams} apply equally well for these. Furthermore, there is also an innovative computer-assisted proof \cite{Enciso18Convexity}, where a highest periodic wave satisfying the limiting behaviour \eqref{eq:WhithamBehavior} is constructed. A form of local uniqueness, and the convexity of this highest wave is also obtained. The idea is to build an approximate ansatz for the solution using special functions, sufficiently good for a fixed-point argument to go through. A very large number of terms is required, as the map involved is just barely a contraction. A recent paper in the same direction for the Burgers--Hilbert equation is \cite{Dahne23Highest}.

    As for what concerns the bidirectional Whitham equation \eqref{eq:bidirectionalWhitham}, it was shown in \cite{Ehrnstroem19Existence} that there exists a highest periodic wave $\varphi$ with a corresponding $v$ that satisfies
    \[
        \limsup_{x \to 0}{\frac{(1-1/\sqrt{3})c-v(x)}{\abs{x}\log(1/\abs{x})}} < \infty.
    \]
    The corresponding lower bound is stated, but a flaw in one of the preceding lemmas hinders a correct estimate. This is due to slightly subtle estimates where logarithmic factors are easily lost, making the proof more delicate than for the unidirectional Whitham equation.

    The main purpose of this paper is to provide an analytic, and relatively transparent, argument establishing both the limit \eqref{eq:WhithamBehavior} for the Whitham equation, and the analogous result
    \begin{equation}
        \label{eq:bidirectionalWhithamBehavior}
        \frac{c^2}{3}-\varphi(x) = \parn*{\frac{1}{3\pi}+o(1)}\abs{x}\log(1/\abs{x})
    \end{equation}
    as $x \to 0$, for the bidirectional Whitham equation. These results will follow from a somewhat more general method for calculating the local behaviour of solutions to two classes of nonlocal equations on the half-line. As the proofs are quite technical, we first provide some background; describing how these nonlinear waves are related to the more general formulation and results found in \Cref{sec:setup,sec:logarithmicKernel,sec:homogeneousKernel}.

\section{Background and overview}\label{sec:overview}
    The Whitham equation \eqref{eq:unidirectionalWhitham} is a prototypical example of a more general family of nonlocal, nonlinear shallow-water wave models of form
    \begin{equation}
        \label{eq:generalequation}
        \partial_t \phi + \partial_x \parn*{K * \phi + N(\phi)} = 0,
    \end{equation}
    where $K \in L^1(\R)$ is an even, positive integral kernel that is convex on $\R^+ \ceq (0,\infty)$. Generally, this kernel will arise from a Fourier multiplier symbol $\hat{K}(\xi)$ of negative order. We shall here consider the orders $-1$ and $-1/2$, appearing in the bi- and unidirectional gravity water wave problems, respectively \cite{Lannes13Water}. The decay and smoothness of the symbol is realised as a corresponding singularity at the origin of an otherwise smooth kernel $K$ of exponential decay; see \cite{Taylor11Partial,Grafakos14Modern}.

    Seeking steady solutions $\phi(t,x) = \varphi(x-ct)$ to \eqref{eq:generalequation}, one arrives at
    \begin{equation}
        \label{eq:generalEquationSteady}
        K * \varphi = f(\varphi) + A, \quad \text{where} \quad f(t) \ceq ct - N(t),
    \end{equation}
    for some constant $A \in \R$ after integration. Under quite general conditions, equations like \eqref{eq:generalequation} have only symmetric solitary waves of elevation \cite{Bruell17Symmetry,Arnesen22Decay}. A similar statement is true for steady periodic waves under an additional reflection assumption \cite{Bruell23Symmetry}. This property is inherited via the maximum principle for the elliptic convolution operator. Naturally, this leads to the study of a possible maximal height $\varphi(0)$ for solutions of \eqref{eq:generalEquationSteady}.

    Supposing now that $f$ is increasing to the left of a nondegenerate local maximum at $t = \gamma$, and is sufficiently smooth, we can write
    \[
        f(\gamma)-f(t) = \parn*{-\frac{1}{2}f''(\gamma) + g(\gamma-t)}(\gamma-t)^2
    \]
    with $g(0)=0$. Thus, if $\varphi$ is a solution to \eqref{eq:generalEquationSteady} that achieves $\varphi(0) = \gamma$ from below, then $u = \gamma - \varphi$ is a nonnegative solution to
    \[
        K * u - (K * u)(0) = \parn*{-\frac{1}{2}f''(\gamma) + g(u(x))}u(x)^2,
    \]
    vanishing at the origin.

    Motivated by this computation, we therefore consider the condensed equation
    \begin{equation}
        \label{eq:condensedFormulation}
        (1+n(u(x)))u(x)^2 = \int_\R (K(y-x) - K(y))u(y)\dd y,
    \end{equation}
    where $K$ again has the properties as described after \eqref{eq:generalequation}, and $n(0)=0$. We see that any pointwise solution will necessarily have to satisfy $u(0)=0$. Equations similar to the one in \eqref{eq:condensedFormulation} also appear in a plethora of other contexts: examples include harmonic, functional and stochastic analysis.

    Finally, note further that \eqref{eq:condensedFormulation} is equivalent to the equation
    \begin{equation}
        \label{eq:condensedFormulationEven}
        (1+n(u(x)))u(x)^2 = \int_0^\infty \delta_x^2 K(y)u(y)\dd y
    \end{equation}
    for even functions, where we have conveniently recognised the second-order central difference
    \begin{equation}
        \label{eq:secondDifference}
        \delta_x^2 K(y) \ceq K(y+x)+K(y-x)-2K(y)
    \end{equation}
    in the integrand. Whereas the first-order difference in \eqref{eq:condensedFormulation} is very useful when one wants to establish global estimates for $u$, \eqref{eq:condensedFormulationEven} is able to take direct advantage of the convexity of $K$. It is therefore especially well adapted for studying $u$ precisely at $x = 0$.

    We will consider \eqref{eq:condensedFormulationEven} under general assumptions, but first \emph{formally} outline the theory below for kernels capturing the same singular behaviour. The exact assumptions and rigorous statements follow in \Cref{sec:setup}.

    \subsection*{Homogeneous singularity (Whitham)}
        If we replace $K$ in \eqref{eq:condensedFormulationEven} with the homogeneous, but merely locally integrable
        \begin{equation}
            \label{eq:homogeneousSingularity}
            H_s(x) \ceq \abs{x}^{s-1}
        \end{equation}
        for $s \in (0,1)$, and let $n = 0$, we obtain the toy equation
        \begin{equation}
            \label{eq:homogeneousToyEquation}
            u(x)^2 = \int_0^\infty \delta_x^2 H_s(y) u(y)\dd y,
        \end{equation}
        which in fact has an \emph{explicit} unbounded solution. It is convenient to introduce
        \[
            \Phi_s(\tau) \ceq \delta_1^2 H_s(\tau)=\abs{\tau+1}^{s-1}+\abs{\tau-1}^{s-1}-2\abs{\tau}^{s-1},
        \]
        for then the second difference in \eqref{eq:homogeneousToyEquation} satisfies
        \begin{equation}
            \label{eq:secondDifferenceHomogeneity}
            \delta_x^2 H_s(\tau x) = H_s(x)\Phi_s(\tau).
        \end{equation}

        \begin{lemma}
            \label{lem:toyEquation}
            The toy equation \eqref{eq:homogeneousToyEquation} has
            \[
                u(x) = \beta_s\abs{x}^s, \qquad \beta_s \ceq \frac{1}{2}B(s,s)
            \]
            as a solution for every $s \in (0,1)$, where $B$ denotes the beta function.
        \end{lemma}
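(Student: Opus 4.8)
The plan is to reduce the assertion to a single scalar identity for $\beta_s$, and then to evaluate the resulting integral via the Mellin transform. Since both sides of the toy equation \eqref{eq:homogeneousToyEquation} are even in $x$ and vanish at the origin, it suffices to treat $x > 0$. Writing $u(x) = \beta_s\abs{x}^s$, substituting $y = x\tau$ in the integral, and invoking the homogeneity \eqref{eq:secondDifferenceHomogeneity} together with $u(x\tau) = \beta_s x^s\tau^s$, the right-hand side of \eqref{eq:homogeneousToyEquation} becomes
\[
\int_0^\infty \delta_x^2 H_s(x\tau)\,\beta_s x^s\tau^s\,x \dd\tau = \beta_s\,H_s(x)\,x^{s+1}\int_0^\infty \tau^s\Phi_s(\tau)\dd\tau = \beta_s\,x^{2s}\int_0^\infty \tau^s\Phi_s(\tau)\dd\tau ,
\]
whereas the left-hand side is $u(x)^2 = \beta_s^2 x^{2s}$. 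As $\beta_s \neq 0$, the function $u$ solves \eqref{eq:homogeneousToyEquation} precisely when $\beta_s = \int_0^\infty \tau^s\Phi_s(\tau)\dd\tau$, so everything comes down to evaluating this integral.

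For $s \in (0,1)$ the integral converges absolutely: near $\tau = 0$ the integrand is comparable to $-2\tau^{2s-1}$, the singular part of $\Phi_s$ there being $-2\abs{\tau}^{s-1}$; near $\tau = 1$ it is comparable to $\abs{\tau-1}^{s-1}$; and at infinity to $\tau^{2s-3}$, in view of the expansion $\Phi_s(\tau) = (s-1)(s-2)\tau^{s-3} + O(\tau^{s-5})$. Moreover it depends continuously on $s$. I would evaluate it as the value at $z = s+1$ of the Mellin transform $\int_0^\infty \Phi_s(\tau)\tau^{z-1}\dd\tau$, by splitting $\Phi_s$ into the three powers $\abs{\tau+1}^{s-1}$, $\abs{\tau-1}^{s-1}$, $-2\abs{\tau}^{s-1}$ and cutting every resulting integral at $\tau = 1$. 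The six half-line integrals so obtained converge on half-planes in $z$ and continue meromorphically; the two scale-invariant ones, $\int_0^1 \tau^{s+z-2}\dd\tau = \tfrac{1}{s+z-1}$ and $\int_1^\infty \tau^{s+z-2}\dd\tau = -\tfrac{1}{s+z-1}$, cancel in the sum, and recombining the remaining four into Beta integrals produces the meromorphic identity
\[
\int_0^\infty \Phi_s(\tau)\tau^{z-1}\dd\tau = B(z,\,1-s-z) + B(z,\,s) + B(1-s-z,\,s),
\]
valid at $z = s+1$ for every $s \in (0,1)\setminus\{\tfrac12\}$.

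At $z = s+1$ the middle term is $B(s+1,s) = \frac{\Gamma(s+1)\Gamma(s)}{\Gamma(2s+1)} = \frac{\Gamma(s)^2}{2\Gamma(2s)} = \tfrac12 B(s,s) = \beta_s$, while the two outer terms cancel,
\[
B(s+1,-2s) + B(-2s,s) = \Gamma(-2s)\parn*{\frac{\Gamma(s+1)}{\Gamma(1-s)} + \frac{\Gamma(s)}{\Gamma(-s)}} = 0 ,
\]
the bracket vanishing because $\Gamma(s+1) = s\Gamma(s)$ and $\Gamma(1-s) = -s\Gamma(-s)$. Hence $\int_0^\infty \tau^s\Phi_s(\tau)\dd\tau = \beta_s$ for $s \neq \tfrac12$, and the remaining value follows because both sides are continuous in $s$ on $(0,1)$. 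I expect the main obstacle to be exactly the evaluation of this divergent-looking integral: the scale-invariant term $\int_0^\infty \tau^{2s-1}\dd\tau$ is meaningless on its own, so the passage to Beta functions must be organised so as to be genuinely justified — cutting at $\tau = 1$, using the meromorphic continuations of the half-line pieces, and verifying that the scale-invariant contributions really do cancel — after which there remains the Gamma-function bookkeeping that collapses the three Beta terms to the single $\beta_s$.
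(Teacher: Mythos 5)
Your proof is correct, but it takes a genuinely different route from the paper. Both reduce the lemma to the scalar identity $\beta_s = \int_0^\infty \Phi_s(\tau)\tau^s\dd\tau$, and both ultimately produce $\beta_s$ from the Beta integral $B(s+1,s) = \frac{1}{2}B(s,s)$. The paper proceeds for $s \in (0,1/2)$ by an elementary rearrangement: the divergent powers are paired off into the two \emph{convergent} integrals $I_1 = \int_0^\infty ((1+\tau)^{s-1} - \tau^{s-1})\tau^s\dd\tau$ and $I_2 = \int_1^\infty ((\tau-1)^{s-1} - \tau^{s-1})\tau^s\dd\tau$, which after a substitution and an integration by parts sum to $1/(2s)$ and cancel the leftover $\int_0^1\tau^{2s-1}\dd\tau$; the range $s\geq 1/2$ is then reached by analytic continuation in $s$ (the restriction $s<1/2$ is what makes $I_1$ and $I_2$ converge at infinity). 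You instead treat the whole thing as a Mellin transform: split $\Phi_s$ into its three power terms, cut each half-line integral at $\tau=1$, let the scale-invariant pieces cancel, and recombine the rest into the three Beta functions $B(z,1-s-z)+B(z,s)+B(1-s-z,s)$; at $z=s+1$ the outer two cancel via the reflection relations $\Gamma(s+1)=s\Gamma(s)$, $\Gamma(1-s)=-s\Gamma(-s)$, leaving $\beta_s$, and $s=1/2$ is recovered by continuity. Your route is more systematic — it hands the divergence bookkeeping to the Gamma-function calculus rather than to an ad hoc pairing — at the cost of invoking meromorphic continuation of the six half-line integrals, a standard but not entirely elementary step that would need a sentence or two of justification (the pieces converge on pairwise-incompatible half-planes in $z$, so the splitting must be read through their continuations rather than literally). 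The paper's proof is more self-contained and elementary on $(0,1/2)$, but defers the full range to a one-line appeal to analytic continuation in $s$; in practice the two arguments expend roughly the same amount of unstated analytic-continuation machinery, just along different variables.
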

        \begin{proof}
            This is an immediate consequence of the identity
            \begin{equation}
                \label{eq:betaIntegralIdentity}
                \beta_s = \int_0^\infty \Phi_s(\tau)\tau^s\dd \tau,
            \end{equation}
            which can most easily be seen for $s \in (0,1/2)$ by splitting the integral according to
            \begin{multline*}
                \int_0^\infty \Phi_s(\tau)\tau^s\dd \tau = \underbrace{\int_0^1 (1-\tau)^{s-1}\tau^s\dd\tau}_{\beta_s} - \underbrace{\int_0^1 \tau^{2s-1}\dd \tau}_{1/(2s)}\\+ \underbrace{\int_0^\infty \parn[\big]{(1+\tau)^{s-1}-\tau^{s-1}}\tau^s\dd\tau}_{I_1} + \underbrace{\int_1^\infty \parn[\big]{(\tau-1)^{s-1}-\tau^{s-1}}\tau^s\dd\tau,}_{I_2}
            \end{multline*}
            where all but the first term will cancel.

            Indeed, observe that
            \[
                I_2 = \frac{1}{s} - \int_0^\infty \parn[\big]{\tau^s - (\tau + 1)^s}(\tau+1)^{s-1}\dd\tau
            \]
            through the change of variables $\tau \mapsto (\tau + 1)$ and integration by parts. It follows that
            \[
                I_1 + I_2 = \frac{1}{s} + \int_0^\infty \parn[\big]{(\tau+1)^{2s-1}-\tau^{2s-1}}\dd\tau = \frac{1}{2s},
            \]
            whence \eqref{eq:betaIntegralIdentity} holds. Finally, analytic continuation yields \eqref{eq:betaIntegralIdentity} also for $s \in [1/2,1)$.
        \end{proof}

        In particular, it is reasonable to expect that well-behaved solutions to \eqref{eq:condensedFormulationEven} should still satisfy
        \begin{equation}
            \label{eq:homogeneousCaseLimit}
            \lim_{x \to 0} \frac{u(x)}{\abs{x}^{1/2}} = \beta_{1/2} =  \frac{\pi}{2}
        \end{equation}
        when $K$ behaves like $H_{1/2}$ near the origin; which is the case for a scaled version of the Whitham-kernel $K_W$. Under mild conditions, equations such as \eqref{eq:condensedFormulation} have the feature that solutions are smooth away from where they vanish. This comes from a general ``off-diagonal'' convolution property for pseudo-differential operators \cite{Taylor11Partial}, and can be seen as in \cite{Ehrnstroem19Whithams}.

        The behaviour of a solution in the vicinity of the origin arises from a balancing act between the square on the left-hand side, and the asymptotics of the second difference \eqref{eq:secondDifference} as $x \to 0$. As the square root is not regular, one consequently faces an upper threshold on the regularity of $u$. Simplifying to \eqref{eq:homogeneousToyEquation}, an essential part of the argument in \cite{Ehrnstroem19Whithams} relies on first bootstrapping global $C^{1/2-}$-regularity, and then noting that
        \begin{align}
            \parn*{\frac{u(x)}{\abs{x}^\alpha}}^2 & = \abs{x}^{1/2-\alpha}\int_0^\infty \Phi(\tau)\tau^\alpha \frac{u(\tau x)}{\abs{\tau x}^\alpha}\dd\tau \label{eq:toyEquationSubstitution} \\
                                                  & \leq \abs{x}^{1/2-\alpha} \sup_{y \in \R}{\frac{u(y)}{\abs{y}^{\alpha}}} \int_0^\infty \abs{\Phi(\tau)}\tau^\alpha \dd \tau \notag
        \end{align}
        for all $\alpha \in (0,1/2)$ and $x \neq 0$, where
        \begin{equation}
            \label{eq:homogeneousPhiDefinition}
            \Phi(\tau) \ceq \Phi_{1/2}(\tau) = \frac{1}{\abs{1+\tau}^{1/2}} + \frac{1}{\abs{1-\tau}^{1/2}}-\frac{2}{\abs{\tau}^{1/2}}.
        \end{equation}
        If we now, for the sake of argument, \emph{assume} that the supremum of the left-hand side in \eqref{eq:toyEquationSubstitution} is always achieved for $\abs{x} \leq 1$, then we obtain
        \begin{equation}
            \label{eq:formalUpperBound}
            \sup_{x\in \R}{\frac{u(x)}{\abs{x}^{\alpha}}} \leq \int_0^\infty \abs{\Phi(\tau)}\tau^\alpha \dd \tau,
        \end{equation}
        whereupon we can let $\alpha \to 1/2$.

        A curious thing about this calculation is that if $\Phi$ had been non-negative, then \eqref{eq:formalUpperBound} would have immediately yielded
        \[
            u(x) \leq \frac{\pi}{2}\abs{x}^{1/2}
        \]
        by \eqref{eq:betaIntegralIdentity}, which would be optimal. Similarly, if one knew that the limit of $u(x)/\abs{x}^{1/2}$ existed as $x \to 0$, one could have chosen $\alpha = 1/2$ in \eqref{eq:toyEquationSubstitution} and let $x \to 0$ to find \eqref{eq:homogeneousCaseLimit} by dominated convergence. In reality, however, $\Phi$ changes from negative to positive at a point $\tau_0 \in (0,1)$, as seen in \Cref{fig:PhiGraph}; and the existence of a limit is exactly what is difficult to show.

        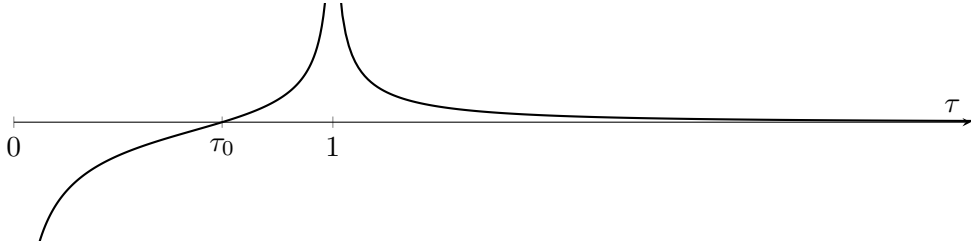
\begin{figure}[htb]
            \centering
            \tikzsetnextfilename{homogeneous_phi}
            \begin{tikzpicture}[trim axis left, trim axis right]
    \begin{axis}[xlabel={$\tau$}, ylabel ={$\Phi(\tau)$},xmin=0,xmax=3,xtick={0,0.6529,1},xticklabels={0,$\tau_0$,1},ymin=-5,ymax=5,ytick=\empty,width=0.9\textwidth,height=0.3\textwidth,axis x line = middle,axis y line=none]
        \addplot[mark=none,thick,black,domain=0.01:0.99,samples=200] {(1+x)^(-1/2)+(1-x)^(-1/2)-2*x^(-1/2)};
        \addplot[mark=none,thick,black,domain=1.01:5,samples=400] {(1+x)^(-1/2)+(x-1)^(-1/2)-2*x^(-1/2)};
    \end{axis}
\end{tikzpicture}
            \caption{The graph of $\Phi$.}
            \label{fig:PhiGraph}
        \end{figure}

        To establish \eqref{eq:homogeneousCaseLimit}, we therefore identify in \eqref{eq:betaIntegralIdentity} the significance of the points $\tau \in \brac{\tau_0,1}$, and write \eqref{eq:condensedFormulationEven} as
        \begin{align}
            \parn*{\frac{u(x)}{x^{1/2}}}^2(1+n(u(x))) & = \frac{1}{\abs{x}} \parn*{\int_0^{\tau_0 x} + \int_{\tau_0 x}^x + \int_x^\nu + \int_\nu^\infty} \delta_x^2 K(y)u(y)\dd y \notag
            \intertext{for $0 < x < \nu$, or, in essence,}
            \parn*{\frac{u(x)}{x^{1/2}}}^2            & \approx \parn*{\int_0^{\tau_0} + \int_{\tau_0}^1 + \int_1^{\nu/x} + \int_{\nu/x}^\infty} \Phi(\tau)\tau^{1/2}\frac{u(\tau x)}{(\tau x)^{1/2}} \dd \tau \label{eq:keyApproximateEquality}
        \end{align}
        under appropriate assumptions. The constant $\nu > 0$ is used to single out a small interval where $u$ has desirable properties, but can otherwise be made arbitrarily small. Its exact value is therefore not important to the theory. Because $\nu / x \to \infty$ as $x \searrow 0$, the last integral will vanish in the limit.

        The remaining integrals are less straightforward, and the main obstacle in their treatment is the limited information about monotonicity or the existence of the limit. Our trick here is to consider sequences realising
        \[
            m \ceq \liminf_{x \searrow 0}{\frac{u(x)}{x^{1/2}}} \quad \text{or} \quad M \ceq \limsup_{x \searrow 0}{\frac{u(x)}{x^{1/2}}},
        \]
        in a strategic manner. As $\Phi$ changes signs at $\tau_0$, we are thereby able to make the estimates
        \begin{equation}
            \label{eq:symmetric_inequalities}
            \begin{aligned}
                M^2 & \leq m\int_0^{\tau_0} \Phi(\tau)\tau^{1/2}\dd \tau + M \int_{\tau_0}^\infty \Phi(\tau)\tau^{1/2}\dd \tau,     \\
                m^2 & \geq  M \int_0^{\tau_0} \Phi(\tau) \tau^{1/2} \dd \tau + m\int_{\tau_0}^\infty \Phi(\tau) \tau^{1/2}\dd \tau,
            \end{aligned}
        \end{equation}
        by taking limits in \eqref{eq:keyApproximateEquality}.

        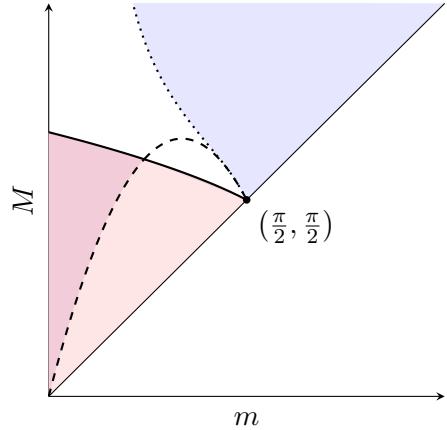
\begin{figure}[htb]
            \centering
            \tikzsetnextfilename{inequalities}
            \begin{tikzpicture}[trim axis left, trim axis right]
    \begin{axis}[xlabel={$m$}, ylabel ={$M$},xmin=0,xmax=pi,xtick=\empty,ymin=0,ymax=pi,ytick=\empty,width=0.5\textwidth,axis equal image,axis x line=bottom,axis y line=left]
        \pgfmathsetmacro{\firstint}{0.541146}
        \pgfmathsetmacro{\secondint}{2.11194}
        \pgfmathsetmacro{\mintersect}{0.757324}
        \addplot[name path=upperbound,mark=none,domain=0:pi/2,thick,black] {\secondint/2+(\secondint^2/4-x*\firstint)^(1/2)};
        \addplot[name path=lowerbound_right,mark=none,domain=\mintersect:pi/2,thick,black,dashed] {1/\firstint*x*(\secondint-x)};
        \addplot[name path=lowerbound_left,mark=none,domain=0:\mintersect,thick,black,dashed] {1/\firstint*x*(\secondint-x)};
        \addplot[name path=lowerbound_improved,mark=none,thick,black,dotted] table[col sep=comma, row sep=newline] {graphics/lowerbound.txt};
        \addplot[name path=xleft,mark=none, domain=0:pi/2] {x};
        \addplot[name path=xright,mark=none, domain=pi/2:pi] {x};
        \fill[black] (axis cs:pi/2,pi/2) circle (0.05 cm) node[below right] {$\parn*{\frac{\pi}{2},\frac{\pi}{2}}$};
        \addplot[fill = red!10] fill between [of=upperbound and xleft];
        \addplot[fill=purple!20] fill between[of=lowerbound_left and upperbound];
        \addplot[fill = blue!10] fill between [of=lowerbound_improved and xright,reverse=true];
    \end{axis}
\end{tikzpicture}
            \caption{The inequalities in \eqref{eq:symmetric_inequalities} are satisfied by the points below the solid curve, and above the dashed curve, respectively. The refined version of the second inequality corresponds to the dotted curve.}
            \label{fig:inequalities}
        \end{figure}

        This system of inequalities will have solutions described by \Cref{fig:inequalities}. In addition to the expected solution, which is isolated, there is also a wedge-like set of unwanted solutions for which $M > m$. A refinement is made to the second inequality of \eqref{eq:symmetric_inequalities} to exclude this area, yielding the desired conclusion that $m=M=\pi/2$. The shape of the curves in \Cref{fig:inequalities} is naturally determined by integrals involving $\Phi_s$, but there is some leeway. Therefore, this method works essentially unmodified for a range of homogeneous singularities; not only when $s=1/2$. In fact, there is some $s_0 \approx 1/3$ such that it works for $s \in (s_0,1)$, but fails for $s \in (0,s_0)$. The reason why it breaks down is that the expected solution $m=M=\beta_s$ from \Cref{lem:toyEquation} stops being isolated, even with the refined inequality. A new idea would therefore be required to proceed past this value. Highest Hölder and Lipschitz waves have been constructed in a number of settings \cite{Afram21Steady,Oerke22Highest,Arnesen19Non,Le22Waves,Bruell21Waves,Hildrum23Periodic,Geyer19Linear}, and we expect a similar approach to go through for many such equations.

    \subsection*{Logarithmic singularity (Bidirectional Whitham)}
        For order $-1$, the singular behaviour of the kernel is instead captured by
        \begin{equation}
            \label{eq:logarithmicSingularity}
            L(x) \ceq \log\parn*{1/\abs{x}},
        \end{equation}
        so that homogeneity is replaced by additivity. One finds that
        \begin{equation}
            \label{eq:secondDifferenceAdditivity}
            \delta_x^2 L(\tau x) = -\log{\abs*{1-\frac{1}{\tau^2}}} \eqc \Lambda(\tau)
        \end{equation}
        differs substantially from \eqref{eq:secondDifferenceHomogeneity}, in that does not depend on $x$ at all. This leads to an entirely different set of estimates, and, in turn, changes the relative importance of the integrals appearing in governing equation.

        The qualitative behaviour of $\Lambda$ is still the same as $\Phi$ in \Cref{fig:PhiGraph}, but in the logarithmic case the contribution of the entire interval $(0,x)$ turns out to be negligible in the limit. In fact, the final estimate hinges only on an integral over $(x,\nu)$. Explicitly, writing \eqref{eq:condensedFormulationEven} as
        \[
            \parn*{\frac{u(x)}{x\log(1/x)}}^2(1+n(u(x))) = \frac{1}{x^2 \log(1/x)^2}\parn*{\int_0^x + \int_x^\nu + \int_\nu^\infty} \delta_x^2 K(y) u(y)\dd y,
        \]
        we see that the analogue of \eqref{eq:keyApproximateEquality} becomes
        \[
            \parn*{\frac{u(x)}{x\log(1/x)}}^2 \approx \parn*{\int_0^1 + \int_1^{\nu/x} + \int_{\nu/x}^\infty}\Lambda(\tau)\tau\frac{\log(1/(\tau x))}{\log(1/x)^2}\frac{u(\tau x)}{\tau x \log(1/(\tau x))} \dd \tau
        \]
        for $0 < x < \nu$.

        The first integral is killed in the limit when $u(x)/(x\log(1/x))$ is bounded, and the third integral is still negligible as before. The limit ``should'' therefore be
        \begin{equation}
            \label{eq:logCaseLimit}
            \lim_{x \to 0}{\frac{u(x)}{\abs{x}\log(1/\abs{x})}} = \lim_{x \searrow 0}{\int_1^{\nu / x} \Lambda(\tau) \tau \frac{\log(1/(\tau x))}{\log(1/x)^2} \dd \tau} = \frac{1}{2}
        \end{equation}
        in this case. Contrary to what we saw for a homogeneous singularity, it is possible to obtain \eqref{eq:logCaseLimit} directly using the aforementioned approach with sequences. There is no need to thereafter go through a system of inequalities.

\section{Setup}\label{sec:setup}
    We have seen that, after an appropriate change of variables, the highest waves of both \eqref{eq:unidirectionalWhitham} and \eqref{eq:bidirectionalWhitham} satisfy an equation of the form \eqref{eq:condensedFormulationEven}. The following assumptions are made on the objects involved, where $\R_0^+ \ceq \R^+ \cup \brac{0}$:
    \begin{assumption}
        \label{ass:nonlinearity}
        The nonlinearity $n\in C^1(\R_0^+)$ satisfies $n(0)=0$.
    \end{assumption}

    \begin{assumption}
        \label{ass:integrabilitySymmetryAndConvexity}
        The kernel $K \in L^1(\R)$ is even, positive, and convex on $\R^+$. Moreover, it admits a decomposition $K=S+R$, where the singular part $S$ is of the form
        \[
            L(x) = \log(1/\abs{x}) \quad \text{or} \quad H(x) = \abs{x}^{-1/2},
        \]
        and the regular part $R$ has a weak second derivative $R''\in L^1(\R)$.
    \end{assumption}
    \begin{assumption}
        \label{ass:solutionU}
        The solution $u\in C(\R_0^+)$ is bounded, nonnegative, satisfies $u(0)=0$, and is for sufficiently small $x>0$ both continuously differentiable and increasing.
    \end{assumption}
    The regularity of $n$ in \Cref{ass:nonlinearity} is only needed for the limit of the derivative in our main result, not for the behaviour of $u(x)$ itself. One similarly would need to demand higher regularity of $n$ in order to prove asymptotics for higher order derivatives of $u$. The assumption of continuous differentiability of $u$ close to the origin in \Cref{ass:solutionU} is in fact redundant under the other properties, see \Cref{sec:homogeneousKernel}. All of the assumptions may be further weakened, but as added generality would come at the expense of clarity, we shall not push this question further here.

    Our main result is the following.
    \begin{main-theorem}
    \phantomsection\label{thm:mainTheorem} 
    Suppose that the above \Cref{ass:nonlinearity,ass:integrabilitySymmetryAndConvexity,ass:solutionU}  hold. If the singular part of $K$ takes the logarithmic form $L(x)=\log(1/\abs{x})$, then the solution $u$ admits the limits
    \begin{alignat*}{3}
        \lim_{x\to 0}\frac{u(x)}{x\log (1/x)} & = \frac{1}{2} \quad   &  & \text{and} & \quad\lim_{x\to 0}\frac{u'(x)}{\log (1/x)} & =\frac{1}{2},
        \intertext{while if it takes the homogeneous form $H(x)=|x|^{-1/2}$, then $u$ admits the limits}
        \lim_{x\to 0}\frac{u(x)}{x^{1/2}}     & = \frac{\pi}{2} \quad &  & \text{and} & \quad \lim_{x\to 0} \frac{u'(x)}{x^{-1/2}} & =\frac{\pi}{4}.
    \end{alignat*}
    \end{main-theorem}

    This theorem combines \Cref{prop:logarithmicAsymptoticBehavior,prop:logarithmicDerivativeAsymptoticBehavior,prop:homogeneousAsymptoticBehavior,prop:homogeneousderivative}, of which the first two are proved in \Cref{sec:logarithmicKernel} and the latter two in \Cref{sec:homogeneousKernel}. These results are in turn employed in \Cref{sec:applicationForTheHighestWavesOfBidirectionalWhithamEquation} to both establish the limit \eqref{eq:bidirectionalWhithamBehavior} and global regularity for the bidirectional highest waves obtained in \cite{Ehrnstroem19Existence}, and in \Cref{sec:applicationForHighestWavesOfUnidirectionalWhithamEquations} to prove the limit \eqref{eq:WhithamBehavior} for the unidirectional highest waves obtained in \cite{Ehrnstroem19Whithams,Truong22Global}. Furthermore, immediately preceding \Cref{sec:applicationForHighestWavesOfUnidirectionalWhithamEquations}, we outline how one would determine the asymptotic behaviour of derivatives to any order.

    For reference, we include the following corollary, which lists the implied asymptotic behaviour for the highest waves in the Whitham equations. The precise details are, as explained above, presented in \Cref{sec:applicationForTheHighestWavesOfBidirectionalWhithamEquation,sec:applicationForHighestWavesOfUnidirectionalWhithamEquations}.

    \begin{corollary}[Whitham equations, abridged]
        Let $\varphi$ denote the surface profile of a highest wave, with a peak at zero, of the Whitham equation. Then
        \begin{align*}
            \lim_{x\to 0}\frac{\varphi(0)-\varphi(x)}{|x|^{1/2}} = \sqrt{\frac{\pi}{8}}.
        \end{align*}
        The corresponding limit for a highest wave in the bidirectional Whitham equation is
        \begin{align*}
            \lim_{x\to 0}\frac{\varphi(0)-\varphi(x)}{|x|\log(1/|x|)} = \frac{1}{3\pi}.
        \end{align*}
    \end{corollary}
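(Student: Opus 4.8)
The plan is to recognise that, for each equation, the highest wave gives rise---after passing to the deficit from the crest and, in the bidirectional case, eliminating $\varphi$ from the system---to a solution of the condensed equation \eqref{eq:condensedFormulation}, and then to invoke the Main Theorem once a scaling has normalised the coefficient of the quadratic term to $1$ and the singular part of the kernel to exactly $H$ or $L$. With the Main Theorem in hand, only the determination of these scaling constants remains.

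\emph{Unidirectional Whitham.} Here \eqref{eq:generalEquationSteady} reads $K_W*\varphi = c\varphi - \varphi^2 + A$, so $f(t) = ct - t^2$ is a downward parabola with a nondegenerate maximum at $\gamma = c/2$ and $f(\gamma)-f(t) = \parn*{\gamma-t}^2$ exactly; hence $u \ceq c/2 - \varphi$ solves \eqref{eq:condensedFormulation} with $n\equiv 0$. I would then use $\hat K_W(\xi) = \sqrt{\tanh(\xi)/\xi} = \abs{\xi}^{-1/2}\parn[\big]{1+O(e^{-2\abs{\xi}})}$ together with $\widehat H(\xi) = \sqrt{2\pi}\,\abs{\xi}^{-1/2}$ to see that $\sqrt{2\pi}\,K_W$ has singular part exactly $H$; the exponential decay of $\widehat{\sqrt{2\pi}K_W} - \widehat H$ and the behaviour of the difference near the origin yield $R'' \in L^1(\R)$ for the regular part, while positivity and convexity of $K_W$ on $\R^+$ are classical \cite{Ehrnstroem19Whithams}. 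Thus $W \ceq \sqrt{2\pi}\,\parn*{c/2-\varphi}$ solves \eqref{eq:condensedFormulation} with kernel $\sqrt{2\pi}\,K_W$; it is bounded, nonnegative, vanishes at the origin, and---since the highest wave is monotone on either side of its crest and smooth away from it---is $C^1$ and increasing for small $x>0$. The Main Theorem then gives $W(x)/\abs{x}^{1/2} \to \pi/2$, i.e.\ $\parn*{c/2-\varphi(x)}/\abs{x}^{1/2} \to \pi/\parn*{2\sqrt{2\pi}} = \sqrt{\pi/8}$, which is \eqref{eq:WhithamBehavior}.

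\emph{Bidirectional Whitham.} Integrating the steady version of \eqref{eq:bidirectionalWhitham} and substituting $\varphi = A_2 + cv - v^2/2$ from the second equation into the first yields a scalar equation $K_B*v = f(v)+A$ with $f$ the cubic $f(v) = \parn*{c^2-A_2}v - \tfrac{3c}{2}v^2 + \tfrac12 v^3$. The highest wave has $v(0) = \gamma = \parn*{1-1/\sqrt3}c$, a nondegenerate maximum of $f$ with $-\tfrac12 f''(\gamma) = \tfrac{\sqrt3 c}{2}$; hence $u \ceq \gamma - v$ solves \eqref{eq:condensedFormulation} for $K_B$ up to the constant $\tfrac{\sqrt3 c}{2}$ and a nonlinearity $n$ with $n(0)=0$. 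Absorbing that constant into the kernel and then rescaling the spatial variable---using $\hat K_B(\xi) = \tanh(\xi)/\xi$, the fact that $\widehat L(\xi) = \pi\abs{\xi}^{-1}$ up to a term supported at the origin (which is annihilated by the differences appearing in \eqref{eq:condensedFormulation}), and once more $R'' \in L^1(\R)$ for the regular part, alongside the known positivity and convexity of $K_B$ \cite{Ehrnstroem19Existence}---I would apply the Main Theorem to obtain $u(x)/\parn*{x\log(1/x)} \to 1/\parn*{\sqrt3\,\pi c}$. Finally, from $\varphi(0)-\varphi(x) = \parn*{v(0)-v(x)}\parn[\big]{c - \tfrac12\parn*{v(0)+v(x)}}$ and $v(x)\to\gamma$ one picks up the extra factor $c-\gamma = c/\sqrt3$, so $\parn*{\varphi(0)-\varphi(x)}/\parn*{\abs{x}\log(1/\abs{x})} \to \parn*{c/\sqrt3}\cdot 1/\parn*{\sqrt3\,\pi c} = 1/(3\pi)$, which is \eqref{eq:bidirectionalWhithamBehavior}.

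The difficulty is not the application of the Main Theorem---whose proof already absorbs the delicate part, in particular the logarithmic estimates, where as noted logarithmic factors are easily mislaid---but the verifications that surround it: establishing $R''\in L^1(\R)$ for the regular parts of $K_W$ and $K_B$, which amounts to controlling the difference between each kernel and its leading singularity; carrying the normalising constants $\sqrt{2\pi}$ and $\pi$ faithfully through the two rescalings so as to land exactly on $\sqrt{\pi/8}$ and $1/(3\pi)$; and, in the bidirectional case, correctly reducing the system to the scalar equation and tracking the near-crest relation between $\varphi$ and $v$. I expect the $L^1$-control of $R''$ for the bidirectional kernel to be the most delicate single point, since that is precisely where the logarithmic singularity of $K_B$ must be separated cleanly from the smooth remainder.
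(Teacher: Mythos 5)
Your proposal follows the paper's route essentially exactly: pass to the deficit $u=\gamma-\varphi$ (resp. $u=\gamma-v$), normalise by an amplitude scaling so that the singular part of the kernel is exactly $H$ (resp. $L$) and the quadratic coefficient is $1$, verify \Cref{ass:nonlinearity,ass:integrabilitySymmetryAndConvexity,ass:solutionU}, invoke the \hyperref[thm:mainTheorem]{Main Theorem}, and in the bidirectional case recover $\varphi$ from $\varphi=cv-v^2/2$ to pick up the extra factor $c-\gamma=c/\sqrt{3}$; your constants all check out. The one point where you deviate is cosmetic: the paper verifies $R''\in L^1$ not via Fourier-side tail estimates but via the closed form $K_B(x)=\tfrac{1}{\pi}\log\coth(\pi|x|/4)$ and, for $K_W$, prior results together with the series expansion of \Cref{rem:the series expansion}; also, no spatial rescaling is needed (nor performed in the paper), since the single amplitude scaling $u=\tfrac{\sqrt{3}\pi c}{2}(\gamma-v)$ already lands on the kernel $\pi K_B$ with singular part exactly $L$.
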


    \subsection{Preliminaries}
        We will here list a few useful properties of the kernel $K=S+R$ that follow from \Cref{ass:integrabilitySymmetryAndConvexity}. The first lemma shows that the tail of $\delta_x^2K$ is both nonnegative and small, which will later ensure that it may be disregarded when analysing the local behaviour of $u$ near the origin. Introducing the antiderivative
        \begin{equation}
            \label{eq:K_antiderivative}
            \mc{K}(x) \ceq \int_0^x K(y)\dd y
        \end{equation}
        for the kernel will occasionally be useful.

        \begin{lemma}
            \label{lem:secondDifferenceProperties}
            The second difference $\delta_x^2 K$ is nonnegative on $(x,\infty)$, and satisfies
            \[
                0\leq \int_\nu^\infty \delta_x^2 K(y)\dd y \leq -K'(\nu-x)x^2,
            \]
            for any $0\leq x<\nu$.
        \end{lemma}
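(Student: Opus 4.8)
The plan is to deduce everything from two elementary consequences of \Cref{ass:integrabilitySymmetryAndConvexity} that I would record at the outset. First, $K \in C^1(\R^+)$: the singular part $S$ is smooth away from the origin, and $R'' \in L^1(\R)$ forces $R'$ to coincide (a.e.) with a continuous function, so $K' = S' + R'$ is continuous on $\R^+$. Second, $K' \le 0$ and $K'$ is nondecreasing on $\R^+$: a positive, integrable, convex function on a half-line cannot have a positive derivative anywhere (a supporting line would then force linear growth, contradicting integrability), and monotonicity of $K'$ is just convexity. In particular $-K'(\nu - x) \ge 0$, so the right-hand side of the claimed inequality is nonnegative, consistently with the asserted lower bound.

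For the nonnegativity of $\delta_x^2 K$ on $(x, \infty)$, note that if $y > x \ge 0$ then $y - x$, $y$, $y + x$ all lie in $\R^+$ (the case $x = 0$ being trivial), and $y$ is the midpoint of $y - x$ and $y + x$; convexity of $K$ on $\R^+$ therefore gives $2K(y) \le K(y - x) + K(y + x)$, i.e. $\delta_x^2 K(y) \ge 0$. The lower bound $0 \le \int_\nu^\infty \delta_x^2 K(y)\dd y$ for $0 \le x < \nu$ is then immediate, since the integrand is nonnegative on $(\nu, \infty) \subset (x, \infty)$ and the integral is finite (each translate of $K$, and $K$ itself, lies in $L^1(\R)$).

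For the upper bound, the key step is to rewrite the tail integral. Setting $F(y) \ceq \int_y^\infty K(t)\dd t$ (equivalently $\tfrac12\norm{K}_{L^1(\R)} - \mc{K}(y)$), the substitutions $y \mapsto y \pm x$ convert $\int_\nu^\infty \delta_x^2 K(y)\dd y$ into $F(\nu + x) + F(\nu - x) - 2F(\nu) = \delta_x^2 F(\nu)$; then, using $F' = -K$ and one further shift, this equals $\int_{\nu - x}^\nu \parn{K(w) - K(w + x)}\dd w$. On $w \in [\nu - x, \nu]$ I would bound $K(w) - K(w + x) = -\int_w^{w+x} K'(t)\dd t \le -x\,K'(w) \le -x\,K'(\nu - x)$, where the first inequality uses that $-K'$ is nonincreasing on $\R^+$ (so $-K'(t) \le -K'(w)$ for $t \ge w$) and the second uses $w \ge \nu - x$. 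Integrating this over the interval $[\nu - x, \nu]$ of length $x$ produces exactly $-K'(\nu - x)\,x^2$.

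There is no substantial obstacle here beyond careful bookkeeping; the only two points that need a little attention are (i) legitimising the rearrangement of the tail integral — handled by splitting it into finitely many absolutely convergent pieces before recombining — and (ii) interpreting $K'(\nu - x)$ as a genuine derivative at the point $\nu - x > 0$, which is precisely why I would note $K \in C^1(\R^+)$ first. (Alternatively, one could phrase the whole argument with one-sided derivatives of the convex function $K$ and avoid invoking the decomposition $K = S + R$ altogether.)
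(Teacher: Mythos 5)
Your argument is correct and follows essentially the same route as the paper: both first identify $\int_\nu^\infty \delta_x^2 K(y)\,\dd y$ with a second difference of an antiderivative of $K$ (your $\delta_x^2 F(\nu)$ is the paper's $-\delta_x^2\mc{K}(\nu)$), then express this as an integral of $K'$ and bound it using that $-K'$ is nonincreasing on $\R^+$. The only cosmetic difference is that you use a one-dimensional integral $\int_{\nu-x}^\nu\bigl(K(w)-K(w+x)\bigr)\dd w$ where the paper writes the equivalent double integral $-\int_0^x\int_0^x K'(\nu+\tau_1-\tau_2)\dd\tau_1\dd\tau_2$; your preliminary remarks on $K\in C^1(\R^+)$ and $K'\le 0$ are correct but not strictly needed beyond the monotonicity of $K'$.
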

        \begin{proof}
            For $0 \leq x < y$, the convexity of $K$ on $\R^+$ immediately yields
            \[
                \delta_x^2K(y) = K(y+x)+K(y-x)- 2K(y) \geq 0,
            \]
            and by virtue of \eqref{eq:K_antiderivative}, we get
            \begin{align*}
                \int_\nu^\infty \delta_x^2 K(y) \dd y & = - \delta_x^2 \mc{K}(\nu)= - \int_0^x \int_0^x K'(\nu + \tau_1 - \tau_2) \\
                                                      & \leq - K'(\nu-x)x^2
            \end{align*}
            for all $0 \leq x < \nu$. Here we have used that $-K'$ is nonincreasing on $\R^+$.
        \end{proof}

        The following lemma likewise demonstrates that $\delta_x^2R$ is small, and so it too will have a negligible effect on the local behaviour of $u$.

        \begin{lemma}
            \label{lem:remainderMayBeDisregarded}
            The second difference $\delta_x^2 R$ is integrable and satisfies
            \[
                \norm{\delta_x^2 R}_{L^1} \leq x^2 \norm{R''}_{L^1}
            \]
            for all $x \in \R_0^+$. Moreover, $R'$ admits the bound $\norm{R'}_{L^\infty}\leq \norm{R''}_{L^1}$.
        \end{lemma}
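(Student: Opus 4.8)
The plan is to reduce both bounds to two applications of the fundamental theorem of calculus. Observe first that the hypothesis in \Cref{ass:integrabilitySymmetryAndConvexity} that $R$ possesses a weak second derivative $R'' \in L^1(\R)$ places $R$ in $W^{2,1}_{\mr{loc}}(\R)$, hence in $C^1(\R)$, with $R'$ locally absolutely continuous and $(R')' = R''$ almost everywhere.

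Starting from $R(y+x)-R(y) = \int_0^x R'(y+s)\dd s$ and $R(y)-R(y-x) = \int_0^x R'(y-x+s)\dd s$, subtraction yields $\delta_x^2 R(y) = \int_0^x \parn*{R'(y+s)-R'(y-x+s)}\dd s$, and a further application of the fundamental theorem of calculus to $R'(y+s)-R'(y-x+s) = \int_0^x R''(y-x+s+t)\dd t$ produces the pointwise identity
\[
    \delta_x^2 R(y) = \int_0^x \int_0^x R''(y-x+s+t) \dd t \dd s, \qquad x \in \R_0^+,\ y \in \R.
\]
Taking absolute values and invoking Tonelli's theorem, the translation invariance of Lebesgue measure then gives
\[
    \int_\R \abs{\delta_x^2 R(y)} \dd y \le \int_0^x \int_0^x \parn*{\int_\R \abs{R''(y-x+s+t)}\dd y} \dd t \dd s = x^2 \norm{R''}_{L^1},
\]
which simultaneously establishes the integrability of $\delta_x^2 R$ and the claimed estimate.

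For the bound on $R'$, I would exploit that $R = K - S$ is even, since $K$ is even by \Cref{ass:integrabilitySymmetryAndConvexity} and each of the admissible singular parts $L$ and $H$ is even. An even function in $C^1(\R)$ satisfies $R'(0) = 0$, so the absolute continuity of $R'$ gives $R'(y) = \int_0^y R''(r)\dd r$ for every $y \in \R$, whence $\abs{R'(y)} \le \int_0^{\abs{y}}\abs{R''(r)}\dd r \le \norm{R''}_{L^1}$, that is, $\norm{R'}_{L^\infty} \le \norm{R''}_{L^1}$.

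I do not anticipate a genuine obstacle in this lemma; the only point meriting care is the justification of the double-integral representation of $\delta_x^2 R$. This is precisely where the hypothesis pays off: the integrability of $R''$ together with the local regularity it forces suffices, and no decay or integrability hypotheses on $R$ or $R'$ are needed — which is fortunate, since neither is integrable in the logarithmic case.
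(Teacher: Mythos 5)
Your proposal is correct and follows essentially the same route as the paper: express $\delta_x^2 R$ as a double integral of $R''$ over $[0,x]^2$, then apply Tonelli and translation invariance for the $L^1$ bound, and use the evenness of $R=K-S$ to get $R'(0)=0$ (the paper phrases this as $R'$ being odd) so that $R'$ is the primitive of $R''$ from $0$. The only difference is your slightly different but equivalent parameterisation of the double integral and a bit more explicit Sobolev-regularity justification, which is entirely compatible with the paper's argument.
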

        \begin{proof}
            By \Cref{ass:integrabilitySymmetryAndConvexity}, $R''$ is integrable, and so
            \begin{align*}
                \norm{\delta_x^2 R}_{L^1}
                 & = \int_\R\abs[\bigg]{\int_{0}^x\int_0^xR''(y+t_1-t_2)\dd t_1\dd t_2}\dd y                      \\
                 & \leq \int_{0}^x\int_0^x\int_{\R}\abs{R''(y+t_1-t_2)}\dd y\dd t_1\dd t_2= x^2 \norm{R''}_{L^1}.
            \end{align*}
            for all $x \geq 0$. For the second part, we note that $R=K-S$ is necessarily even; and so $R'$ is odd. Since $R'$ is also absolutely continuous, we therefore conclude that $R'(x)= \int_0^xR''(y)\dd y$, which gives the desired bound.
        \end{proof}

\section{Logarithmic kernel}
    \label{sec:logarithmicKernel}
    In this section, we adopt \Cref{ass:integrabilitySymmetryAndConvexity,ass:nonlinearity,ass:solutionU}, and specifically assume that the singular part of the kernel $K$ is of the form $S(x)=L(x) =\log(1/\abs{x})$. Additionally, we restrict $x$ to an interval $(0,\nu]$ throughout, for some $0 < \nu \ll 1$ such that $u$ is continuously differentiable and increasing on $(0,\nu]$. This is possible due to \Cref{ass:solutionU}.

    Since we will prove the first two limits of \hyperref[thm:mainTheorem]{Main Theorem} here, we naturally introduce the shorthand
    \begin{equation}
        \label{eq:definitionOfEll}
        \ell(x) \ceq x\log(1/x),
    \end{equation}
    and
    \begin{equation}
        \label{eq:logarithmicgDefinition}
        g(x) \ceq \frac{u(x)}{\ell(x)},
    \end{equation}
    which is well-defined for all $x\in(0,\nu]$. We also adopt the function $\Lambda$ from \eqref{eq:secondDifferenceAdditivity}, whose utility comes from the identity
    \begin{equation}
        \label{eq:identityForKernelLogarithmicCase}
        \delta_x^2K(\tau x)= \delta_x^2L(\tau x)+\delta_x^2R(\tau x)=\Lambda(\tau) + \delta_x^2R(\tau x),
    \end{equation}
    which holds by linearity of $\delta_x^2$.

    Seeking to determine the limit of $g$ at zero, we begin with a lemma that asymptotically rephrases \eqref{eq:condensedFormulationEven} in terms of $g$.
    \begin{lemma}
        \label{lem:theLemmaThatProvidesTheAsymptoticEquationForLogarithmicG}
        With $\ell$ and $g$ as in \eqref{eq:definitionOfEll} and \eqref{eq:logarithmicgDefinition}, respectively, we have the equation
        \begin{equation}
            \label{eq:asymptoticEquationForLogarithmicG}
                (1+o(1))g(x)^2=o(1)g(x)+\int_x^{\nu} \brak[\Bigg]{\frac{\delta_x^2 K(y)\ell(y)}{\ell(x)^2}}g(y)\dd y
                +\int_\nu^\infty\frac{\delta_x^2 K(y)}{\ell(x)^2}u(y)\dd y,
        \end{equation}
        as $x \to 0$.
        Moreover, the square bracket is nonnegative and satisfies
        \begin{equation}
            \label{eq:logarithmicAuxiliaryIntegralLimit}
            \lim_{x \to 0}\int_x^{\nu} \frac{\delta_x^2 K(y)\ell(y)}{\ell(x)^2}\dd y=\frac{1}{2},
        \end{equation}
        while the final term admits the bound
        \begin{equation}
            \label{eq:boundOnRemainderTermInLogarithmicCase}
            0 \leq \int_\nu^\infty\frac{\delta_x^2 K(y)}{\ell(x)^2}u(y)\dd y \leq o(1)
        \end{equation}
        as $x \to 0$.
        \begin{proof}
            Dividing each side of \eqref{eq:condensedFormulationEven} by $\ell(x)^2$, we find
            \begin{equation}
                \label{eq:onTheWayToSimplerLogarithmicIdentity}
                (1+n(u(x)))g(x)^2=\frac{1}{\ell(x)^2}\brak[\Bigg]{\int_0^x+\int_x^{\nu}+\int_\nu^\infty}\delta_x^2 K(y)u(y)\dd y.
            \end{equation}
            where, since $\lim_{x\to0}n(u(x))=0$ by \Cref{ass:nonlinearity,ass:solutionU}, the left-hand side is indeed like that of \eqref{eq:asymptoticEquationForLogarithmicG}. As for the right-hand side, notice that
            \[
                \frac{1}{\ell(x)^2}\int_0^x \delta_x^2K(y) u(y)\dd y = \frac{\rho(x)}{\log(1/x)}g(x)
            \]
            when $\rho$ is defined through
            \[
                \rho(x) \ceq \frac{1}{xu(x)}\int_0^x \delta_x^2 K(y)u(y)\dd y.
            \]

            We exploit that $u$ is increasing on $(0,\nu]$ to conclude that $\rho$ is \emph{bounded} on this interval. This is because
            \[
                \abs{\rho(x)} \leq \frac{1}{x}\int_0^x \abs{\delta_x^2 K(y)}\dd y\leq \int_0^1 \abs{\Lambda(\tau)}\dd \tau + \nu \norm{R''}_{L^1},
            \]
            by \eqref{eq:identityForKernelLogarithmicCase} and \Cref{lem:remainderMayBeDisregarded}. In particular, the first term on right-hand side of \eqref{eq:onTheWayToSimplerLogarithmicIdentity} is $o(1) g(x)$, and after using $u(y)=\ell(y)g(y)$ for the second term, we obtain the right-hand side of \eqref{eq:asymptoticEquationForLogarithmicG}.

            Next, the nonnegativity of the expression inside the square bracket in \eqref{eq:asymptoticEquationForLogarithmicG} is an immediate consequence of the first part of \Cref{lem:secondDifferenceProperties}. To prove \eqref{eq:logarithmicAuxiliaryIntegralLimit}, we use \Cref{lem:remainderMayBeDisregarded} and the boundedness of $\ell$ on $(0,\nu]$  to conclude that $\int_0^\nu \delta_x^2 R(y)\ell(y)\dd y=O(x^2)$. Thus
            \[
                \lim_{x \to 0}{\int_x^\nu \frac{\delta_x^2 K(y) \ell(y)}{\ell(x)^2}\dd y}= \lim_{x \to 0}{\int_1^{\nu / x}} \frac{\Lambda(\tau) \ell(\tau x)}{\ell(x)^2} x\dd \tau
            \]
            from \eqref{eq:identityForKernelLogarithmicCase} and the change of variables $\tau \mapsto \tau x$. By simplifying the integrand, setting $z=\nu/x$, and splitting the integral, this last limit is equal to
            \[
                \lim_{z \to \infty}\brak[\Bigg]{ \frac{\int_1^{z} \Lambda(\tau) \tau\dd \tau }{\log(z/\nu)}+\frac{\int_1^{z} \Lambda(\tau) \ell(\tau)\dd \tau }{\log(z/\nu)^2}}=\lim_{z \to \infty}{\Lambda(z)z^2\brak[\Bigg]{1 -\frac{ \log(z)}{2\log(z/\nu)}}}=\frac{1}{2}.
            \]

            The first equality follows from an application of L'Hôpital's rule to each of the two terms, while the second follows from the observation that $\Lambda(\tau)=1/\tau^2+O(1/\tau^3)$ as $\tau \to \infty$. The latter can be seen directly from its definition in \eqref{eq:secondDifferenceAdditivity}. Finally, the bound in \eqref{eq:boundOnRemainderTermInLogarithmicCase} follows from \Cref{lem:secondDifferenceProperties} and $u$ being nonnegative and bounded.
        \end{proof}
    \end{lemma}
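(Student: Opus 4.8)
The plan is to derive \eqref{eq:asymptoticEquationForLogarithmicG} directly from the even formulation \eqref{eq:condensedFormulationEven} by dividing through by $\ell(x)^2$ and then controlling the three natural pieces $\int_0^x$, $\int_x^\nu$, $\int_\nu^\infty$ of the resulting right-hand integral separately. After dividing, the left-hand side is $(1+n(u(x)))g(x)^2$; since $u$ is continuous with $u(0)=0$ and $n\in C^1(\R_0^+)$ with $n(0)=0$ (\Cref{ass:solutionU,ass:nonlinearity}), one has $n(u(x))\to0$, which produces the prefactor $1+o(1)$.

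For the innermost piece I would use that $u$ is increasing on $(0,\nu]$, so $u(y)\le u(x)$ for $y\in(0,x)$, whence $\ell(x)^{-2}\int_0^x\delta_x^2K(y)u(y)\dd y$ is bounded in absolute value by $u(x)\,\ell(x)^{-2}\int_0^x\abs{\delta_x^2K(y)}\dd y$. Applying the decomposition \eqref{eq:identityForKernelLogarithmicCase}, the substitution $y=\tau x$, and \Cref{lem:remainderMayBeDisregarded} shows that the last integral is at most $x\bigl(\int_0^1\abs{\Lambda(\tau)}\dd\tau+\nu\norm{R''}_{L^1}\bigr)$, in which the bracket is finite since the logarithmic singularities of $\Lambda$ at $\tau=0$ and $\tau=1$ are integrable. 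Because $x\,u(x)\,\ell(x)^{-2}=g(x)/\log(1/x)$, the innermost piece is thus $o(1)\,g(x)$. The middle piece is put into the stated shape simply by writing $u(y)=\ell(y)g(y)$, and the nonnegativity of the bracket $\delta_x^2K(y)\ell(y)/\ell(x)^2$ on $(x,\nu)$ follows from the first assertion of \Cref{lem:secondDifferenceProperties} together with $\ell>0$ on $(0,1)$. For the tail, \Cref{lem:secondDifferenceProperties} gives $0\le\int_\nu^\infty\delta_x^2K(y)\dd y\le-K'(\nu-x)x^2$, and since $u$ is nonnegative and bounded the tail term lies between $0$ and a constant times $-K'(\nu-x)x^2/\ell(x)^2=-K'(\nu-x)/\log(1/x)^2$, which is $o(1)$; this last estimate is also exactly \eqref{eq:boundOnRemainderTermInLogarithmicCase}.

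The remaining and most delicate point is the limit \eqref{eq:logarithmicAuxiliaryIntegralLimit}. I would first discard the regular part: by \Cref{lem:remainderMayBeDisregarded} and boundedness of $\ell$ on $(0,\nu]$ one has $\int_x^\nu\delta_x^2R(y)\ell(y)\dd y=O(x^2)$, which is $o(1)$ after dividing by $\ell(x)^2=x^2\log(1/x)^2$. For the singular part the decisive structural fact, from \eqref{eq:secondDifferenceAdditivity}, is that $\delta_x^2L(\tau x)=\Lambda(\tau)$ does not depend on $x$; the substitution $y=\tau x$ then turns the remaining integral into $\int_1^{\nu/x}\Lambda(\tau)\,\tau\,\frac{\log(1/x)-\log\tau}{\log(1/x)^2}\dd\tau$. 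Splitting this into $\frac{1}{\log(1/x)}\int_1^{\nu/x}\Lambda(\tau)\tau\dd\tau$ and $-\frac{1}{\log(1/x)^2}\int_1^{\nu/x}\Lambda(\tau)\tau\log\tau\dd\tau$, setting $z=\nu/x$, and applying L'Hôpital's rule to each quotient while using the asymptotics $\Lambda(\tau)=\tau^{-2}+O(\tau^{-3})$ as $\tau\to\infty$ (read off from \eqref{eq:secondDifferenceAdditivity}) gives the contributions $1$ and $-\tfrac12$, hence the value $\tfrac12$.

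I expect the main obstacle to be the careful bookkeeping of logarithmic factors at this borderline scaling. Unlike the homogeneous case, the contribution of $(0,x)$ is negligible only after an additional division by $\log(1/x)$, and the constant $\tfrac12$ in \eqref{eq:logarithmicAuxiliaryIntegralLimit} arises from weighing a term of size $\log z$ and one of size $(\log z)^2$ against the normalisation $\log(1/x)^2$. Tracking the precise powers of the logarithm---not merely the powers of $x$---in each estimate is where the argument must be handled with the most care.
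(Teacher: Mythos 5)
Your proposal is correct and follows essentially the same route as the paper's own proof: the same three-piece decomposition of the integral, the same use of monotonicity of $u$ to bound the innermost piece by $g(x)/\log(1/x)$ times a bounded quantity (the paper packages this as an auxiliary function $\rho$, but the estimate is identical), the same discarding of $\delta_x^2 R$ via Lemma 3.3, and the same change of variables followed by splitting into two quotients and applying L'Hôpital's rule with $\Lambda(\tau)=\tau^{-2}+O(\tau^{-3})$ to obtain the contributions $1$ and $-\tfrac12$. There is no substantive difference in approach.
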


    We are ready to prove the first limit of \hyperref[thm:mainTheorem]{Main Theorem}.
    \begin{proposition}
        \label{prop:logarithmicAsymptoticBehavior}
        Under \Cref{ass:integrabilitySymmetryAndConvexity,ass:nonlinearity,ass:solutionU}, the solution enjoys the limit
        \begin{equation}
            \label{eq:logarithmicLimit}
            \lim_{x \to 0} \frac{u(x)}{x\log(1/x)} = \frac{1}{2}
        \end{equation}
        when $K$ has a logarithmic singularity.
    \end{proposition}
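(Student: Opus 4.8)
The plan is to read off the limit \eqref{eq:logarithmicLimit} from the asymptotic equation \eqref{eq:asymptoticEquationForLogarithmicG} by evaluating it along strategically chosen sequences $x_k \searrow 0$. Write
\[
    m \ceq \liminf_{x \to 0} g(x), \qquad M \ceq \limsup_{x \to 0} g(x),
\]
where $g = u/\ell$ is the continuous, nonnegative function from \eqref{eq:logarithmicgDefinition}; the goal is to show $m = M = \tfrac12$. By \Cref{lem:theLemmaThatProvidesTheAsymptoticEquationForLogarithmicG}, equation \eqref{eq:asymptoticEquationForLogarithmicG} reads
\[
    (1+o(1))\,g(x)^2 = o(1)\,g(x) + \int_x^\nu \frac{\delta_x^2 K(y)\,\ell(y)}{\ell(x)^2}\,g(y)\dd y + o(1)
\]
as $x \to 0$, the kernel factor is nonnegative with $\int_x^\nu \frac{\delta_x^2 K(y)\,\ell(y)}{\ell(x)^2}\dd y = \tfrac12 + o(1)$, and the omitted tail term is nonnegative. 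Hence the integral term is squeezed between $\bigl(\inf_{[x,\nu]} g\bigr)\bigl(\tfrac12 + o(1)\bigr)$ and $\bigl(\sup_{[x,\nu]} g\bigr)\bigl(\tfrac12 + o(1)\bigr)$, and this pinching, fed with well-chosen $x$, drives everything.

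First I would prove that $g$ is bounded near $0$ and that $M \le \tfrac12$. If $g$ were unbounded, then, being continuous on $(0,\nu]$, it would be unbounded as $x \searrow 0$; picking levels $N_k \to \infty$, the \emph{last exit points} $x_k \ceq \sup\{x \in (0,\nu] : g(x) \ge N_k\}$ satisfy $g(x_k) \ge N_k$, $g(y) \le g(x_k)$ for $y \in [x_k,\nu]$, and $x_k \to 0$ (since $g$ is bounded on each compact $[\delta,\nu]$). Evaluating the equation at $x = x_k$, bounding the integral above by $g(x_k)\bigl(\tfrac12+o(1)\bigr)$, and dividing by $g(x_k)$ gives $(1+o(1))\,g(x_k) \le \tfrac12 + o(1)$, contradicting $g(x_k) \to \infty$. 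With $g$ bounded, for $\varepsilon > 0$ one shrinks $\nu$ so that $g \le M + \varepsilon$ on $(0,\nu]$; taking $x_k \to 0$ with $g(x_k) \to M$ and using the upper squeeze yields $M^2 \le (M+\varepsilon)/2$, whence $M \le \tfrac12$ as $\varepsilon \to 0$.

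Next I would establish the matching lower bound. Since $u$ is positive and increasing on $(0,\nu]$, $g$ is bounded below by a positive constant on each $[\delta,\nu]$; so if $m = 0$, the last exit points $x_k \ceq \sup\{x \in (0,\nu] : g(x) \le \varepsilon_k\}$ with $\varepsilon_k \searrow 0$ satisfy $0 < g(x_k) \le \varepsilon_k$, $g(y) \ge g(x_k)$ on $[x_k,\nu]$, and $x_k \to 0$; evaluating the equation at $x = x_k$, bounding the integral \emph{below} by $g(x_k)\bigl(\tfrac12+o(1)\bigr)$, and dividing by $g(x_k)$ gives $(1+o(1))\,g(x_k) \ge \tfrac12 + o(1)$, incompatible with $g(x_k) \to 0$. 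Thus $m > 0$. Finally, taking $x_k \to 0$ with $g(x_k) \to m$ and bounding the integral below by $\bigl(\inf_{(0,\nu]} g\bigr)\bigl(\tfrac12+o(1)\bigr)$ yields $m^2 \ge \tfrac12 \inf_{(0,\nu]} g$; since $\inf_{(0,\nu]} g \nearrow m$ as $\nu \searrow 0$, this gives $m^2 \ge \tfrac12 m$, hence $m \ge \tfrac12$ because $m > 0$. Combined with $m \le M \le \tfrac12$ this forces $m = M = \tfrac12$, which is \eqref{eq:logarithmicLimit}.

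The $o(1)$-bookkeeping is routine — it is already packaged into \eqref{eq:asymptoticEquationForLogarithmicG} — and the genuinely delicate point is the selection of the sequences $x_k$. The crux is that, to obtain an upper (resp.\ lower) estimate, one should not take an arbitrary sequence realising $M$ (resp.\ $m$), but the \emph{last} point $x \le \nu$ at which $g$ reaches a prescribed level from below (resp.\ above): this is exactly what collapses $\sup_{[x_k,\nu]} g$ (resp.\ $\inf_{[x_k,\nu]} g$) to $g(x_k)$ and turns the pinched integral into a scalar inequality. The two edge cases — $g$ unbounded near $0$, and $\liminf g = 0$ — are handled by letting the prescribed level diverge, respectively tend to $0$, using that $g$ is bounded on compact subintervals of $(0,\nu]$ (for the former) and bounded below there by a positive constant (for the latter). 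Unlike the homogeneous case, no subsequent analysis of a system of inequalities is needed here.
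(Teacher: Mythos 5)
Your proof is correct and follows essentially the same strategy as the paper's: evaluate the asymptotic equation from \Cref{lem:theLemmaThatProvidesTheAsymptoticEquationForLogarithmicG} along sequences chosen so that $g(x_k)$ is extremal over $[x_k,\nu]$, squeeze the kernel integral between $\inf_{[x_k,\nu]}g$ and $\sup_{[x_k,\nu]}g$ times $\tfrac12+o(1)$, and then pass $\nu\to 0$. Your explicit ``last exit point'' construction is exactly the device the paper encodes via the auxiliary monotone functions $\ul g$, $\ol g$ and the phrase ``ensure that $g=\ul g$ along this sequence''; the only structural differences are cosmetic (you do $M$ before $m$ and keep the tail as a bare $o(1)$ rather than absorbing it into $o(1)g(x)$ via boundedness of $1/g$, but both are fine since you also invoke its nonnegativity when dividing by a $g(x_k)\to 0$).
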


    \begin{proof}
        With $g$ as in \eqref{eq:logarithmicgDefinition}, our strategy is to prove that
        \begin{equation}
            \label{eq:inequalitiesForLimInfAndLimSupOfLogarithmicG}
            \limsup_{x \to 0}{g(x)}\eqc M\leq \tfrac{1}{2}\leq m \ceq \liminf_{x \to 0}{g(x)},
        \end{equation}
        which clearly implies the desired limit.

        We first prove that $m \geq 1/2$. The function $\ul{g}$ defined by
        \begin{equation}
            \label{eq:ulg}
            \ul{g}(x) \ceq \min_{y \in [x,\nu]}{g(y)}
        \end{equation}
        is nondecreasing on $(0,\nu]$, and we find
        \begin{equation}
            \label{eq:inequalityAlongAlambdaLogarithmic}
            g(x)^2 \geq o(1)g(x) + \ul{g}(x)\parn*{\frac{1}{2}+o(1)}
        \end{equation}
        as $x \to 0$ from \eqref{eq:asymptoticEquationForLogarithmicG}. Here we have used both \eqref{eq:logarithmicAuxiliaryIntegralLimit}, and the nonnegativity of the square bracket and the final term.

        Choose now a sequence $\brac{x_k}_{k \in \N} \subset (0,\nu]$ realising $m$. Assuming, for the sake of contradiction, that $m=0$, we may specifically ensure that $g = \ul{g}$ along this sequence. This is possible by positivity and continuity of $g$ on $(0,\nu]$. Then \eqref{eq:inequalityAlongAlambdaLogarithmic} yields
        \[
            g(x_k) \geq \frac{1}{2} + o(1)
        \]
        as $k \to \infty$, after division by $g(x_k)$. Thus, in fact, $m > 0$, and we instead arrive at
        \[
            m^2 \geq \frac{1}{2}\inf_{y \in (0,\nu]}{g(y)}
        \]
        from \eqref{eq:inequalityAlongAlambdaLogarithmic}. Taking the limit $\nu \to 0$, we conclude that $m \geq 1/2$.

        For $M$, we similarly define
        \[
            \ol{g}(x) \ceq \max_{y \in [x,\nu]}{g(y)},
        \]
        which is nonincreasing on $(0,\nu]$, and find
        \begin{equation}
            \label{eq:upperAsymptoticEquationLogarithmic}
            g(x)^2 \leq o(1)g(x) + \ol{g}(x)\parn*{\frac{1}{2}+o(1)}
        \end{equation}
        from \eqref{eq:asymptoticEquationForLogarithmicG}. The last term can no longer be discarded, but can be combined with the first term. This is because of \eqref{eq:boundOnRemainderTermInLogarithmicCase}, and the fact that $m > 0$ entails that $1/g$ is bounded on $(0,\nu]$. Choosing a realising sequence for $M$ in an analogous way, we find $M < \infty$ and
        \[
            M^2 \leq \frac{1}{2} \sup_{y \in (0,\nu]}{g(y)},
        \]
        whence $M \leq 1/2$, after taking the limit $\nu \to 0$.
    \end{proof}
    \subsection{The limit for the derivative}
        \label{ssec:derivativeLimitLogarithmic}
        We move on to proving the second limit of \hyperref[thm:mainTheorem]{Main Theorem}. Analogously to the function $g$ in \eqref{eq:logarithmicgDefinition}, we introduce the quotient
        \begin{equation}
            \label{eq:logarithmichDefinition}
            h(x) \ceq \frac{u'(x)}{L(x)}=\frac{u'(x)}{\log(1/x)},
        \end{equation}
        which is well defined on $(0,\nu]$; where $u'$ is also continuous and nonnegative. By L'Hôpital's rule, a limit of $h$ at zero would immediately imply a limit for $g$. Perhaps curiously, we will go the other way; that is, prove the limit of $h$ by exploiting the already established limit for $g$ in \Cref{prop:logarithmicAsymptoticBehavior}.

        We also introduce -- for notational convenience -- the function
        \begin{equation}
            \label{eq:definitionOfPsiForLimitOfLogarithmicDerivative}
            \Psi(\tau) \ceq \log{\abs*{\frac{1+\tau}{1-\tau}}},
        \end{equation}
        which will serve a similar role to that of $\Lambda$ from \eqref{eq:secondDifferenceAdditivity}. It is positive on $\R^+$ and appears in the relation
        \begin{equation}
            \label{eq:identityForLogarithmicDerivative}
            \delta_{2x}K(\tau x) = \delta_{2x}L(\tau x) + \delta_{2x}R(\tau x) = -\Psi(\tau) + \delta_{2x}R(\tau x)
        \end{equation}
        where $\delta_{2x}f= f(\cdot+x)-f(\cdot-x)$ denotes a first-order central difference.

        \begin{lemma}
            With $h$ and $\Psi$ as defined in \eqref{eq:logarithmichDefinition} and \eqref{eq:definitionOfPsiForLimitOfLogarithmicDerivative} respectively, we have the asymptotic equation
            \begin{equation}
                \label{eq:asymptoticEquationForDerivative}
                (1+o(1))h(x) = \frac{1}{2} +  \int_0^{2} \brak[\Bigg]{\frac{\Psi(\tau)L(\tau x)}{L(x)^2}}h(\tau x)\dd \tau + o(1),
            \end{equation}
            as $x\to0$. Moreover, the expression inside the square brackets satisfies
            \begin{equation}
                \label{eq:derivativeLowerIntegralAsymptotics}
                \lim_{x\to0}\int_0^{2} \frac{\Psi(\tau)L(\tau x)}{L(x)^2}\dd \tau = 0,
            \end{equation}
            and is positive for small $x>0$.
        \end{lemma}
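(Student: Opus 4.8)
The plan is to obtain \eqref{eq:asymptoticEquationForDerivative} by differentiating the governing equation \eqref{eq:condensedFormulationEven} in $x$ and then normalising. On the left, $\frac{\dd}{\dd x}\big[(1+n(u))u^2\big] = uu'\big(2+2n(u)+n'(u)u\big)$, and since $u(x)\to 0$ as $x\to 0$ forces both $n(u(x))\to 0$ and $n'(u(x))u(x)\to 0$ (by \Cref{ass:nonlinearity,ass:solutionU}), this equals $(2+o(1))u(x)u'(x)$. Writing $u=\ell g$ and $u'=Lh$ and using \Cref{prop:logarithmicAsymptoticBehavior} to replace $g$ by $\tfrac12+o(1)$, the left-hand side becomes $(1+o(1))\,xL(x)^2 h(x)$; dividing the whole identity by $xL(x)^2$ then produces the factor $(1+o(1))h(x)$ of \eqref{eq:asymptoticEquationForDerivative}.

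For the right-hand side I would split $\int_0^\infty = \int_0^\nu + \int_\nu^\infty$. The tail $\int_\nu^\infty \delta_x^2 K(y)u(y)\dd y$ may be differentiated under the integral and is $O(x)$, since $u$ is bounded and $K''=y^{-2}+R''$ is integrable on $[\nu/2,\infty)$ by \Cref{lem:remainderMayBeDisregarded}; after normalisation it is $o(1)$. For $\int_0^\nu \delta_x^2 K(y)u(y)\dd y$ I use the identity $\partial_x\delta_x^2 K(y)=\partial_y\,\delta_{2x}K(y)$ followed by integration by parts in $y$; the boundary term at $y=0$ vanishes since $\delta_{2x}K(0)=0$, and the one at $y=\nu$ is $O(x)$, so $\frac{\dd}{\dd x}\!\int_0^\nu \delta_x^2 K(y)u(y)\dd y = O(x) - \int_0^\nu \delta_{2x}K(y)u'(y)\dd y$. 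Inserting the decomposition ${-}\delta_{2x}K(\tau x)=\Psi(\tau)-\delta_{2x}R(\tau x)$ from \eqref{eq:identityForLogarithmicDerivative} — the $R$-part being $O(x)$, as $\norm{\delta_{2x}R}_{L^\infty}\le 2x\norm{R''}_{L^1}$ by \Cref{lem:remainderMayBeDisregarded} and $\int_0^\nu u' = u(\nu)$ by monotonicity of $u$ — and substituting $y=\tau x$ puts the differentiated equation in the form
\[
  (2+o(1))u(x)u'(x) = O(x) + x\int_0^{\nu/x} \Psi(\tau)\,u'(\tau x)\dd\tau ,
\]
where $\Psi>0$ on $\R^+$.

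Dividing by $xL(x)^2$ and splitting $\int_0^{\nu/x}=\int_0^2+\int_2^{\nu/x}$, the first part is exactly $\int_0^2\big[\Psi(\tau)L(\tau x)/L(x)^2\big]h(\tau x)\dd\tau$, because $\Psi(\tau)u'(\tau x)=\Psi(\tau)L(\tau x)h(\tau x)$. For the second part I would integrate by parts in the other direction, turning $u'$ back into $u$: with $\Psi'(\tau)=-2/(\tau^2-1)$ and $u(\tau x)=(\tfrac12+o(1))\,\tau x\,(L(x)-\log\tau)$ (from \Cref{prop:logarithmicAsymptoticBehavior}), and using $\int_2^z \tfrac{2\tau}{\tau^2-1}\dd\tau \sim 2\log z$ together with $\int_2^z \tfrac{2\tau\log\tau}{\tau^2-1}\dd\tau \sim (\log z)^2$ (L'Hôpital), one gets $\int_2^{\nu/x}\Psi(\tau)u'(\tau x)\dd\tau = (\tfrac12+o(1))L(x)^2$, so this part tends to $\tfrac12$; this is \eqref{eq:asymptoticEquationForDerivative}. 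Finally, $L(\tau x)=L(x)-\log\tau$ gives $\int_0^2\tfrac{\Psi(\tau)L(\tau x)}{L(x)^2}\dd\tau = \tfrac1{L(x)}\int_0^2\Psi(\tau)\dd\tau - \tfrac1{L(x)^2}\int_0^2\Psi(\tau)\log\tau\dd\tau \to 0$, since both integrals converge ($\Psi$ has only a logarithmic singularity at $\tau=1$), which is \eqref{eq:derivativeLowerIntegralAsymptotics}; and the bracket is positive for small $x>0$ because $\Psi>0$ on $\R^+$ and $L(\tau x)=\log(1/(\tau x))>0$ for $\tau\in(0,2)$ once $x<\tfrac12$.

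The main obstacle is the rigorous justification of the differentiation above: one cannot differentiate $\int_0^\nu \delta_x^2 K(y)u(y)\dd y$ naively under the integral sign, since $\partial_x\delta_x^2 K(y)=K'(y+x)-K'(y-x)$ fails to be locally integrable near $y=x$ because of the singularity of $K'$ at the origin. I would handle this by isolating a fixed neighbourhood of $y=x$ and there substituting so that the whole $x$-dependence is carried by $u$ — whose derivative is bounded on that neighbourhood, as it lies away from $y=0$ — and then checking that the boundary terms so produced cancel against those from integrating the remaining (principal value) integral by parts; equivalently, one may first integrate by parts against the continuous antiderivative $\mc{K}$ from \eqref{eq:K_antiderivative} and differentiate afterwards, still localising near $y=x$. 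One should also record that $u$ is absolutely continuous on $[0,\nu]$ — which follows from $u\in C^1((0,\nu])$, $u(0)=0$ and monotonicity of $u$, since then $\int_0^\nu u' = u(\nu)<\infty$ — so that the integrations by parts are valid.
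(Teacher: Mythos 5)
Your overall strategy matches the paper's quite closely: you rewrite the equation using the antiderivative $\mc{K}$ from \eqref{eq:K_antiderivative} and differentiate (precisely how the paper circumvents the non-integrable singularity of $K'$ near $y=x$ that you correctly identify), you normalise by $xL(x)^2$, you dispose of the tail and the $R$-contribution by the same bounds, and you convert $u'$ back to $u$ via a second integration by parts so that \Cref{prop:logarithmicAsymptoticBehavior} can be used on the surviving integral. The constant $1/2$ arises in both arguments from the same L'H\^opital-style evaluation of $\int_2^z \tau\,\Psi'(\tau)\,(L(x)-\log\tau)\,\dd\tau$.

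There is, however, a genuine gap in the final step. You substitute $u(\tau x)=\parn[\big]{\tfrac12+o(1)}\tau x\parn[\big]{L(x)-\log\tau}$ into the integral over $\tau\in[2,\nu/x]$ and treat the $o(1)$ as if it were uniform in $\tau$. It is not: \Cref{prop:logarithmicAsymptoticBehavior} controls $g(y)=u(y)/(yL(y))$ only as $y\to 0$, whereas for $\tau$ comparable to $\nu/x$ the argument $\tau x$ stays bounded away from zero, so nothing forces $g(\tau x)$ to be close to $\tfrac12$ there. The paper repairs exactly this point by introducing an intermediate scale $\delta\in(0,\nu)$: it shows (for each fixed $\delta$) that the contribution from $\tau\in[\delta/x,\nu/x]$ is $o(1)$ in \eqref{eq:thirdAuxiliaryLimitUsedToProveTheFirst}, establishes the auxiliary limit \eqref{eq:fourthAuxiliaryLimitUsedToProveTheFirst} for the weight $\Psi'(\tau)\tau L(\tau x)/L(x)^2$ over $[2,\delta/x]$, and then, crucially, because this weight is single-signed, bounds the error by $\sup_{y\in(0,\delta]}\abs{g(y)-\tfrac12}$ before sending $\delta\to 0$. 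Without this two-scale argument your last line — that $\int_2^{\nu/x}\Psi(\tau)u'(\tau x)\dd\tau = (\tfrac12+o(1))L(x)^2$ — is asserted rather than proved. The fix is available (the tail $\tau\in[\delta/x,\nu/x]$ contributes only $O_\delta(1)$ to $\tfrac1x\int\frac{2u(\tau x)}{\tau^2-1}\dd\tau$, which is negligible after dividing by $L(x)^2$), but you must actually carry it out.
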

        \begin{proof}
            Recalling the antiderivative $\mc{K}$ from \eqref{eq:K_antiderivative}, the equation takes the form
            \[
                (1+n(u(x)))u(x)^2 = \delta_x^2\mc{K}(\nu)u(\nu) - \int_0^\nu \delta_x^2 \mc{K}(y)u'(y)\dd y + \int_\nu^\infty \delta_x^2 K(y) u(y) \dd y
            \]
            after integrating by parts on the right-hand side of \eqref{eq:condensedFormulationEven}. Subsequently differentiating, we get
            \begin{equation}
                \label{eq:condensedFormulationEvenDifferentiated}
                2 u(x) u'(x) \parn*{1 + \tilde{n}(u(x))} = \delta_{2x}K(\nu)u(\nu) - \int_0^\nu \delta_{2x}K(y)u'(y)\dd y + \int_\nu^\infty \delta_{2x} K'(y) u(y)\dd y
            \end{equation}
            for $x \in (0,\nu)$, where $\tilde{n}(t)\ceq n(t)+ \frac{1}{2}tn'(t)$. This computation is justifiable because of \Cref{ass:solutionU,ass:nonlinearity,ass:integrabilitySymmetryAndConvexity}.

            Using the definition of $h$ and $L$ from \eqref{eq:logarithmichDefinition}, the left-hand side of \eqref{eq:condensedFormulationEvenDifferentiated} can be written
            \begin{equation}
                \label{eq:hEquationLHS}
                2u(x)u'(x)(1+\tilde{n}(u(x))) = xL(x)^2 h(x)\parn*{1+o(1)}
            \end{equation}
            as $x \to 0$, where we have applied \Cref{prop:logarithmicAsymptoticBehavior} and the properties of $n$ from \Cref{ass:nonlinearity}. In particular, this motivates dividing \eqref{eq:condensedFormulationEvenDifferentiated} by $xL(x)^2$. We investigate each term on the right-hand side separately:

            In the first term, we have $\delta_{2x}K(\nu) \leq 0$ by monotonicity of $K$ on $\R^+$, so
            \[
                \delta_{2x}K(\nu) \norm{u}_{L^\infty}\leq \delta_{2x} K(\nu) u(\nu) \leq 0
            \]
            for all $x \in (0,\nu)$. Concerning the tail, we see that convexity of $K$ on $\R^+$ implies that $\delta_{2x}K'(y) \geq 0$ for all $0 < x < y$, and thus
            \[
                0 \leq \int_\nu^\infty \delta_{2x} K'(y)u(y) \dd y \leq \norm{u}_{L^\infty} \int_\nu^\infty \delta_{2x} K'(y)\dd y = -\norm{u}_{L^\infty}\delta_{2x}K(\nu)
            \]
            when $x \in (0,\nu)$. Combined, we therefore have
            \[
                \abs*{\delta_{2x}K(\nu)u(\nu) + \int_\nu^\infty \delta_{2x}K'(y)u(y)\dd y} \leq - \norm{u}_{L^\infty}\delta_{2x}K(\nu) \leq -2\norm{u}_{L^\infty}xK'(\nu-x)
            \]
            on $(0,\nu)$, where the final inequality again is due to the convexity of $K$. Consequently,
            \begin{equation}
                \label{eq:hEquationRHSremainder}
                \frac{1}{xL(x)^2}\parn*{\delta_{2x}K(\nu)u(\nu) + \int_\nu^\infty \delta_{2x}K'(y)u(y)\dd y} = o (1)
            \end{equation}
            as $x \to 0$.

            Turning to the final, evidently dominant, term on the right-hand side of \eqref{eq:condensedFormulationEvenDifferentiated}, we see that
            \[
                -\int_0^\nu \delta_{2x}K(y)u'(y)\dd y = x\int_0^{\nu / x} \Psi(\tau) L(\tau x)h'(\tau x)\dd \tau - \int_0^\nu \delta_{2x}R(y)u'(y)\dd y
            \]
            by \eqref{eq:identityForLogarithmicDerivative} and \eqref{eq:logarithmichDefinition}. We have also made the change of variables $\tau \mapsto \tau x$ in the first integral. Since
            \[
                \abs*{\int_0^\nu \delta_{2x}R(y) u'(y) \dd y} \leq \abs{\delta_{2x}R(y)}u(\nu) \leq 2x \norm{R''}_{L^1}\norm{u}_{L^\infty}
            \]
            by \Cref{ass:integrabilitySymmetryAndConvexity}, we arrive at
            \begin{equation}
                \label{eq:hEquationRHSdominant}
                -\frac{1}{xL(x)^2}\int_0^\nu \delta_{2x}K(y)u'(y)\dd y = \int_0^{\nu / x} \brak*{\frac{\Psi(\tau)L(\tau x)}{L(x)^2}}h(\tau x)\dd \tau + o(1)
            \end{equation}
            as $x \to 0$.

            Thus, dividing \eqref{eq:condensedFormulationEvenDifferentiated} by $xL(x)^2$; followed by inserting \eqref{eq:hEquationLHS}, \eqref{eq:hEquationRHSremainder}, and \eqref{eq:hEquationRHSdominant}; we obtain
            \[
                (1+o(1))h(x) = \int_0^{\nu/x} \brak*{\frac{\Psi(\tau)L(\tau x)}{L(x)^2}}h(\tau x)\dd \tau + o(1)
            \]
            as $x \to 0$. The expression inside the square brackets is clearly nonnegative for $0 <x\leq \nu \ll 1$, and the auxiliary limit \eqref{eq:derivativeLowerIntegralAsymptotics} follows directly from integrability of $\Psi(\tau)$ and $\Psi(\tau)L(\tau)$ on $(0,2)$. The proof will therefore be complete once the limit
            \begin{equation}
                \label{eq:derivativeUpperIntegralAsymptotics}
                \lim_{x \to 0}{\int_2^{\nu/x} \brak*{\frac{\Psi(\tau)L(\tau x)}{L(x)^2}} h(\tau x) \dd \tau } = \frac{1}{2}.
            \end{equation}
            is established.

            To demonstrate this limit, we first argue that, for each fixed $\delta \in(0,\nu)$, we have
            \begin{align}
                \int_{\delta / x}^{\nu / x} \brak*{\frac{\Psi(\tau)L(\tau x)}{L(x)^2}} h(\tau x) \dd \tau & = o(1) \label{eq:thirdAuxiliaryLimitUsedToProveTheFirst}
                \intertext{and}
                \int_{2}^{\delta / x} \frac{\Psi'(\tau)\tau L(\tau x)}{L(x)^2} \dd \tau                   & = -1 + o(1) \label{eq:fourthAuxiliaryLimitUsedToProveTheFirst}
            \end{align}
            as $x \to 0$. Indeed, assuming $x$ is small enough for $\delta/x\geq 2$ to hold, we get that
            \[
                \Psi(\tau)L(\tau x) = \log\parn*{ 1 + \frac{2}{\tau-1}}\log\parn*{\frac{1}{\tau x}} \leq 2\frac{1}{\tau - 1} \log\parn*{\frac{1}{\delta}}\lesssim_\delta \frac{1}{\tau}
            \]
            for all $\tau\geq \delta/x$. Using this, and the fact that $h$ is bounded on $[\delta,\nu]$ (by continuity), we obtain \eqref{eq:thirdAuxiliaryLimitUsedToProveTheFirst}. The second limit, found in \eqref{eq:fourthAuxiliaryLimitUsedToProveTheFirst}, follows from an argument very similar to the one we used to prove \eqref{eq:logarithmicAuxiliaryIntegralLimit}.

            Since $u(x) = xL(x)g(x)$ and $u'(x) = L(x)h(x)$ by definition, see \eqref{eq:logarithmicgDefinition} and \eqref{eq:logarithmichDefinition}, we may use integration by parts to compute that
            \begin{equation}\label{eq:theComputationShowingHowHRelatesToG}
                \begin{aligned}
                    \int_2^{\delta/x} \brak*{\frac{\Psi(\tau)L(\tau x)}{L(x)^2}}h(\tau x) \dd \tau & = \int_2^{\delta/x} \frac{\Psi(\tau)u'(\tau x)}{L(x)^2} \dd \tau                                                                \\
                    & =\frac{\Psi(\delta/x)u(\delta) - \Psi(2)u(2x)}{xL(x)^2} - \int_2^{\delta/x}  \frac{\Psi'(\tau)u(\tau x)}{xL(x)^2} \dd \tau \\
                    & = o(1) -\int_2^{\delta/x} \brak*{\frac{\Psi'(\tau) \tau L(\tau x)}{L(x)^2}}g(\tau x) \dd \tau
                \end{aligned}
            \end{equation}
            as $x\to0$. On the second line, we deal with the first term by using that $\Psi(\delta/x)=O(x)$ and $u(2x)=O(xL(x))$, as $x\to 0$. The latter of the two is a consequence of \Cref{prop:logarithmicAsymptoticBehavior}.

            Adding \eqref{eq:thirdAuxiliaryLimitUsedToProveTheFirst} and \eqref{eq:theComputationShowingHowHRelatesToG} together, and subtracting $1/2$ from each side, we find that
            \begin{equation}\label{eq:finalEquationForTheHEquation}
                \int_2^{\nu/x}\brak[\Bigg]{\frac{\Psi(\tau)L(\tau x)}{L(x)^2}} h(\tau x) \dd \tau-\frac{1}{2}
                =o(1) - \int_2^{\delta/x}\brak[\Bigg]{\frac{\Psi'(\tau)\tau L(\tau x)}{L(x)^2}}\parn[\bigg]{g(\tau x)-\frac{1}{2}} \dd \tau
            \end{equation}
            upon using \eqref{eq:fourthAuxiliaryLimitUsedToProveTheFirst}. Exploiting that the integrand in \eqref{eq:fourthAuxiliaryLimitUsedToProveTheFirst} is single-signed, we can conclude from \eqref{eq:finalEquationForTheHEquation} that
            \begin{equation*}
                \limsup_{x\to0}{\abs[\Bigg]{\int_2^{\nu/x}\brak[\Bigg]{\frac{\Psi(\tau)L(\tau x)}{L(x)^2}} h(\tau x) \dd \tau-\frac{1}{2}}}\leq \sup_{y\in(0,\delta]}{\abs[\bigg]{g(y)-\frac{1}{2}}},
            \end{equation*}
            for every $0 < \delta < \nu$. Thus \eqref{eq:derivativeUpperIntegralAsymptotics}, and hence \eqref{eq:asymptoticEquationForDerivative}, follows by \Cref{prop:logarithmicAsymptoticBehavior}.
        \end{proof}

        We may now prove the desired limit for the derivative in \hyperref[thm:mainTheorem]{Main Theorem}.

        \begin{proposition}
            \label{prop:logarithmicDerivativeAsymptoticBehavior}
            Under \Cref{ass:integrabilitySymmetryAndConvexity,ass:nonlinearity,ass:solutionU}, the derivative of the solution enjoys the limit
            \[
                \lim_{x \to 0}\frac{u'(x)}{\log(1/x)}=\frac{1}{2}
            \]
            when $K$ has a logarithmic singularity.
        \end{proposition}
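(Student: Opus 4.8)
The plan is to run the same $\liminf$–$\limsup$ scheme used for $g$ in \Cref{prop:logarithmicAsymptoticBehavior}, now driven by the asymptotic equation \eqref{eq:asymptoticEquationForDerivative}. Put $m \ceq \liminf_{x\to0}h(x)$ and $M \ceq \limsup_{x\to0}h(x)$; since $h\geq0$ on $(0,\nu]$ these lie a priori in $[0,\infty]$, and the goal is $m=M=\tfrac12$. The lower bound is immediate: the square bracket in \eqref{eq:asymptoticEquationForDerivative} is nonnegative and $h\geq0$, so the integral term there is $\geq0$, whence $(1+o(1))h(x)\geq\tfrac12+o(1)$; dividing by the positive factor $1+o(1)$ and letting $x\to0$ gives $m\geq\tfrac12$. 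In particular $h$ is bounded below near the origin, so $1/h$ is bounded there.

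The key observation is that, conversely, \emph{as soon as $h$ is known to be bounded above near $0$}, the proposition follows at once. Indeed, $\ol h(x)\ceq\sup_{0<y\leq x}h(y)$ is then finite, nondecreasing, and tends to $M$ as $x\to0$, and since $h(\tau x)\leq\ol h(2x)$ for $\tau\in(0,2)$ the nonnegativity of the bracket together with \eqref{eq:derivativeLowerIntegralAsymptotics} yields
\[
    0\leq\int_0^{2}\brak[\Big]{\frac{\Psi(\tau)L(\tau x)}{L(x)^2}}h(\tau x)\dd\tau\leq\ol h(2x)\int_0^{2}\frac{\Psi(\tau)L(\tau x)}{L(x)^2}\dd\tau=o(1),
\]
so \eqref{eq:asymptoticEquationForDerivative} collapses to $(1+o(1))h(x)=\tfrac12+o(1)$, i.e.\ $h(x)\to\tfrac12$. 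Hence the whole matter reduces to the single point that $h$ is bounded near the origin --- equivalently, that $u'(x)=O(\log(1/x))$ as $x\to0$.

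This boundedness is where I expect the genuine work to lie, and a naive running-maximum estimate does \emph{not} close here: because \eqref{eq:asymptoticEquationForDerivative} couples $h(x)$ to the values of $h$ on the whole interval $(0,2x)$ --- unlike the $g$-equation \eqref{eq:asymptoticEquationForLogarithmicG}, which only reaches $[x,\nu]$ --- one merely obtains a relation of the shape $\ol h(z)\leq 1+O\parn*{\ol h(2z)/\log(1/z)}$, which is vacuous when $\ol h\equiv\infty$. To break the circle the natural route is to return to the differentiated identity \eqref{eq:condensedFormulationEvenDifferentiated}, whose dominant right-hand side (after discarding the $O(x)$ tail and remainder exactly as in the preceding lemma) is $\int_0^{2x}\Psi(y/x)u'(y)\dd y$, which is precisely $xL(x)^2$ times the integral of the previous display. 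Integrating by parts in $y$ --- the logarithmic singularities of $\Psi$ and $\Psi'$ at $\tau=1$ cancel in the principal value --- rewrites this term purely through $u$, schematically as
\[
    \int_0^{2x}\Psi(y/x)u'(y)\dd y=\Psi(2)u(2x)-\int_0^{2x}\frac{u(y)-u(x)}{x-y}\dd y-\int_0^{2x}\frac{u(y)}{x+y}\dd y,
\]
and \Cref{prop:logarithmicAsymptoticBehavior} then furnishes the needed control, since $u(x)\asymp x\log(1/x)$ and consequently $\int_0^{2x}h(y)\dd y\leq u(2x)/\log(1/(2x))=O(x)$. Feeding the resulting estimate back into \eqref{eq:asymptoticEquationForDerivative}, and using crucially that its bracket integrates to $0$ (and not to $\tfrac12$, as in the $g$-case), should produce a self-improving bound forcing $M<\infty$. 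As the authors themselves caution in the bidirectional setting, this is exactly the kind of estimate where logarithmic factors are easily mislaid, so some care is needed; it may also be that the required bound $u'(x)=O(\log(1/x))$ is already available from the regularity analysis that makes continuous differentiability of $u$ redundant in \Cref{ass:solutionU}. In any case, once $M<\infty$ the conclusion is as in the second paragraph, and combined with $m\geq\tfrac12$ it gives $\lim_{x\to0}u'(x)/\log(1/x)=\tfrac12$.
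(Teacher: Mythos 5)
Your reduction is exactly the paper's: the nonnegativity of the bracket gives $m\geq\tfrac12$ instantly, and once $h$ is bounded near $0$ the integral term in \eqref{eq:asymptoticEquationForDerivative} collapses by \eqref{eq:derivativeLowerIntegralAsymptotics}, yielding the limit. So the entire weight, as you rightly say, falls on showing $M<\infty$. You also correctly locate the relevant Ces\`aro-type input: from $u$ increasing and \Cref{prop:logarithmicAsymptoticBehavior} one gets $\tfrac1x\int_0^x h(y)\dd y\to\tfrac12$, equivalently $\int_0^{2x}h=O(x)$.

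The gap is that a Ces\`aro bound on $h$ only forces $\liminf_{x\to0}h(x)<\infty$; by itself it says nothing about $\limsup$. Your integration-by-parts reformulation does not supply the missing pointwise control, and you acknowledge as much (``should produce a self-improving bound'', ``it may be already available from the regularity analysis''). Neither option closes the argument: the kernel $\frac{1}{x-y}$ appearing after differentiating $\Psi$ reintroduces exactly the derivative information you are trying to bound, and the regularity argument you gesture at (that makes $C^1$ near $0$ redundant in \Cref{ass:solutionU}) is carried out in the paper only for the homogeneous kernel (\Cref{lemma:Holder}), not the logarithmic one. What the paper actually does is argue by contradiction from unboundedness: it introduces the set $A$ of running maxima of $h$, shows $0$ is an accumulation point of $A$ with $h\to\infty$ along $A$, and then --- by a scaling trick comparing the equation at $x$ and at $\ol{x}\ceq\min([x,\nu]\cap A)$, using monotonicity of $\Psi$ on $(0,1)$ and of $t\mapsto tL(t)^2$ --- proves $\liminf h(x)/h(\ol{x})\geq 1$. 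This upgrades ``$\limsup h=\infty$'' to the full limit $\lim_{x\to0}h(x)=\infty$, which \emph{does} contradict the Ces\`aro identity (the latter forces $\liminf h<\infty$). That promotion from subsequential blow-up to genuine blow-up is the key idea your sketch is missing.
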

        \begin{proof}
            With $h$ as in \eqref{eq:logarithmichDefinition}, the result follows immediately from \eqref{eq:asymptoticEquationForDerivative} and \eqref{eq:derivativeLowerIntegralAsymptotics}; provided we are able to show that $h$ is bounded near the origin. We know that $h$ is nonnegative on $(0,\nu]$, so it is sufficient to prove that $h$ is bounded \emph{above} on this set. For the sake of contradiction, suppose that this is not the case; which by continuity of $h$ necessitates blow-up at the origin. As a result, the set
            \[
                A \ceq \brac[\Big]{x \in (0,\nu] : h(x) = \max_{z\in[x,\nu]}h(z)}
            \]
            of points where $h$ is larger than subsequent values must have the origin as an accumulation point. Furthermore, the limit
            \begin{equation}
                \label{eq:limitOfHRestrictedToAIsInfinity}
                \lim_{\substack{x \to 0\\ x \in A}}{h(x)} = \infty
            \end{equation}
            must also hold.

            Observing that $\nu \in A$, we see that the intersection $[x,\nu]\cap A$ is nonempty and closed for any $x\in(0,\nu]$. In particular, the point
            \[
                \ol{x} \ceq \min([x,\nu] \cap A),
            \]
            exists, and enjoys the property
            \begin{equation}
                \label{eq:leastUpperBoundInAProperty}
                h(\ol{x})=\max_{z\in[x,\nu]}h(z).
            \end{equation}
            for every $x \in (0,\nu]$. For convenience, we define the accompanying scaling factor
            \[
                \tau_x\ceq x/\ol{x} \in (0,1]
            \]
            for each $x\in(0,\nu]$. Note also that
            \begin{equation}
                \label{eq:xBarIsSmall}
                \ol{x}= o(1)
            \end{equation}
            as $x \to 0$, by the aforementioned fact that $A$ admits zero as an accumulation point.

            By differentiating, one sees that that $x \mapsto xL(x)^2$ is increasing on $(0,e^{-2})$, which we may assume entirely contains $(0,\nu]$. From this, we obtain
            \[
                \frac{1}{\tau_x L(x)^2} = \frac{\ol{x}}{xL(x)^2} \geq \frac{1}{L(\ol{x})^2},
            \]
            which will be exploited in our next calculation: The change of variables $\tau \mapsto \tau / \tau_x$ yields
            \begin{equation}
                \label{eq:theFirstSingularPartMustBeLargeForSmallerX}
                \begin{aligned}
                    \int_0^1 \brak[\Bigg]{\frac{\Psi(\tau)L(\tau x)}{L(x)^2}}h(\tau x) \dd \tau & = \int_0^{\tau_x} \brak*{\frac{\Psi(\tau/\tau_x)L(\tau \ol{x})}{\tau_x L(x)^2}}h(\tau \ol{x})\dd \tau \\
                                                                                                & \geq \int_0^{\tau_x} \brak*{\frac{\Psi(\tau)L(\tau \ol{x})}{L(\ol{x})^2}}h(\tau \ol{x})\dd \tau,
                \end{aligned}
            \end{equation}
            where we have also used that $\Psi$ is positive, and increasing on $(0,1)$. This can be seen directly from its definition in \eqref{eq:definitionOfPsiForLimitOfLogarithmicDerivative}.

            As we shall see, \eqref{eq:theFirstSingularPartMustBeLargeForSmallerX} actually implies that $h(x)$ is comparable to $h(\ol{x})$. Taking the difference of \eqref{eq:asymptoticEquationForDerivative} evaluated at $x$ and $\ol{x}$, respectively, we get
            \begin{multline*}
                (1+o(1))h(x) - (1 + o(1))h(\ol{x})\\
                = \int_0^{2} \brak[\Bigg]{\frac{\Psi(\tau)L(\tau x)}{L(x)^2}}h(\tau x)\dd \tau - \int_0^{2} \brak[\Bigg]{\frac{\Psi(\tau)L(\tau \ol{x})}{L(\ol{x})^2}}h(\tau \ol{x})\dd \tau + o(1)
            \end{multline*}
            as $x \to 0$, after using \eqref{eq:xBarIsSmall}. On the right-hand side,
            \[
                \int_0^{2} \brak[\Bigg]{\frac{\Psi(\tau)L(\tau x)}{L(x)^2}}h(\tau x)\dd \tau \geq \int_0^{\tau_x} \brak*{\frac{\Psi(\tau)L(\tau \ol{x})}{L(\ol{x})^2}}h(\tau \ol{x})\dd \tau
            \]
            by \eqref{eq:theFirstSingularPartMustBeLargeForSmallerX}, and positivity of the integrand. Thus
            \begin{equation}
                \label{eq:hBarAndHAsymptotic}
                \begin{aligned}
                    (1+o(1))h(x) - (1 + o(1))h(\ol{x}) & \geq - \int_{\tau_x}^2 \brak*{\frac{\Psi(\tau)L(\tau \ol{x})}{L(\ol{x})^2}}h(\tau \ol{x})\dd \tau + o(1) \\
                                                       & \geq o(1)h(\ol{x}) + o(1)
                \end{aligned}
            \end{equation}
            as $x\to 0$, in view of \eqref{eq:xBarIsSmall}, \eqref{eq:leastUpperBoundInAProperty}, and \eqref{eq:derivativeLowerIntegralAsymptotics}.

            As a consequence of \eqref{eq:hBarAndHAsymptotic}, we conclude that
            \[
                \liminf_{x \to 0}{\frac{h(x)}{h(\ol{x})}} \geq 1,
            \]
            which, since $\lim_{x \to 0}{h(\ol{x})} = \infty$ by \eqref{eq:limitOfHRestrictedToAIsInfinity}, implies that
            \[
                \lim_{x \to 0}{h(x)} = \infty
            \]
            holds. This leads to our contradiction: With $g$ as in \eqref{eq:logarithmicgDefinition}, we see through \eqref{eq:logarithmicLimit} and integration by parts that
            \[
                \frac{1}{x} \int_0^x h(y)\dd y = g(x) - \frac{1}{x} \int_0^x \frac{g(y)}{\log(1/y)}\dd y \to \frac{1}{2}
            \]
            as $x\to0$. For this to be the case, we must necessarily have $\liminf_{x\to0}h(x)<\infty$; contradicting what we just demonstrated. In conclusion, $h$ is bounded on $(0,\nu]$, and the proof is complete.
        \end{proof}

    \subsection{The bidirectional Whitham equation} \label{sec:applicationForTheHighestWavesOfBidirectionalWhithamEquation}
        By inserting the steady-wave ansatz $\phi(t,x) = \varphi(x-ct)$ and $(t,x)\mapsto v(x-ct)$ into \eqref{eq:bidirectionalWhitham} and integrating, the time-independent Whitham--Boussinesq system
        \begin{equation}\label{eq:WB}
            \begin{aligned}
                -c\varphi + K_B * v + \varphi v & = 0, \\
                -cv + \varphi + v^2/2           & =0,
            \end{aligned}
        \end{equation}
        is obtained. The constants of integration have been set to zero in order to match the setting of \cite{Ehrnstroem19Existence}. By subsequently eliminating $\varphi$, we find the \emph{steady bidirectional Whitham equation}
        \begin{equation}
            \label{eq:steadyBidirectionalWhitham}
            K_B*v = v(c-v)(c-v/2)
        \end{equation}
        for $v$. Given a solution to \eqref{eq:steadyBidirectionalWhitham}, the associated $\varphi$ can easily be recovered through the second equation in \eqref{eq:WB}.

        We see that even if \eqref{eq:steadyBidirectionalWhitham} arose from a system, it is of the exact same type as \eqref{eq:generalEquationSteady}. Repeating the procedure in \Cref{sec:overview}, we first discern that the right-hand side of \eqref{eq:steadyBidirectionalWhitham} increases to the left of a local maximum at $v =(1-1/\sqrt{3})c$. If $v$ is even, and assumes this value at the origin, then
        \begin{equation}
            \label{eq:rescaledBidirectionalWhithamVariable}
            u \ceq \frac{\sqrt{3}\pi c}{2}\parn*{\parn*{1-\frac{1}{\sqrt{3}}}c-v}
        \end{equation}
        satisfies the equation
        \[
            \parn*{1+\frac{2}{3\pi c^2}u(x)}u(x)^2 = \int_0^\infty \delta_x^2 (\pi K_B)(y)u(y)\dd y,
        \]
        which is precisely of the form \eqref{eq:condensedFormulationEven}. Moreover, \Cref{ass:nonlinearity} holds trivially, and the formula
        \[
            K_B(x) = \frac{1}{\pi}\log\parn*{\coth\parn*{\frac{\pi \abs{x}}{4}}}
        \]
        from \eqref{eq:bidirectionalWhithamSymbol} and \cite{Oberhettinger90Tables}*{I.7.37} shows that \Cref{ass:integrabilitySymmetryAndConvexity} is satisfied with $S=L$.

        In \cite{Ehrnstroem19Existence}*{Theorem 5.9}, the existence of a limiting $2\pi$-periodic solution $(v,c)$ of \eqref{eq:steadyBidirectionalWhitham} is established. This solution is even, assumes $v(0) = (1-1/\sqrt{3})c$ at the crest, decreases on the half-period $[0,\pi]$, and is smooth on $(0,2\pi)$. In particular, \Cref{ass:solutionU} holds both for this solution, and for similar solutions with a different period. The hypotheses of \Cref{prop:logarithmicAsymptoticBehavior,prop:logarithmicDerivativeAsymptoticBehavior} are therefore satisfied for the rescaled variable in \eqref{eq:rescaledBidirectionalWhithamVariable}. From this, we may deduce the asymptotic behaviour of $v$, and in turn that of $\varphi$.

        \begin{corollary}[Asymptotic behaviour of highest waves]
            \label{cor:bidirectionalWhithamAsymptotic}
            Let $v$ be a solution of the steady bidirectional Whitham equation \eqref{eq:steadyBidirectionalWhitham} that is even, assumes $v(0) = (1-1/\sqrt{3})c$, and is smooth and decreasing on a nonempty interval $(0,\nu)$. Then
            \begin{equation}
                \label{eq:bidirectionalWhithamAsymptotic}
                \begin{aligned}
                    v(x)       & = \parn*{1-\frac{1}{\sqrt{3}}}c - \parn*{\frac{1}{\sqrt{3}\pi c}+o(1)}x\log(1/x) \\
                    \varphi(x) & = \frac{c^2}{3} - \parn*{\frac{1}{3\pi}+o(1)}x\log(1/x)
                \end{aligned}
            \end{equation}
            as $x \searrow 0$, with $\varphi$ as described after \eqref{eq:steadyBidirectionalWhitham}. Moreover, one also has
            \begin{equation}
                \label{eq:bidirectionalWhithamDerivativeAsymptotic}
                \begin{aligned}
                    v'(x)       & =-\parn*{\frac{1}{\sqrt{3}\pi c}+o(1)}\log(1/x) \\
                    \varphi'(x) & = - \parn*{\frac{1}{3\pi}+o(1)}\log(1/x)
                \end{aligned}
            \end{equation}
            as $x \searrow 0$.
        \end{corollary}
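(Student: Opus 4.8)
The plan is to derive \eqref{eq:bidirectionalWhithamAsymptotic} and \eqref{eq:bidirectionalWhithamDerivativeAsymptotic} purely algebraically from the limits already established in \Cref{prop:logarithmicAsymptoticBehavior,prop:logarithmicDerivativeAsymptoticBehavior} for the rescaled variable $u$ defined in \eqref{eq:rescaledBidirectionalWhithamVariable}. As noted in the text preceding the corollary, the hypotheses of those propositions are satisfied for $u$ — indeed \Cref{ass:nonlinearity} is trivial, \Cref{ass:integrabilitySymmetryAndConvexity} holds with $S=L$ thanks to the explicit formula $K_B(x)=\frac1\pi\log(\coth(\pi|x|/4))$, and \Cref{ass:solutionU} holds for any even solution $v$ that attains $v(0)=(1-1/\sqrt3)c$ and is smooth and decreasing on $(0,\nu)$, since then $u\ge 0$, $u(0)=0$, $u$ is continuous, bounded (by continuity and periodicity, or boundedness of $v$), continuously differentiable, and increasing on $(0,\nu)$. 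So the first step is simply to record that \Cref{prop:logarithmicAsymptoticBehavior} gives $u(x)=\parn*{\tfrac12+o(1)}x\log(1/x)$ as $x\searrow 0$, and \Cref{prop:logarithmicDerivativeAsymptoticBehavior} gives $u'(x)=\parn*{\tfrac12+o(1)}\log(1/x)$.

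Next I would unwind the scaling. From \eqref{eq:rescaledBidirectionalWhithamVariable} one has
\[
    \parn*{1-\tfrac1{\sqrt3}}c - v(x) = \frac{2}{\sqrt3\,\pi c}\,u(x),
\]
so substituting the asymptotics for $u$ yields
\[
    \parn*{1-\tfrac1{\sqrt3}}c - v(x) = \frac{2}{\sqrt3\,\pi c}\parn*{\tfrac12+o(1)}x\log(1/x) = \parn*{\frac{1}{\sqrt3\,\pi c}+o(1)}x\log(1/x),
\]
which is the first line of \eqref{eq:bidirectionalWhithamAsymptotic}. Differentiating the scaling relation gives $v'(x) = -\tfrac{2}{\sqrt3\,\pi c}u'(x)$, and inserting the limit for $u'$ produces the first line of \eqref{eq:bidirectionalWhithamDerivativeAsymptotic}. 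For the $\varphi$ statements I would use the second equation of \eqref{eq:WB}, namely $\varphi = cv - v^2/2 = v(c - v/2)$. At $x=0$, $v(0)=(1-1/\sqrt3)c$ gives $\varphi(0) = (1-1/\sqrt3)c\cdot(c - (1-1/\sqrt3)c/2)$; a short computation simplifies this to $c^2/3$, matching the claimed leading constant. Writing $v(x) = (1-1/\sqrt3)c - w(x)$ with $w(x) = \parn*{\tfrac1{\sqrt3\,\pi c}+o(1)}x\log(1/x)\to 0$, I expand
\[
    \varphi(x) = v(x)\parn*{c - \tfrac12 v(x)} = \varphi(0) - \varphi'(0)\big|_{\text{formal}}\,w(x) + O(w(x)^2),
\]
more precisely $\varphi(x) = \varphi(0) - \parn*{c - v(0)}w(x) + \tfrac12 w(x)^2$ after collecting terms, where $c - v(0) = c/\sqrt3$. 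Hence
\[
    \tfrac{c^2}{3} - \varphi(x) = \tfrac{c}{\sqrt3}\,w(x) - \tfrac12 w(x)^2 = \parn*{\frac{1}{3\pi} + o(1)}x\log(1/x),
\]
since the quadratic term is $o(x\log(1/x))$. Differentiating $\varphi = v(c - v/2)$ gives $\varphi'(x) = v'(x)(c - v(x))$, so $\varphi'(x) = v'(x)\parn*{c/\sqrt3 + o(1)} = -\parn*{\tfrac{1}{3\pi}+o(1)}\log(1/x)$, which is the second line of \eqref{eq:bidirectionalWhithamDerivativeAsymptotic}.

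This corollary is essentially bookkeeping, so there is no genuine obstacle; the only points requiring a modicum of care are verifying that the rescaled $u$ meets all three assumptions (particularly boundedness and the sign of $u$, which follow from $v$ being decreasing away from its crest value $(1-1/\sqrt3)c$), confirming the arithmetic identity $\varphi(0)=c^2/3$, and checking that the quadratic remainder $w(x)^2$ is negligible against $x\log(1/x)$ — which it plainly is, being of order $x^2\log(1/x)^2$. I would also remark that the estimates are one-sided ($x\searrow 0$) but extend to $x\to 0$ by the evenness of $v$ and $\varphi$, if a two-sided statement is desired; here the corollary is stated for $x\searrow 0$, so nothing further is needed.
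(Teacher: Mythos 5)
Your proposal is correct and matches the paper's intended argument: the paper itself presents this corollary without a detailed proof, leaving the reader to unwind the scaling in \eqref{eq:rescaledBidirectionalWhithamVariable} and apply \Cref{prop:logarithmicAsymptoticBehavior,prop:logarithmicDerivativeAsymptoticBehavior}, which is exactly what you do. (One tiny slip: expanding $\varphi = cv - v^2/2$ with $v = v(0) - w$ gives $\varphi(x) = \varphi(0) - (c - v(0))w - \tfrac12 w^2$, so the remainder should be $-\tfrac12 w^2$, not $+\tfrac12 w^2$; as you note, it is negligible either way.)
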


        The authors of \cite{Ehrnstroem19Existence} also pose a natural question about the \emph{global} regularity of these waves: What is a reasonable function space that can capture the kind of asymptotic behaviour in \eqref{eq:bidirectionalWhithamAsymptotic} in an optimal way? A sensible candidate is the space of log-Lipschitz functions \cite{Edmunds00Embeddings}. This space appears, for instance, in critical Sobolev embeddings, and as a simple example of a class of non-Lipschitz right-hand sides for which the Osgood criterion \cite{Fjordholm18Sharp} for the Picard--Lindelöf theorem holds.

        This global regularity is not a direct consequence of the local behaviour in \eqref{eq:bidirectionalWhithamAsymptotic}. Oscillations may, even under additional assumptions of monotonicity and smoothness, cause the estimates to blow up in the limit. We will show that the highest waves indeed are log-Lipschitz by combining \eqref{eq:bidirectionalWhithamDerivativeAsymptotic} with fairly straightforward bounds. To get the result, it is advantageous to introduce the concept of a modulus of continuity, commonly used in approximation theory.

        We shall say that $\map{\omega}{\R_0^+}{\R_0^+}$ is a \emph{modulus of continuity} if it is increasing, concave, continuous, and vanishes at the origin. Any function $\map{f}{I}{\R}$ is then said to admit $\omega$ as a modulus of continuity if
        \[
            \abs{f(x)-f(y)}\leq \omega(\abs{x-y})
        \]
        for all $x,y \in I$. The following simple lemma is ours, but is very likely known in some form in the literature. It can be viewed as a kind of L'Hôpital's rule for moduli of continuity.

        \begin{lemma}
            \label{lem:localModulus}
            Suppose that $f$ is absolutely continuous on an open interval $I \ni 0$, and that
            \begin{equation}
                \label{eq:localModulusLimSupFinite}
                \elimsup_{t \to 0}{\frac{\abs{f'(t)}}{\omega'(\abs{t})}} < \infty
            \end{equation}
            for a modulus of continuity $\omega$. Then there are $M,\delta > 0$ such that $f$ admits $M\omega$ as a modulus of continuity on $(-\delta,\delta)$.
        \end{lemma}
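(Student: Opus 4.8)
The plan is to convert the pointwise essential bound on $f'$ into the desired modulus estimate via the fundamental theorem of calculus, using nothing more than concavity of $\omega$ to absorb the factor $\omega'$ back into $\omega$.

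First I would unwind the definition of the essential limit superior in \eqref{eq:localModulusLimSupFinite}: there are $C < \infty$ and $\delta > 0$, which we may shrink so that $(-\delta,\delta) \subseteq I$, with $|f'(t)| \leq C\,\omega'(|t|)$ for almost every $t \in (-\delta,\delta)$. (The ratio is meaningful because a nonconstant concave function with $\omega(0) = 0$ is strictly positive on $\R^+$ and so has a positive derivative almost everywhere near the origin; the case $\omega \equiv 0$ forces $f$ constant and is trivial.) I would then record two elementary properties of a modulus of continuity. Being concave with $\omega(0) = 0$, $\omega$ is subadditive, so in particular $\omega(t) - \omega(s) \leq \omega(t-s)$ whenever $0 \leq s \leq t$. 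And being concave, increasing, and vanishing at $0$, the function $\omega$ is locally absolutely continuous on $\R_0^+$ with $\omega(t) = \int_0^t \omega'(s)\dd s$; this remains valid even if $\omega'(0^+) = +\infty$, since $\omega' \geq 0$ is monotone and $\int_0^t \omega'(s)\dd s = \lim_{s \to 0^+}\parn{\omega(t) - \omega(s)} = \omega(t)$.

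With these in hand, for $x < y$ in $(-\delta,\delta)$ absolute continuity of $f$ gives $|f(y) - f(x)| \leq \int_x^y |f'(t)|\dd t \leq C\int_x^y \omega'(|t|)\dd t$, and everything reduces to estimating $\int_x^y \omega'(|t|)\dd t$ by a constant multiple of $\omega(|x-y|) = \omega(y-x)$. If $x$ and $y$ have the same sign, this integral equals $\abs{\omega(|y|) - \omega(|x|)} \leq \omega(y-x)$ by subadditivity; if $x < 0 < y$, it splits as $\omega(|x|) + \omega(y) \leq 2\omega(y-x)$ by monotonicity of $\omega$. In all cases $\int_x^y \omega'(|t|)\dd t \leq 2\omega(|x-y|)$, so $f$ admits $M\omega$ as a modulus of continuity on $(-\delta,\delta)$ with $M = 2C$.

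I do not expect a genuine obstacle here; the only places requiring care are the correct reading of the essential $\limsup$ so as to extract an almost-everywhere pointwise bound on $f'$ on a fixed neighbourhood of $0$, the validity of the fundamental theorem of calculus for $\omega$ near the origin where it may fail to be Lipschitz, and the treatment of the interval straddling $0$, where the absolute value inside $\omega'(|t|)$ must be handled through subadditivity and monotonicity rather than a single telescoping step.
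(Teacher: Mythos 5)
Your proof is correct and follows essentially the same route as the paper's: extract an a.e. pointwise bound $\abs{f'(t)} \lesssim \omega'(\abs{t})$ near $0$, integrate via the fundamental theorem of calculus, and then estimate $\int_x^y \omega'(\abs{t})\dd t$ by $2\omega(\abs{y-x})$ using subadditivity of $\omega$ (from concavity and $\omega(0)=0$) for same-sign endpoints and monotonicity for straddling intervals. The only difference is cosmetic bookkeeping of the constant and your added attention to the degenerate case $\omega\equiv 0$, which the paper leaves implicit.
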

        \begin{proof}
            Note that $\omega$ is necessarily locally absolutely continuous. Due to \eqref{eq:localModulusLimSupFinite}, we are able to find $M,\delta > 0$ such that
            \[
                \abs{f'(t)} \leq \frac{M}{2}\omega'(\abs{t})
            \]
            for a.e. $t \in (-\delta,\delta)$. It follows that
            \[
                \abs{f(y)-f(x)} = \abs*{\int_x^y f'(t)\dd t} \leq \frac{M}{2}\int_x^y \omega'(\abs{t})\dd t = \frac{M}{2}\brak[\big]{\omega(\abs{t})\sgn(t)}_x^y
            \]
            for all $x \leq y \in (-\delta,\delta)$.

            Since $\omega$ is concave on $\R_0^+$, and $\omega(0) \geq 0$, it is also subadditive. Thus
            \[
                \brak[\big]{\omega(\abs{t})\sgn(t)}_x^y = \omega(y) - \omega(x) \leq \omega\parn*{y-x} = \omega(\abs{y-x})
            \]
            when $0 \leq x \leq y$, and a similar line of reasoning works for the case $x \leq y \leq 0$. Finally, if $x \leq 0 \leq y$, then
            \[
                \brak[\big]{\omega(\abs{t})\sgn(t)}_x^y = \omega(y)+\omega(-x) \leq \omega(y-x) + \omega(y-x) = 2\omega(\abs{y-x})
            \]
            by monotonicity of $\omega$. This concludes the proof.
        \end{proof}

        It is furthermore straightforward to show that if $f$ admits $M_i \omega$ as a modulus of continuity on an interval $I_i$ for $i = 1,2$, and $I_1 \cap I_2 \neq \varnothing$, then $f$ admits $(M_1 + M_2)\omega$ as a modulus of continuity on $I_1 \cup I_2$. This follows since for any $x \in I_1$ and $y \in I_2$, there is some $z \in I_1 \cap I_2$ between $x$ and $y$, whence
        \begin{equation}
            \label{eq:stitching}
            \abs*{f(y)-f(x)} \leq M_2\omega(\abs{y-z}) + M_1\omega(\abs{z-x})\leq (M_1 + M_2)\omega\parn*{\abs{y-x}},
        \end{equation}
        by monotonicity of $\omega$. We use this to get the following result.

        \begin{theorem}[Global regularity of highest waves]
            Any periodic solution to \eqref{eq:steadyBidirectionalWhitham} satisfying the hypothesis of \Cref{cor:bidirectionalWhithamAsymptotic} around its crests, belongs to the class of log-Lipschitz functions. That is, there is a constant $M > 0$ such that
            \begin{equation}
                \label{eq:globalLogLipschitz}
                \abs{v(x)-v(y)} \leq M\abs{x-y}\log\parn*{1+\frac{1}{\abs{x-y}}}
            \end{equation}
            for all $x,y \in \R$.
        \end{theorem}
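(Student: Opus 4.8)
The plan is to manufacture a single modulus of continuity that controls $v$ globally, by combining the local derivative asymptotics at the crests (\Cref{cor:bidirectionalWhithamAsymptotic}) with \Cref{lem:localModulus}, the smoothness of $v$ away from the crests, and the stitching principle \eqref{eq:stitching} together with periodicity. Set
\[
    \omega(t) \ceq t\log\parn*{1+\tfrac{1}{t}},
\]
extended by $\omega(0) = 0$. A direct computation gives $\omega'(t) = \log\parn*{1+1/t} - 1/(1+t)$ and $\omega''(t) = -1/\parn*{t(1+t)^2} < 0$, while the elementary inequality $\log(1+1/t) \geq 1/(1+t)$ gives $\omega' \geq 0$; hence $\omega$ is increasing, concave, continuous, and vanishes at the origin, i.e.\ a bona fide modulus of continuity, and moreover $\omega'(t) = \log(1/t) + O(1)$ as $t \searrow 0$. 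Proving \eqref{eq:globalLogLipschitz} is thus equivalent to showing that $v$ admits $M\omega$ as a modulus of continuity on all of $\R$ for some $M > 0$.

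First I would handle a neighbourhood of a crest, which after translation we may place at the origin. By the hypothesis of \Cref{cor:bidirectionalWhithamAsymptotic} and the evenness of $v$ about the crest, $\abs{v'(x)} = \parn*{\tfrac{1}{\sqrt3\pi c} + o(1)}\log(1/\abs{x})$ as $x \to 0$, so that
\[
    \elimsup_{x \to 0}\frac{\abs{v'(x)}}{\omega'(\abs{x})} = \frac{1}{\sqrt3\pi c} < \infty.
\]
In particular $v'$ is dominated near the crest by the integrable function $x \mapsto C\log(1/\abs{x})$, whence $v$ is absolutely continuous on a neighbourhood of the crest, and \Cref{lem:localModulus} provides $M_0, \delta_0 > 0$ such that $v$ admits $M_0\omega$ as a modulus of continuity on $(-\delta_0,\delta_0)$. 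Carrying this out at each of the finitely many crests in a period, we obtain an open neighbourhood of every crest on which $v$ admits a constant multiple of $\omega$ as a modulus of continuity.

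Next I would treat the complement of these neighbourhoods within one period. There $v < (1-1/\sqrt3)c$, so by the off-diagonal smoothing for the elliptic operator $K_B * \,\wildcard\,$ recalled in \Cref{sec:overview}, $v$ is smooth, hence Lipschitz on each of the finitely many resulting compact intervals. On any interval of length at most $D$, Lipschitz continuity with constant $C$ yields $C\abs{x-y} \leq \tfrac{C}{\log(1+1/D)}\,\omega(\abs{x-y})$, since $\log(1+1/s)$ is decreasing in $s$; so $v$ admits a multiple of $\omega$ as a modulus of continuity on each such piece as well. Arranging the crest neighbourhoods and the intermediate compact intervals so that consecutive ones overlap, finitely many applications of \eqref{eq:stitching} show that $v$ admits $M'\omega$ as a modulus of continuity on any window of three consecutive periods, say on $[-2\pi, 4\pi]$. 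Finally, to pass to all of $\R$: for $x,y$ with $\abs{x-y} \leq 2\pi$ pick $k \in \Z$ with $x - 2\pi k \in [0,2\pi)$, so that both $x - 2\pi k$ and $y - 2\pi k$ lie in $[-2\pi,4\pi]$, and $2\pi$-periodicity gives $\abs{v(x)-v(y)} \leq M'\omega(\abs{x-y})$; for $\abs{x-y} > 2\pi$ one has $\abs{v(x)-v(y)} \leq 2\norm{v}_{L^\infty} \leq \tfrac{2\norm{v}_{L^\infty}}{\omega(2\pi)}\omega(\abs{x-y})$ by monotonicity of $\omega$ ($v$ being continuous and periodic, hence bounded). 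Taking $M \ceq \max\brac[\big]{M',\, 2\norm{v}_{L^\infty}/\omega(2\pi)}$ gives \eqref{eq:globalLogLipschitz}.

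The conceptual point the argument circumvents is the one flagged before the statement: log-Lipschitz regularity does not follow from the pointwise crest asymptotics alone, and it is precisely the passage through the derivative and \Cref{lem:localModulus} that supplies the missing uniform control. I expect the only genuinely delicate steps to be verifying absolute continuity of $v$ across each crest — which rests on the logarithm being locally integrable, a fact already built into \Cref{cor:bidirectionalWhithamAsymptotic} — and choosing the finite covering of a period so that consecutive intervals overlap, so that \eqref{eq:stitching} applies; everything else is routine bookkeeping.
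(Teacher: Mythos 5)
Your proof is correct and follows essentially the same route as the paper: apply \Cref{lem:localModulus} with $\omega(t) = t\log(1+1/t)$ near each crest using the derivative asymptotics from \Cref{cor:bidirectionalWhithamAsymptotic}, invoke smoothness away from crests, and stitch via \eqref{eq:stitching} and periodicity. You supply somewhat more detail than the paper (verification that $\omega$ is a modulus of continuity, the explicit reduction of a Lipschitz bound to an $\omega$-bound on a bounded interval, and the separate treatment of $\abs{x-y} > 2\pi$), but the core argument is identical.
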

        \begin{proof}
            Because of \eqref{eq:bidirectionalWhithamDerivativeAsymptotic}, we can apply \Cref{lem:localModulus} with $\omega(t) \ceq t\log(1+1/t)$ to get \eqref{eq:globalLogLipschitz} in a neighborhood of each crest. Meanwhile, away from crests, the same conclusion holds by the smoothness furnished by \cite{Ehrnstroem19Existence}*{Lemma 4.1}. Since we have a compact domain from periodicity, the stitching argument in \eqref{eq:stitching} enables us to infer that there is a uniform constant $M > 0$ for which \eqref{eq:globalLogLipschitz} holds globally.
        \end{proof}
        \begin{remark}
            For a highest solitary solution, the same conclusion can be reached by combining compactness with a priori decay properties; see for instance \cite{Arnesen22Decay}. \emph{Small} solitary-wave solutions to the Whitham--Boussinesq system \eqref{eq:bidirectionalWhitham} were constructed in \cite{Nilsson19Solitary}, but at present there is no existence result for extreme solutions in the solitary case.
        \end{remark}

\section{Homogeneous kernel}
    \label{sec:homogeneousKernel}
    Like in the previous section, we shall adopt \Cref{ass:integrabilitySymmetryAndConvexity,ass:nonlinearity,ass:solutionU}, but now take the singular part of the kernel $K$ to be $S(x)=H(x)=\abs{x}^{-1/2}$. The same restriction of $x$ to $(0,\nu]$ for some $0 < \nu \ll 1$ will also be made. We will here prove the final two limits of \hyperref[thm:mainTheorem]{Main Theorem}, and so analogously to \eqref{eq:logarithmicgDefinition} define
    \begin{equation}
        \label{eq:homogeneousgDefinition}
        g(x) \ceq \frac{u(x)}{x^{1/2}}
    \end{equation}
    for $x>0$ in this section. We further remind the reader of $\Phi$ from \eqref{eq:homogeneousPhiDefinition}, which appears in the identity
    \begin{equation}
        \label{eq:identityForKernelHomogeneousCase}
        \delta_x^2 K(\tau x) =\delta_x^2 H(\tau x)+\delta_x^2 R(\tau x) =x^{-1/2}\Phi(\tau) + \delta_x^2 R(\tau x)
    \end{equation}
    due to \eqref{eq:secondDifferenceHomogeneity}.

    Understanding the properties of $\Phi$ will clearly be paramount for the calculations in this section, and we therefore start with a lemma listing a few of them. The bounds are certainly not optimal, but sufficient for our purposes. See also \Cref{fig:PhiGraph} in \Cref{sec:overview}.
    \begin{lemma}
        \label{lem:propertiesOfHomogeneousPhi}
        The function $\Phi$ is increasing on the interval $(0,1)$, where it has a unique root $\tau_0 \in (\frac{1}{2},\frac{2}{3})$. In addition, $\Phi$ is positive on $(1,\infty)$,
        \begin{equation}
            \label{eq:PhiIntegrals}
            \int_0^\infty \Phi(\tau) \dd \tau = 0, \quad \int_0^\infty \Phi(\tau)\tau^{1/2} \dd \tau = \frac{\pi}{2},
        \end{equation}
        and
        \begin{equation}
            \label{eq:bBounds}
            \frac{1}{2} < -\int_0^{\tau_0} \Phi(\tau)\tau^{1/2}\dd \tau  < \frac{3}{5}.
        \end{equation}
    \end{lemma}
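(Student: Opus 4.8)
The plan is to work entirely from the explicit formula \eqref{eq:homogeneousPhiDefinition} and settle the four assertions one at a time. For the \emph{monotonicity and the root}, on $(0,1)$ I would differentiate, obtaining
\[
\Phi'(\tau) = \tfrac12(1-\tau)^{-3/2} - \tfrac12(1+\tau)^{-3/2} + \tau^{-3/2},
\]
which is strictly positive since $(1-\tau)^{-3/2} > 1 > (1+\tau)^{-3/2}$ on this interval; hence $\Phi$ is increasing there. Because $\Phi(\tau) \to -\infty$ as $\tau \searrow 0$ and $\Phi(\tau) \to +\infty$ as $\tau \nearrow 1$, there is a unique root $\tau_0 \in (0,1)$. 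To localise it, I would evaluate $\Phi(\tfrac12) = \sqrt{2/3} - \sqrt2 < 0$ and $\Phi(\tfrac23) = \sqrt{3/5} + \sqrt3 - \sqrt6$; squaring the positive inequality $\sqrt{3/5} + \sqrt3 > \sqrt6$ reduces it to $900 > 720$, so $\Phi(\tfrac23) > 0$, and monotonicity then gives $\tau_0 \in (\tfrac12,\tfrac23)$.

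For \emph{positivity and the two integrals in \eqref{eq:PhiIntegrals}}: when $\tau > 1$ the points $\tau-1,\tau,\tau+1$ all lie in $\R^+$, where $H(t) = \abs{t}^{-1/2}$ is strictly convex, so $\Phi(\tau) = H(\tau+1) + H(\tau-1) - 2H(\tau) > 0$ by the same convexity argument used for the first part of \Cref{lem:secondDifferenceProperties}. The weighted integral is exactly \eqref{eq:betaIntegralIdentity} from \Cref{lem:toyEquation} with $s = 1/2$, so it equals $\beta_{1/2} = \tfrac12 B(\tfrac12,\tfrac12) = \tfrac\pi2$. For the unweighted one I would split $\Phi$ into its three summands and use the elementary antiderivatives of $(1\pm\tau)^{-1/2}$ and $\tau^{-1/2}$ to get, for $R > 1$,
\[
\int_0^R \Phi(\tau)\dd\tau = 2(1+R)^{1/2} + 2(R-1)^{1/2} - 4R^{1/2} = 2R^{1/2}\parn[\big]{(1+\tfrac1R)^{1/2} + (1-\tfrac1R)^{1/2} - 2};
\]
since $(1+x)^{1/2} + (1-x)^{1/2} - 2 = O(x^2)$, the right-hand side tends to $0$ as $R \to \infty$. (The integral converges: the singularities at $0$ and $1$ are integrable, and $\Phi(\tau) = O(\tau^{-5/2})$ at infinity.)

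For \emph{the bounds \eqref{eq:bBounds}}, set $\psi(\tau) \ceq -\Phi(\tau)\tau^{1/2} = 2 - \sqrt{\tau/(1+\tau)} - \sqrt{\tau/(1-\tau)}$ on $(0,1)$; both subtracted radicands are increasing there, so $\psi$ is decreasing, with $\psi(0) = 2$, $\psi(\tau_0) = 0$, and $\psi < 0$ on $(\tau_0,1)$. A short calculation — most easily obtained via $\tau = t^2$ and verified by differentiation — gives the antiderivative
\[
\int_0^c \psi(\tau)\dd\tau = 2c - \sqrt{c(1+c)} + \arsinh\sqrt c - \arcsin\sqrt c + \sqrt{c(1-c)}, \qquad c \in (0,1).
\]
Writing $b \ceq -\int_0^{\tau_0}\Phi(\tau)\tau^{1/2}\dd\tau = \int_0^{\tau_0}\psi$ and using $\tfrac12 < \tau_0 < \tfrac23$, I would split the integral at $\tfrac23$:
\[
\int_0^{2/3}\psi(\tau)\dd\tau \;\leq\; b \;\leq\; \int_0^{2/3}\psi(\tau)\dd\tau + \tfrac16\parn[\big]{-\psi(\tfrac23)},
\]
the first inequality because $\psi \leq 0$ on $[\tau_0,\tfrac23]$, the second because $-\psi$ is nonnegative and increasing on $[\tau_0,\tfrac23]$ while $\tfrac23 - \tau_0 < \tfrac16$. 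Both bounding quantities are explicit,
\[
\int_0^{2/3}\psi(\tau)\dd\tau = \tfrac13\parn[\big]{4 - \sqrt{10} + \sqrt2} + \arsinh\sqrt{\tfrac23} - \arcsin\sqrt{\tfrac23}, \qquad -\psi(\tfrac23) = \sqrt{\tfrac25} + \sqrt2 - 2,
\]
and the required inequalities $\tfrac12 < \int_0^{2/3}\psi$ and $\int_0^{2/3}\psi + \tfrac16\parn[\big]{\sqrt{2/5} + \sqrt2 - 2} < \tfrac35$ become elementary numerical estimates, since $\arsinh\sqrt{2/3} = \log\tfrac{\sqrt2+\sqrt5}{\sqrt3}$ and $\arcsin\sqrt{2/3} = \arctan\sqrt2$.

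\emph{Main obstacle.} All the sign, monotonicity and closed-form-integral steps are routine; the delicate point is certifying those last two transcendental inequalities with sufficient margin. In particular the lower bound $b > \tfrac12$ is tight — the naive comparison $b \geq \int_0^{1/2}\psi$ only gives about $0.507$ — so it is essential to compare against $\int_0^{2/3}\psi \approx 0.541$ as above and to record sharp enough rational enclosures of the logarithm and arctangent involved.
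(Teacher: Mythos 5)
Your proof is correct, and for everything except the upper bound in \eqref{eq:bBounds} it coincides with the paper's argument (differentiate $\Phi$ for monotonicity on $(0,1)$; evaluate at $\tfrac12$ and $\tfrac23$ to locate $\tau_0$; convexity of $H$ for positivity on $(1,\infty)$; \eqref{eq:betaIntegralIdentity} for the weighted integral; the unweighted integral by elementary antiderivatives; the lower bound on $b$ by comparing $\int_0^{\tau_0}$ with $\int_0^{2/3}$). Your antiderivative
\[
\int_0^c \psi(\tau)\dd\tau = 2c - \sqrt{c(1+c)} + \sqrt{c(1-c)} + \arsinh\sqrt c - \arcsin\sqrt c
\]
is algebraically identical to the paper's form \eqref{eq:bDefinition}, since
\[
\frac{2t^{3/2}}{(1+t)^{1/2}+(1-t)^{1/2}} = t^{1/2}\parn[\big]{(1+t)^{1/2}-(1-t)^{1/2}} = \sqrt{t(1+t)} - \sqrt{t(1-t)}.
\]

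Where you genuinely diverge from the paper is the upper bound $b<\tfrac35$. The paper observes that
\[
t \longmapsto -\int_0^t \Phi(\tau)\tau^{1/2}\dd \tau + t^{3/2}\Phi(t)
\]
is increasing on $(0,1)$, and evaluates at $\tau_0$ (where the extra term vanishes since $\Phi(\tau_0)=0$) against $2/3 > \tau_0$. You instead use a rectangle bound for the overshoot: since $-\psi$ is nonnegative and increasing on $[\tau_0,\tfrac23]$ and $\tfrac23-\tau_0<\tfrac16$, you get $b \leq \int_0^{2/3}\psi + \tfrac16(-\psi(\tfrac23))$. Both are elementary; your rectangle estimate is arguably easier to discover (it only uses the localisation $\tau_0\in(\tfrac12,\tfrac23)$ already established), and numerically it is sharper ($\approx 0.55$ versus the paper's $\approx 0.57$), though neither margin is needed. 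In both approaches the remaining work is certifying two closed-form transcendental inequalities, as you correctly flag; the paper faces the identical task.
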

    \begin{proof}
        The first integral in \eqref{eq:PhiIntegrals} is a trivial computation, while the second explicit integral is simply a special case of \eqref{eq:betaIntegralIdentity}. That $\Phi$ is increasing on $(0,1)$ follows directly from differentiating \eqref{eq:homogeneousPhiDefinition}; and, by explicit evaluation, one further sees that $\Phi(\frac{1}{2})<0<\Phi(\frac{2}{3})$. Hence there is a unique root $\tau_0$ on the interval, which necessarily lies in $(\frac{1}{2},\frac{2}{3})$. The positivity on $(1,\infty)$ follows by the same computation as for $\delta_x^2 K$ in \Cref{lem:secondDifferenceProperties}, using the strict convexity of $H$ on $\R^+$.

        For \eqref{eq:bBounds}, it is easily verified that
        \begin{equation}
            \label{eq:bDefinition}
            - \int_0^t \Phi(\tau)\tau^{1/2}\dd \tau  =2t-\frac{2t^{3/2}}{(1+t)^{1/2}+(1-t)^{1/2}}+\arsinh(t^{1/2})-\arcsin(t^{1/2}),
        \end{equation}
        for all $t\in(0,1)$. Due to the sign-change of $\Phi$ at $\tau = \tau_0$, this integral is maximised there, and lower bounds can be found by evaluation at any other point. In particular, we find
        \[
            - \int_0^{\tau_0} \Phi(\tau)\tau^{1/2}\dd \tau  >  - \int_0^{2/3} \Phi(\tau)\tau^{1/2}\dd \tau  > \frac{1}{2}.
        \]
        by evaluating \eqref{eq:bDefinition} at $2/3$.

        To establish the upper bound, we observe through \eqref{eq:bDefinition} and straightforward algebra that
        \[
            - \int_0^t \Phi(\tau)\tau^{1/2}\dd \tau + t^{3/2}\Phi(t) = \frac{1}{(t^{-1}-1)^{1/2}}-\frac{1}{(t^{-1}+1)^{1/2}} +\arsinh(t^{1/2})-\arcsin(t^{1/2}),
        \]
        and that this expression is increasing on $(0,1)$. Exploiting this, we get
        \[
            - \int_0^{\tau_0} \Phi(\tau)\tau^{1/2}\dd \tau =  - \int_0^{\tau_0} \Phi(\tau)\tau^{1/2}\dd \tau  + \tau_0^{3/2}\Phi(\tau_0)  <\frac{3}{5}
        \]
        after using the fact that $\tau_0$ is a root of $\Phi$, and evaluating at $2/3 > \tau_0$.
    \end{proof}

    We next provide an asymptotic rephrasing of \eqref{eq:condensedFormulationEven} for $g$, analogous to the one provided by \Cref{lem:theLemmaThatProvidesTheAsymptoticEquationForLogarithmicG} in the previous section. A fundamental difference from the logarithmic case is that the contribution from the integral $\int_0^x\delta_x^2K(y)u(y)\dd y$ can no longer be disregarded when passing to the limit. This is unlike \eqref{eq:asymptoticEquationForLogarithmicG}, and makes the subsequent arguments more involved.

    \begin{lemma}
        \label{lem:theLemmaThatProvidesTheAsymptoticEquationForHomogeneousG}
        With $g$ defined as in \eqref{eq:homogeneousgDefinition}, there is a function $\map{\lambda}{(0,1)}{(0,1)}$ so that
        \begin{multline}
            \label{eq:asymptoticEquationForHomogeneousG}
            (1+o(1))g(x)^2\\
            =\parn[\bigg]{\int_{\lambda(x)}^1\Phi(\tau)\dd \tau+o(1)}g(x)+\int_x^{\nu}\brak[\Bigg]{\frac{\delta_x^2K(y)y^{1/2}}{x}}g(y)\dd y+\int_\nu^\infty \frac{\delta_x^2 K(y)}{x}u(y)\dd y
        \end{multline}
        as $x\to0$. Moreover, the square bracket is positive and satisfies
        \begin{equation}
            \label{eq:homogeneousAuxiliaryIntegralLimit}
            \lim_{x \to 0}{\int_x^\nu \frac{\delta_x^2 K(y)y^{1/2}}{x}\dd y}=\int_{1}^\infty \Phi(\tau)\tau^{1/2}\dd \tau,
        \end{equation}
        while the final term admits the bound
        \begin{equation}
            \label{eq:boundOnRemainderTermIHomogeneousCase}
            0 \leq \int_\nu^\infty \frac{\delta_x^2 K(y)}{x}u(y)\dd y \leq o(1)
        \end{equation}
        as $x \to 0$
    \end{lemma}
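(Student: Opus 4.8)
The plan is to mimic the structure of the proof of \Cref{lem:theLemmaThatProvidesTheAsymptoticEquationForLogarithmicG}: divide \eqref{eq:condensedFormulationEven} by $x$, split the integral at $x$ and $\nu$, and then identify the asymptotics of each of the three pieces as $x \to 0$. The left-hand side is immediate, since $(1+n(u(x)))g(x)^2 = (1+o(1))g(x)^2$ by \Cref{ass:nonlinearity,ass:solutionU}. The tail term over $(\nu,\infty)$ is handled exactly as before: by \Cref{lem:secondDifferenceProperties} and the boundedness and nonnegativity of $u$ we get $0 \le x^{-1}\int_\nu^\infty \delta_x^2 K(y) u(y)\dd y \le -x^{-1}K'(\nu-x)x^2 \norm{u}_{L^\infty} = o(1)$, giving \eqref{eq:boundOnRemainderTermIHomogeneousCase}. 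For the middle term, using \eqref{eq:identityForKernelHomogeneousCase} and \Cref{lem:remainderMayBeDisregarded} (which bounds $\int_0^\nu \delta_x^2 R(y) y^{1/2}\dd y = O(x^2)$ since $y^{1/2}$ is bounded on $(0,\nu]$), the change of variables $y = \tau x$ turns $x^{-1}\int_x^\nu \delta_x^2 K(y) y^{1/2}\dd y$ into $\int_1^{\nu/x} \Phi(\tau)\tau^{1/2}\dd\tau + o(1)$, which converges to $\int_1^\infty \Phi(\tau)\tau^{1/2}\dd\tau$ by dominated convergence (here $\Phi(\tau) = O(\tau^{-5/2})$ at infinity, so $\Phi(\tau)\tau^{1/2}$ is integrable near $\infty$, and the only singularity, at $\tau=1$, is integrable). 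Positivity of the square bracket $\delta_x^2 K(y) y^{1/2}/x$ on $(x,\nu)$ is the nonnegativity half of \Cref{lem:secondDifferenceProperties} (strict because $K$ is strictly convex through its singular part $H$), so \eqref{eq:homogeneousAuxiliaryIntegralLimit} follows.

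The genuinely new ingredient, and the main obstacle, is the term $x^{-1}\int_0^x \delta_x^2 K(y) u(y)\dd y$, which in the logarithmic case was simply $o(1)g(x)$ but here must be retained. After the substitution $y = \tau x$ and using $u(\tau x) = (\tau x)^{1/2} g(\tau x)$ together with \eqref{eq:identityForKernelHomogeneousCase}, this term equals $\int_0^1 \Phi(\tau)\tau^{1/2} g(\tau x)\dd\tau + o(1)$ (the $\delta_x^2 R$-contribution again being $O(x)$). The difficulty is that $\Phi$ is not single-signed on $(0,1)$: it is negative on $(0,\tau_0)$ and positive on $(\tau_0,1)$. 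One cannot simply replace $g(\tau x)$ by $g(x)$ because no monotonicity of $g$ near zero is available. The resolution must be to define $\lambda(x)$ so that $\int_0^1 \Phi(\tau)\tau^{1/2}g(\tau x)\dd\tau = \big(\int_{\lambda(x)}^1 \Phi(\tau)\dd\tau\big)g(x) + o(1)$; that is, $\lambda(x)$ records where the weighted average of $g(\tau x)$ against $\Phi(\tau)\tau^{1/2}$ sits relative to the function $t \mapsto \int_t^1 \Phi(\tau)\dd\tau$. Concretely, one writes $\int_0^1 \Phi(\tau)\tau^{1/2}g(\tau x)\dd\tau = c(x) g(x)$ where $c(x) = g(x)^{-1}\int_0^1\Phi(\tau)\tau^{1/2}g(\tau x)\dd\tau$ (using that $g > 0$ on $(0,\nu]$ by \Cref{ass:solutionU}), and one must check that $c(x)$ lies in the range of $t \mapsto \int_t^1 \Phi(\tau)\dd\tau$ over $t \in (0,1)$, so that $\lambda(x)$ can be chosen with $\int_{\lambda(x)}^1 \Phi(\tau)\dd\tau = c(x)$.

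To see that $c(x)$ falls in that range — which is the crux — one bounds $\int_0^1 \Phi(\tau)\tau^{1/2}g(\tau x)\dd\tau$ above and below using the sign of $\Phi$: since $\Phi \le 0$ on $(0,\tau_0)$ and $\Phi \ge 0$ on $(\tau_0,1)$, and $g$ is positive, the integral is bounded below by $\big(\int_0^{\tau_0}\Phi(\tau)\tau^{1/2}\dd\tau\big)\sup_{(0,x]}g + \big(\int_{\tau_0}^1\Phi(\tau)\tau^{1/2}\dd\tau\big)\inf_{(0,x]}g$ and above by the symmetric expression. Combined with the quantitative estimate \eqref{eq:bBounds} on $-\int_0^{\tau_0}\Phi\tau^{1/2}\dd\tau$, the positivity of $\Phi$ on $(\tau_0,1)$ and at $\infty$, and the first identity $\int_0^\infty\Phi = 0$ (which controls $\int_0^1\Phi$ versus $\int_1^\infty\Phi$), one verifies that the target quantity $t \mapsto \int_t^1\Phi(\tau)\dd\tau$ is continuous and achieves all values in an interval containing $c(x)$ for small $x$, and hence $\lambda(x) \in (0,1)$ exists. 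The cleanest presentation is probably to \emph{define} $\lambda(x)$ directly by the intermediate value theorem from $\int_{\lambda(x)}^1\Phi(\tau)\dd\tau = g(x)^{-1}\int_0^1\Phi(\tau)\tau^{1/2}g(\tau x)\dd\tau$, which folds the sign bookkeeping above into a single application of continuity, and then absorb all the $O(x)$ and dominated-convergence errors into the $o(1)$ terms to arrive at \eqref{eq:asymptoticEquationForHomogeneousG}.
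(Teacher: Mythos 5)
Your outer steps — dividing \eqref{eq:condensedFormulationEven} by $x$, splitting at $x$ and $\nu$, controlling the tail with \Cref{lem:secondDifferenceProperties} and the $\delta_x^2R$ correction with \Cref{lem:remainderMayBeDisregarded} — agree with the paper, and your identification of $x^{-1}\int_0^x\delta_x^2 K(y)u(y)\dd y$ as the crux is correct. But your handling of that term has a genuine gap. You want $c(x)\ceq g(x)^{-1}\int_0^1\Phi(\tau)\tau^{1/2}g(\tau x)\dd\tau$ to lie in the range of $t\mapsto\int_t^1\Phi(\tau)\dd\tau$ over $[0,1]$, which is $\bigl[\int_0^1\Phi,\,\int_{\tau_0}^1\Phi\bigr]$ with strictly negative left endpoint $\int_0^1\Phi=-\int_1^\infty\Phi$. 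The bounds you propose involve $\sup_{(0,x]}g$ and $\inf_{(0,x]}g$, yielding at best $c(x)\geq\bigl(\int_0^{\tau_0}\Phi\tau^{1/2}\,\dd\tau\bigr)\tfrac{\sup g}{g(x)}+\bigl(\int_{\tau_0}^1\Phi\tau^{1/2}\,\dd\tau\bigr)\tfrac{\inf g}{g(x)}$, and the ratios $\sup g/g(x)$, $\inf g/g(x)$ are at this stage completely uncontrolled — controlling them, indeed even showing that $\sup g$ is finite, is precisely what this lemma is \emph{used for} in \Cref{lem:theLemmaProvidingTheInequalitiesForLimInfAndLimSupOfHomogeneousG}. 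So the required inequality $c(x)\geq\int_0^1\Phi$ does not follow from numerical estimates on the $\Phi$-integrals alone, and your proposed intermediate-value argument does not close.

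What closes the gap — and what the paper does — is the second mean value theorem for integrals (Bonnet's theorem), using the assumption in \Cref{ass:solutionU} that $u$ is increasing and nonnegative near $0$. Setting $v(\tau)\ceq u(\tau x)/x^{1/2}=\tau^{1/2}g(\tau x)$, which is nonnegative and nondecreasing with $v(0)=0$ and $v(1)=g(x)$, Bonnet's theorem gives $\int_0^1\Phi(\tau)v(\tau)\dd\tau=v(1)\int_{\lambda(x)}^1\Phi(\tau)\dd\tau$ for some $\lambda(x)\in[0,1]$, which is exactly the needed representation, exact in this part (no $o(1)$) and with no sign bookkeeping. (Equivalently, the paper applies the theorem to $\int_0^x\delta_x^2K(y)u(y)\dd y=u(x)\int_{\lambda(x)x}^x\delta_x^2K(y)\dd y$ before changing variables.) Its proof is essentially the averaging argument you gesture at: integration by parts writes $\int_0^1\Phi v=\int_0^1\bigl(\int_t^1\Phi\bigr)\dd v(t)$, a weighted average of $\int_{\wildcard}^1\Phi$ against the nonnegative measure $\dd v$ of total mass $g(x)$, so $c(x)$ is in the range of $\int_{\wildcard}^1\Phi$ by continuity. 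The monotonicity of $v$ — that is, the increasingness of $u$ — is indispensable here, and it is exactly what your sign-bookkeeping dropped. With this substitution, the rest of your argument goes through unchanged.
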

    \begin{proof}
        Dividing each side of \eqref{eq:condensedFormulationEven} by $x$, we find
        \[
            (1+n(u(x)))g(x)^2 = \frac{1}{x}\brak*{\int_0^x + \int_x^\nu + \int_\nu^\infty}\delta_x^2 K(y)u(y)\dd y,
        \]
        where we observe that the left-hand side is of the same form as in \eqref{eq:asymptoticEquationForHomogeneousG}. On the right-hand side, the third integrals are identical, while in the second we have simply used the definition of $g$ in \eqref{eq:homogeneousgDefinition} to write $u(y) = y^{1/2}g(y)$. The first integral requires more elaboration.

        Recall from \Cref{lem:propertiesOfHomogeneousPhi} that the singular part of $\delta_x^2 K(y)$ changes sign at $\tau_0 x$. As $u$ is increasing and nonnegative on $[0,\nu]$, we may still make use of the second mean value theorem for integrals on the first integral. Explicitly, we are able to conclude that, for every $x \in (0,\nu]$, we have
        \[
            \int_0^x \delta_x^2 K(y)u(y)\dd y = u(x)\int_{\lambda(x)x}^x \delta_x^2 K(y)\dd y.
        \]
        for some $\lambda(x) \in (0,1)$. Here, combining the identity \eqref{eq:identityForKernelHomogeneousCase} with \Cref{lem:remainderMayBeDisregarded}, we further have
        \[
            \int_{\lambda(x)x}^x \delta_x^2 K(y)\dd y = x^{1/2}\int_{\lambda(x)}^1 \Phi(\tau)\dd \tau + O(x^2),
        \]
        as $x \to 0$. Consequently, we find the first term in \eqref{eq:asymptoticEquationForHomogeneousG}.

        The positivity of the expression inside the square brackets in \eqref{eq:asymptoticEquationForHomogeneousG} for $y \in (x,\nu]$ is an immediate corollary of \Cref{lem:secondDifferenceProperties}, while the limit \eqref{eq:homogeneousAuxiliaryIntegralLimit} follows directly from \eqref{eq:identityForKernelHomogeneousCase} and \Cref{lem:remainderMayBeDisregarded}. Finally, by an argument identical to the one used to prove \eqref{eq:boundOnRemainderTermInLogarithmicCase}, we obtain \eqref{eq:boundOnRemainderTermIHomogeneousCase}.
    \end{proof}

    As we have alluded to, applying the arguments in the proof of \Cref{prop:logarithmicAsymptoticBehavior} to \eqref{eq:asymptoticEquationForHomogeneousG} will \emph{not} directly lead us to the desired limit for $g$. Instead, we will derive a system of two inequalities for the limits inferior and superior of $g$ at zero. As will be demonstrated in \Cref{prop:homogeneousAsymptoticBehavior}, these inequalities are in fact sharp enough to ensure the limit for $g$.

    \begin{lemma}\label{lem:theLemmaProvidingTheInequalitiesForLimInfAndLimSupOfHomogeneousG}
        With $g$ as in \eqref{eq:homogeneousgDefinition}, we have
        \[
            m\ceq\liminf_{x\to0}{g(x)} > 0 \quad \text{and} \quad M \ceq \limsup_{x\to0}{g(x)} < \infty,
        \]
        for which the inequalities
        \begin{align}
            \label{eq:refinedInequalityForLargeM}
            M^2 & \leq  m\int_0^{\tau_0} \Phi(\tau)\tau^{1/2}\dd \tau +  M\int_{\tau_0}^\infty \Phi(\tau)\tau^{1/2}\dd \tau,                   \\\label{eq:refinedInequalityForSmallM}
            m^2 & \geq \int_0^{\tau_0} \Phi(\tau)\min\parn[\big]{m,M\tau^{1/2}}\dd \tau + m\int_{\tau_0}^\infty \Phi(\tau) \tau^{1/2}\dd \tau,
        \end{align}
        hold. Here, $\Phi$ is as defined in \eqref{eq:homogeneousPhiDefinition}.
    \end{lemma}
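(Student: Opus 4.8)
The plan is to extract the two inequalities from the asymptotic equation \eqref{eq:asymptoticEquationForHomogeneousG} by evaluating it along sequences realising $M$ and $m$, and then using the sign-change of $\Phi$ at $\tau_0$ to bound the middle integral $\int_x^\nu[\cdots]g(y)\dd y$ from one side. First I would establish that $m>0$ and $M<\infty$; this is done exactly as in the proof of \Cref{prop:logarithmicAsymptoticBehavior}, by running the argument with the running minimum $\ul g(x)\ceq\min_{y\in[x,\nu]}g(y)$ and running maximum $\ol g$, using the nonnegativity of the square bracket and the final term in \eqref{eq:asymptoticEquationForHomogeneousG} together with \eqref{eq:homogeneousAuxiliaryIntegralLimit}, \eqref{eq:boundOnRemainderTermIHomogeneousCase} and the fact that $\int_{\lambda(x)}^1\Phi(\tau)\dd\tau$ is bounded (since $\Phi$ is locally integrable). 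Once $1/g$ is bounded near the origin, the $o(1)g(x)$ terms can be absorbed, and $g$ is bounded above.

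Next I would rewrite the middle integral after the change of variables $y=\tau x$, so that the integrand becomes $\Phi(\tau)\tau^{1/2}\,(u(\tau x)/(\tau x)^{1/2})+\text{(remainder from }\delta_x^2R)$, where the remainder is $O(x^{3/2})$ by \Cref{lem:remainderMayBeDisregarded}, hence negligible after dividing by $x$. The upper threshold $\nu/x\to\infty$ means the interval $(1,\nu/x)$ eventually covers all of $(1,\infty)$, and the tail beyond $\nu/x$ is controlled by \eqref{eq:homogeneousAuxiliaryIntegralLimit} and positivity, so in the limit the middle integral behaves like $\int_0^\infty\Phi(\tau)\tau^{1/2}g(\tau x)\dd\tau$ with the understanding that $g(\tau x)$ is evaluated at small arguments for fixed $\tau$ and at arguments that may not be small for $\tau$ comparable to $\nu/x$. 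For \eqref{eq:refinedInequalityForLargeM}: pick a sequence $x_k\to 0$ with $g(x_k)\to M$. On $(0,\tau_0)$ we have $\Phi<0$, so $\Phi(\tau)g(\tau x_k)\le\Phi(\tau)\liminf_{x\to0}g(x)=\Phi(\tau)m$ in the limit (using Fatou/reverse-Fatou with the sign); on $(\tau_0,\infty)$ we have $\Phi>0$, so $\Phi(\tau)g(\tau x_k)\le\Phi(\tau)M$ using $g\le M+o(1)$ uniformly on compacts and the uniform integrability supplied by the tail estimate. The first term $\int_{\lambda(x_k)}^1\Phi(\tau)\dd\tau\,g(x_k)$ is handled by noting $\int_{\lambda(x_k)}^1\Phi\le\int_0^1\Phi$ and then splitting $\int_0^1\Phi=\int_0^{\tau_0}\Phi+\int_{\tau_0}^1\Phi$; combining the $\int_{\tau_0}^1\Phi\cdot M$ piece with the $\int_{\tau_0}^\infty\Phi\tau^{1/2}\cdot M$ piece (noting $\tau^{1/2}\ge1$ there only helps) and the $\int_0^{\tau_0}\Phi\cdot m$-type pieces, one arrives at \eqref{eq:refinedInequalityFor LargeM}. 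The symmetric choice of a sequence realising $m$ yields \eqref{eq:refinedInequalityForSmallM}, except that on $(0,\tau_0)$ one now wants a \emph{lower} bound for $\Phi(\tau)g(\tau x_k)$, and here $g(\tau x_k)\ge m+o(1)$ for $\tau x_k$ small but one can do better: for $\tau$ in the bulk one uses $g(\tau x_k)\ge m$, while the refinement $\min(m,M\tau^{1/2})$ comes from also using the trivial bound $u(\tau x_k)=(\tau x_k)^{1/2}g(\tau x_k)\le\tau^{1/2}x_k^{1/2}M+\ldots$, i.e. $g(\tau x_k)\cdot\tau^{1/2}\le M\tau^{1/2}$ — wait, the relevant combination is that $\Phi(\tau)\tau^{1/2}g(\tau x_k)$ with $\Phi<0$ is bounded below by $\Phi(\tau)\min(m,M\tau^{1/2})$ once one writes $\tau^{1/2}g(\tau x)=u(\tau x)/(\tau x)^{1/2}\cdot\ldots$; more carefully, $\tau^{1/2}g(\tau x)\le\max(M\tau^{1/2},\ldots)$ is used to cap the negative contribution, giving $\min$ inside.

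The main obstacle I anticipate is making the passage to the limit in the middle integral rigorous when the argument $\tau x$ of $g$ is \emph{not} forced to be small — namely for $\tau$ near the upper endpoint $\nu/x$. One cannot simply say $g(\tau x)\to M$ or $m$ there. The fix is to choose $\nu$ small first (so that $g$ on $(0,\nu]$ stays within $o(1)$ of $[m,M]$ as $\nu\to0$ is not available, but boundedness is), split the integral at a fixed large $T$: on $(\tau_0,T)$ use pointwise convergence and dominated convergence; on $(T,\nu/x)$ use $\Phi(\tau)\tau^{1/2}\le C\tau^{-3/2}$ (from $\Phi(\tau)=\tfrac14\tau^{-5/2}+O(\tau^{-7/2})$, or crudely from convexity) together with boundedness of $g$, so that this piece is $\le C\int_T^\infty\tau^{-3/2}\sup g\,\dd\tau\to0$ as $T\to\infty$, uniformly in $x$; and on $(\nu/x,\infty)$ invoke \Cref{lem:secondDifferenceProperties} / the analogue of \eqref{eq:boundOnRemainderTermIHomogeneousCase}. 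A second, more delicate point is getting the \emph{sharp} constants: one must verify that the $\int_{\lambda(x)}^1\Phi$ factor multiplying $g(x)$ combines cleanly with the middle integral's $\int_{\tau_0}^1$ contribution — this is precisely where the identity $\int_0^\infty\Phi=0$ from \eqref{eq:PhiIntegrals} and the decomposition at $\tau_0$ are used, ensuring no constant is lost. Once these limiting manipulations are justified, the two displayed inequalities drop out directly; the extra sharpness encoded in the $\min(m,M\tau^{1/2})$ of \eqref{eq:refinedInequalityForSmallM} is what will later exclude the spurious wedge $M>m$ in \Cref{fig:inequalities}.
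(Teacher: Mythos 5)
Your overall strategy matches the paper's: establish $m>0$, $M<\infty$ via the running min/max $\underline g,\overline g$ along realising sequences; then exploit the sign change of $\Phi$ at $\tau_0$ to split the integral and pass to the limit as $\nu\to 0$. The first part is fine in outline, though note that the paper uses monotonicity of $\Phi$ on $(0,1)$ to bound $\int_{\lambda(x)}^1\Phi\,\mathrm d\tau$ sharply from below by $\int_0^1\Phi\,\mathrm d\tau$ (and above by $\int_{\tau_0}^1\Phi\,\mathrm d\tau$); mere ``boundedness'' of this term is not obviously enough to get a \emph{positive} lower bound for $m$ when you invoke $\int_1^\infty\Phi(\tau^{1/2}-1)\,\mathrm d\tau>0$.

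The genuine gap is in the derivation of the refinement \eqref{eq:refinedInequalityForSmallM}, which you correctly identify as the delicate part but do not actually nail down. The key mechanism the paper uses is that $u$ is \emph{increasing} on $(0,\nu]$, which gives $\tau^{1/2}g(\tau x)=u(\tau x)/x^{1/2}\le u(x)/x^{1/2}=g(x)$ for every $\tau\in(0,1)$. Combined with the trivial $\tau^{1/2}g(\tau x)\le\tau^{1/2}\sup_{(0,\nu]}g$, this yields $\tau^{1/2}g(\tau x)\le\min\bigl(g(x),\tau^{1/2}\sup g\bigr)$, and since $\Phi<0$ on $(0,\tau_0)$, this upper bound converts to the lower bound $\Phi(\tau)\tau^{1/2}g(\tau x)\ge\Phi(\tau)\min(g(x),\tau^{1/2}\sup g)$ under the integral sign. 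Taking a realising sequence for $m$ and then $\nu\to 0$ gives $\Phi(\tau)\min(m,M\tau^{1/2})$ inside. Your proposal never states this monotonicity input and in fact at one point writes ``$g(\tau x_k)\ge m$'' for the bulk, which is in the \emph{wrong direction}: since $\Phi<0$ there, you need an \emph{upper} bound on $\tau^{1/2}g(\tau x)$, and the quantity that gives the $m$ in the $\min$ is precisely the monotonicity-induced bound $\tau^{1/2}g(\tau x)\le g(x_k)\to m$, not $g(\tau x_k)\ge m$. Without this the refined inequality cannot be obtained, and as the paper's remark after the lemma points out, the unrefined version is too weak to conclude \Cref{prop:homogeneousAsymptoticBehavior}.

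A smaller point: your worries about passing to the limit in the middle integral and the bookkeeping for $\int_{\lambda(x)}^1\Phi\,\mathrm d\tau$ dissolve once $g$ is known to be bounded, because the equation can then be recast directly as $g(x)^2=\int_0^{\nu/x}\Phi(\tau)\tau^{1/2}g(\tau x)\,\mathrm d\tau+o(1)$ (the $\lambda(x)$ was only needed to control the first piece before boundedness was known). From that simpler form the inequalities fall out by replacing $g(\tau x)$ with $\inf g$ or $\sup g$ according to the sign of $\Phi$; no Fatou-type argument or split at a large $T$ is required.
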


    \begin{remark}
        Compare \eqref{eq:refinedInequalityForLargeM} and \eqref{eq:refinedInequalityForSmallM} with the more symmetric \eqref{eq:symmetric_inequalities} that was covered in \Cref{sec:overview}. Without the refinement of \eqref{eq:refinedInequalityForSmallM} over the second inequality in \eqref{eq:symmetric_inequalities}, the system would be too weak to reach the conclusion of \Cref{prop:homogeneousAsymptoticBehavior}.
    \end{remark}
    \begin{proof}
        We first prove that $m>0$. Proceeding as in \Cref{prop:logarithmicAsymptoticBehavior}, we deduce from \eqref{eq:asymptoticEquationForHomogeneousG} that
        \begin{equation}
            \label{eq:lowerAsymptoticEquationHomogeneous}
            (1+o(1))g(x)^2\geq \parn[\bigg]{\int_{0}^1\Phi(\tau)\dd \tau+o(1)}g(x)+\ul{g}(x)\int_x^{\nu}\brak[\Bigg]{\frac{\delta_x^2K(y)y^{1/2}}{x}}\dd y
        \end{equation}
        as $x \to 0$. Here, $\ul{g}$ is again defined according to \eqref{eq:ulg}. To bound the integral below, we have used the monotonicity of $\Phi$ on $(0,1)$, along with $\int_0^1 \Phi(\tau)\dd \tau < 0$; both from \Cref{lem:propertiesOfHomogeneousPhi}.

        Assuming that $m=0$, we may pick a realising sequence $\{x_k\}_{k \in \N} \subset (0,\nu]$ for $m$, in such a way that $g = \ul{g}$ along the sequence. Then \eqref{eq:lowerAsymptoticEquationHomogeneous} reduces to
        \[
            (1+o(1))g(x_k) \geq \int_0^1 \Phi(\tau) \dd \tau + \int_{x_k}^{\nu}\brak[\Bigg]{\frac{\delta_{x_k}^2K(y)y^{1/2}}{x_k}}\dd y + o(1)
        \]
        as $k \to \infty$, after having divided by $g(x_k)$. Going to the limit, we obtain
        \[
            m \geq \int_0^1 \Phi(\tau)\dd \tau + \int_1^\infty \Phi(\tau)\tau^{1/2}\dd \tau = \int_1^\infty \Phi(\tau)(\tau^{1/2}-1) > 0,
        \]
        where the equality comes from the first integral in \eqref{eq:PhiIntegrals}. Meanwhile, the final inequality stems from positivity of the integrand on $(1,\infty)$, which is part of \Cref{lem:propertiesOfHomogeneousPhi}. Regardless, this is a contradiction, so $m > 0$.

        Similarly, arguing like for \eqref{eq:upperAsymptoticEquationLogarithmic}, one has
        \[
            (1+o(1))g(x)^2\leq \parn[\bigg]{\int_{\tau_0}^1\Phi(\tau)\dd \tau+o(1)}g(x)+\ol{g}(x)\int_x^{\nu}\brak[\Bigg]{\frac{\delta_x^2K(y)y^{1/2}}{x}}\dd y,
        \]
        as $x \to 0$. Assuming that $M = \infty$, we are again able to choose a realising sequence along which $g = \ol{g}$. This results in the contradiction
        \[
            M \leq \int_{\tau_0}^1 \Phi(\tau)\dd \tau + \int_1^\infty \Phi(\tau)\tau^{1/2}\dd \tau < \infty,
        \]
        and so we do in fact have $M < \infty$.

        Armed with the knowledge that $0<m\leq M <\infty$, we may now derive the sharper inequalities \eqref{eq:refinedInequalityForLargeM} and \eqref{eq:refinedInequalityForSmallM}: Knowing that $g$ is bounded, \eqref{eq:asymptoticEquationForHomogeneousG} can be replaced with the simpler
        \begin{equation}
            \label{eq:simpleEquationForHomogeneousG}
            g(x)^2 = \int_0^{\nu/x} \Phi(\tau) \tau^{1/2} g(\tau x)\dd \tau + o(1)
        \end{equation}
        as $x \to 0$, by employing \Cref{lem:secondDifferenceProperties,lem:remainderMayBeDisregarded}. Since we recall from \Cref{lem:propertiesOfHomogeneousPhi} that $\Phi$ is negative on $(0,\tau_0)$, and positive on $(\tau_0,\infty)$, we therefore see that
        \[
            M^2 \leq \parn[\bigg]{\inf_{y \in (0,\nu]}{g(y)}}\int_0^{\tau_0}\Phi(\tau)\tau^{1/2}\dd \tau + \parn[\bigg]{\sup_{y \in (0,\nu]}{g(y)}}\int_{\tau_0}^\infty\Phi(\tau)\tau^{1/2}\dd \tau,
        \]
        which yields \eqref{eq:refinedInequalityForLargeM} in the limit $\nu \to 0$.

        In order to establish \eqref{eq:refinedInequalityForSmallM}, we note that, since $u$ is increasing on $(0,\nu]$, we have
        \[
            \tau^{1/2}g(\tau x)=\frac{u(\tau x)}{x^{1/2}}\leq \frac{u(x)}{x^{1/2}} = g(x)
        \]
        for every $\tau \in(0,1)$ and $x \in(0,\nu]$. Thus
        \[
            \tau^{1/2}g(\tau x) \leq \min\parn[\bigg]{g(x),\tau^{1/2}\parn[\bigg]{\sup_{y \in (0,\nu]}{g(y)}}}
        \]
        for all such $x,\tau$, and the lower bound
        \[
            g(x)^2 \geq \int_0^{\tau_0} \Phi(\tau)\min\parn[\bigg]{g(x),\tau^{1/2}\parn[\bigg]{\sup_{y \in (0,\nu]}{g(y)}}}\dd \tau + \parn[\bigg]{\inf_{y \in (0,\nu]}{g(y)}} \int_{\tau_0}^\infty\Phi(\tau)\tau^{1/2}\dd \tau +o(1)
        \]
        as $x\to 0$, is therefore obtained from \eqref{eq:simpleEquationForHomogeneousG}. Finally, we are left with \eqref{eq:refinedInequalityForSmallM} after first taking the limit along a sequence realising $m$, and subsequently letting $\nu \to 0$.
    \end{proof}
    While the inequalities \eqref{eq:refinedInequalityForLargeM} and \eqref{eq:refinedInequalityForSmallM} are more involved than the corresponding inequalities found in the logarithmic case \eqref{eq:inequalitiesForLimInfAndLimSupOfLogarithmicG}, it just so happens that the \emph{only} point $(m,M)\in \R^+\times \R^+$ that satisfies both \eqref{eq:refinedInequalityForLargeM}, \eqref{eq:refinedInequalityForSmallM}, and $m\leq M$, is the one given by $m=M=\pi/2$. We now prove this, resulting in the third limit of \hyperref[thm:mainTheorem]{Main Theorem}.

    \begin{proposition}
        \label{prop:homogeneousAsymptoticBehavior}
        Under \Cref{ass:integrabilitySymmetryAndConvexity,ass:nonlinearity,ass:solutionU}, the solution enjoys the limit
        \[
            \lim_{x \to 0} \frac{u(x)}{x^{1/2}} = \frac{\pi}{2}
        \]
        when $K$ has a homogeneous singularity.
    \end{proposition}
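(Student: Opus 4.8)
The plan is to push the two inequalities of \Cref{lem:theLemmaProvidingTheInequalitiesForLimInfAndLimSupOfHomogeneousG} as far as they go, using the explicit information on $\Phi$ from \Cref{lem:propertiesOfHomogeneousPhi}, and to conclude that $\parn*{\tfrac\pi2,\tfrac\pi2}$ is the only pair $(m,M)$ with $0<m\le M$ satisfying both. Set $b\ceq-\int_0^{\tau_0}\Phi(\tau)\tau^{1/2}\dd\tau$ and $a\ceq\int_{\tau_0}^\infty\Phi(\tau)\tau^{1/2}\dd\tau$, so that $a=\tfrac\pi2+b$ by \eqref{eq:PhiIntegrals} and $\tfrac12<b<\tfrac35$ by \eqref{eq:bBounds}. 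Then \eqref{eq:refinedInequalityForLargeM} reads $M^2\le aM-bm$, which in particular forces $M<a$, while \eqref{eq:refinedInequalityForSmallM} implies at least the weaker bound $m^2\ge am-bM$, since $\min\parn{m,M\tau^{1/2}}\le M\tau^{1/2}$ and $\Phi<0$ on $(0,\tau_0)$.

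I would first dispose of the case $m=M$. On $(0,\tau_0)$ one has $\tau^{1/2}<1$, so there $\min\parn{m,M\tau^{1/2}}=M\tau^{1/2}$, and both inequalities collapse to $M^2\le(a-b)M=\tfrac\pi2 M$ and $m^2\ge(a-b)m=\tfrac\pi2 m$, forcing $m=M=\tfrac\pi2$. It then remains to rule out $m<M$. Subtracting the quadratic inequalities $M^2-aM+bm\le0$ and $m^2-am+bM\ge0$ yields $(M-m)(M+m-a-b)\le0$, hence $M+m\le a+b$, and in particular $m<\tfrac12(a+b)$.

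The argument now splits according to whether $m\ge M\tau_0^{1/2}$ or $m<M\tau_0^{1/2}$. If $m\ge M\tau_0^{1/2}$, then $\min\parn{m,M\tau^{1/2}}=M\tau^{1/2}$ on all of $(0,\tau_0)$, so \eqref{eq:refinedInequalityForSmallM} is no stronger than $m(a-m)\le bM$; combined with $M\le m/\tau_0^{1/2}$ this gives $m\ge a-b/\tau_0^{1/2}$. On the other hand, combining $m(a-m)\le bM$ with $M\le a+b-m$ produces $m^2-(a+b)m+b(a+b)\ge0$, and since $m$ lies strictly to the left of the midpoint $\tfrac12(a+b)$ of the two roots, it must satisfy $m\le\tfrac12\parn[\big]{(a+b)-\sqrt{(a+b)(a-3b)}}$. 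A short computation using only $\tfrac12<b<\tfrac35$ and $\tfrac12<\tau_0<\tfrac23$ shows that $a-b/\tau_0^{1/2}>\tfrac{13}{10}$ whereas this smaller root stays below $\tfrac9{10}$, a contradiction.

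The case $m<M\tau_0^{1/2}$ is the crux, and is exactly where the refinement in \eqref{eq:refinedInequalityForSmallM} over the symmetric \eqref{eq:symmetric_inequalities} is indispensable: the weaker system leaves the wedge $\{M>m\}$ visible in \Cref{fig:inequalities} intact. Here $\tau_\ast\ceq(m/M)^2<\tau_0$ and $\min\parn{m,M\tau^{1/2}}$ equals $M\tau^{1/2}$ on $(0,\tau_\ast)$ and $m$ on $(\tau_\ast,\tau_0)$, so \eqref{eq:refinedInequalityForSmallM} becomes
\[
    m^2\ge M\int_0^{\tau_\ast}\Phi(\tau)\tau^{1/2}\dd\tau+m\int_{\tau_\ast}^{\tau_0}\Phi(\tau)\dd\tau+am.
\]
Writing $m=M\tau_\ast^{1/2}$, dividing by $M$, and evaluating the two integrals through their closed forms (equation \eqref{eq:bDefinition} together with $\int_0^t\Phi(\tau)\dd\tau=2\parn[\big]{(1+t)^{1/2}-(1-t)^{1/2}-2t^{1/2}}$) turns the inequality into a lower bound for $M$ in terms of $\tau_\ast$, which I would compare with the upper bound $M\le a-b\tau_\ast^{1/2}$ coming from $M^2\le aM-bm$. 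The content of this case is then a single-variable inequality asserting that the former strictly exceeds the latter for every $\tau_\ast\in(0,\tau_0)$; geometrically, this is the statement that the refined (dotted) curve of \Cref{fig:inequalities} lies strictly above the diagonal except at $\parn*{\tfrac\pi2,\tfrac\pi2}$. I expect this step to be the main obstacle: the comparison is only barely true, and is delicate near the endpoint $\tau_\ast=\tau_0$, so one must sharpen the explicit bounds on $b$ and $\tau_0$ from \Cref{lem:propertiesOfHomogeneousPhi} (for instance by evaluating $\Phi$ and the primitive \eqref{eq:bDefinition} at a few more points) to retain enough room for the estimate to close.
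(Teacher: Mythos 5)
You start from the same place as the paper, namely the pair of inequalities in \Cref{lem:theLemmaProvidingTheInequalitiesForLimInfAndLimSupOfHomogeneousG}, and you dispose of the boundary case $m=M$ in the same way. Your handling of the regime $m\ge M\tau_0^{1/2}$ (equivalently $\sigma\ceq M/m\le\tau_0^{-1/2}$) is a genuine, if slightly different, argument: you squeeze $m$ between two numerical bounds that are incompatible under the crude estimates $\tfrac12<b<\tfrac35$ and $\tfrac12<\tau_0<\tfrac23$, and your arithmetic there checks out. The paper instead handles the low-$\sigma$ range by bounding the difference $f(\sigma)$ of the two right-hand sides below by $(1-\sigma^{-1})(\pi/2-b(\sigma+\sigma^{-1}))$; the two arguments are roughly comparable in spirit.

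The crucial regime $m<M\tau_0^{1/2}$, however, is exactly where you stop. You set up the refined inequality with the kink at $\tau_\ast=(m/M)^2$, propose to compare the resulting lower bound for $M$ against the upper bound $M\le a-b\tau_\ast^{1/2}$ by evaluating the closed-form primitives, and then explicitly say you expect this to be the main obstacle, that it is ``only barely true'', and that one would need to tighten the estimates on $b$ and $\tau_0$. That is a concession that the proof is not finished, and it is precisely the step that separates the refined inequality from the symmetric system \eqref{eq:symmetric_inequalities}; without it, the wedge $\{M>m\}$ in \Cref{fig:inequalities} is not excluded and the proposition does not follow. In the paper, the corresponding range $\sigma>\sqrt{2}$ is \emph{not} handled by comparing explicit primitives at all. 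Instead one differentiates the difference function $f(\sigma)$ (which is possible once $\sigma^{-2}<\tau_0$, so the $\min$ splits cleanly) and uses the pointwise convexity bound $\Phi(\tau)\tau^{1/2}\ge 2\tau^{1/2}-2$ to show $f'(\sigma)>0$; together with positivity of $f$ on $(1,2]$ this gives $f>0$ globally. That monotonicity idea is the piece your outline is missing, and it is what makes the tight comparison tractable without needing sharper numerical bounds on $b$ and $\tau_0$ than those already supplied by \Cref{lem:propertiesOfHomogeneousPhi}.
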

    \begin{proof}
        With $M$ and $m$ as in \Cref{lem:theLemmaProvidingTheInequalitiesForLimInfAndLimSupOfHomogeneousG}, we first introduce $\sigma\ceq M/m\geq 1$, and rewrite \eqref{eq:refinedInequalityForLargeM} and \eqref{eq:refinedInequalityForSmallM} purely in terms of $\sigma$ and $m$:
        \begin{align}
            \label{eq:mRefinedUpperBoundAfterSubstitutingInSigma}
            m & \leq \sigma^{-2} \int_0^{\tau_0} \Phi(\tau)\tau^{1/2}\dd \tau + \sigma^{-1} \int_{\tau_0}^\infty \Phi(\tau)\tau^{1/2}\dd \tau, \\
            \label{eq:mRefinedLowerBoundAfterSubstitutingInSigma}
            m & \geq \int_0^{\tau_0} \Phi(\tau)\min(1,\sigma \tau^{1/2})\dd \tau + \int_{\tau_0}^\infty \Phi(\tau) \tau^{1/2}\dd \tau.
        \end{align}

        When $\sigma=1$, both right-hand sides read $\int_0^\infty\Phi(y)y^{1/2}\dd y=\pi/2$ by \Cref{lem:propertiesOfHomogeneousPhi}, and so $\pi/2=m = M/\sigma=M$. We will therefore be done once we are able to show that no $m>0$ simultaneously satisfies both \eqref{eq:mRefinedUpperBoundAfterSubstitutingInSigma} and \eqref{eq:mRefinedLowerBoundAfterSubstitutingInSigma} when $\sigma>1$. To that end, we introduce
        \begin{equation}
            \label{eq:fDefinition}
            f(\sigma) \ceq \int_0^{\tau_0} \Phi(\tau)\min(1,\sigma \tau^{1/2})\dd \tau + \sigma^{-2}b + \parn*{\frac{\pi}{2}+b}(1-\sigma^{-1})
        \end{equation}
        for $\sigma \geq 1$, where
        \[
            b\ceq -\int_0^{\tau_0}\Phi(\tau)\tau^{1/2}\dd \tau
        \]
        is a positive constant. As $\int_{\tau_0}^\infty \Phi(\tau)\tau^{1/2}\dd \tau=\pi/2+ b$, it is not difficult to see that $f(\sigma)$ is precisely the right-hand side of \eqref{eq:mRefinedLowerBoundAfterSubstitutingInSigma} \emph{minus} that of \eqref{eq:mRefinedUpperBoundAfterSubstitutingInSigma}. Hence, if we can demonstrate that $f$ is positive on $(1,\infty)$, then there is no simultaneous solution to \eqref{eq:mRefinedUpperBoundAfterSubstitutingInSigma} and \eqref{eq:mRefinedLowerBoundAfterSubstitutingInSigma}; thereby completing the proof

        Since $\Phi$ is negative on $(0,\tau_0)$ by \Cref{lem:propertiesOfHomogeneousPhi}, we may use the trivial inequality $\min(1,\sigma \tau^{1/2}) \leq \sigma \tau^{1/2}$ to see that
        \begin{align*}
            f(\sigma) & \geq -\sigma b +\sigma^{-2}b + \parn*{\frac{\pi}{2}+b}(1-\sigma^{-1}) \\
                      & =(1-\sigma^{-1})\parn*{\frac{\pi}{2}-b(\sigma+\sigma^{-1})}
        \end{align*}
        for all $\sigma \geq 1$. Note that the first of the two factors is positive for all $\sigma > 1$, while the second factor is a decreasing function of $\sigma$. As $b<3/5$ by \eqref{eq:bBounds}, we further have
        \[
            f(2) \geq \frac{\pi - 3}{4} > 0,
        \]
        and thus conclude that $f(\sigma) > 0$ on $(1,2]$.

        Suppose finally that $\sigma > 2^{1/2}$. Then $\sigma^{-2} < 1/2< \tau_0$, by \Cref{lem:propertiesOfHomogeneousPhi}, so that
        \[
            \int_0^{\tau_0} \Phi(\tau)\min(1,\sigma \tau^{1/2})\dd \tau = \sigma \int_0^{\sigma^{-2}} \Phi(\tau)\tau^{1/2}\dd \tau + \int_{\sigma^{-2}}^{\tau_0} \Phi(\tau)\dd \tau,
        \]
        which we in turn can use in \eqref{eq:fDefinition} to compute that
        \[
            f'(\sigma) = \int_0^{\sigma^{-2}}\Phi(\tau)\tau^{1/2}\dd \tau - 2\sigma^{-3}b + \parn*{\frac{\pi}{2}+b}\sigma^{-2},
        \]
        for all $\sigma>2^{1/2}$. Here,
        \[
            \Phi(\tau)\tau^{1/2} = \parn*{\frac{1}{(1+\tau)^{1/2}}+\frac{1}{(1-\tau)^{1/2}}}\tau^{1/2} - 2 \geq 2\tau^{1/2}-2
        \]
        by \eqref{eq:homogeneousPhiDefinition} and the convexity of $H(\tau) = \abs{\tau}^{-1/2}$ on $\R^+$. Therefore, we infer that
        \begin{align*}
            f'(\sigma) & \geq 2\int_0^{\sigma^{-2}}(\tau^{1/2}-1)\dd \tau - 2\sigma^{-3}b + \parn*{\frac{\pi}{2}+b}\sigma^{-2} \\
                       & =\parn*{\frac{\pi}{2}+b-2}\sigma^{-2} + \parn*{\frac{4}{3}-2b}\sigma^{-3}                             \\
                       & >\frac{\pi-3}{2}\sigma^{-2} + \frac{2}{15}\sigma^{-3},
        \end{align*}
        for all $\sigma>2^{1/2}$, by using the bounds on $b$ provided by \eqref{eq:bBounds}. In conclusion, $f$ is increasing on $(2^{1/2},\infty)$, and therefore positive on $(1,\infty)$; seeing as it is positive on $(1,2]$.
    \end{proof}

    \subsection{The limit for the derivative}

        We shall now prove the final limit of \hyperref[thm:mainTheorem]{Main Theorem}. Whereas we in \Cref{ssec:derivativeLimitLogarithmic} first proved the limit, and then obtained uniform regularity via \Cref{lem:localModulus}, we will here prove a sharper form of Hölder regularity first. This regularity is then used to establish the limit. These two approaches are complementary. The counterpart to \eqref{eq:identityForLogarithmicDerivative} in this case is still useful, and becomes
        \[
            \delta_{2x}K(\tau x) = \delta_{2x}H(\tau x) + \delta_{2x}R(\tau x) = -\Gamma(\tau) + \delta_{2x}R(\tau x),
        \]
        where
        \begin{equation}
            \label{eq:gamma}
            \Gamma(\tau) \ceq \frac{1}{\abs{\tau-1}^{1/2}}-\frac{1}{(\tau+1)^{1/2}}
        \end{equation}
        is also positive on $\R^+$.

        To illustrate the idea, if one \emph{formally} differentiates the toy equation in \eqref{eq:homogeneousToyEquation}, then
        \begin{align*}
            2\parn*{\frac{u(x)}{x^{1/2}}}x^{1/2}u'(x) & = \pv{\int_0^\infty \delta_{2x}H'(y)u(y)\dd y}                                               \\
                                                      & = \pv{\int_0^\infty \Gamma'(\tau)}\tau^{1/2}\parn*{\frac{u(\tau x)}{(\tau x)^{1/2}}}\dd \tau
        \end{align*}
        for all $x > 0$. In principle, it should therefore be the case that
        \begin{equation}
            \label{eq:derivativeLimitPrincipalValue}
            \lim_{x \to 0}{x^{1/2}u'(x)} =  \frac{1}{2}\pv{\int_0^\infty\Gamma'(\tau)\tau^{1/2} \dd \tau}
        \end{equation}
        because of \Cref{prop:homogeneousAsymptoticBehavior}. This principal value integral can be shown to, in fact, equal $\pi/2$, so we find the ``correct'' limit. Of course, to rigorously justify this computation, especially for the full equation, we need to work harder.

        \begin{lemma}\label{lemma:Holder}
            There is some $\nu > 0$ so that
            \begin{equation}
                \label{eq:sharperHolderBound}
                x^{1/2}(u(x+h)-u(x-h)) \lesssim h
            \end{equation}
            for all $0 \leq h \leq x \leq \nu$. As a consequence, $u$ is $C^{1/2}$-Hölder continuous on $[0,\nu]$, and
            \[
                u'(x) \lesssim x^{-1/2}
            \]
            for $x \in (0,\nu]$.
        \end{lemma}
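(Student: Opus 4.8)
The bound \eqref{eq:sharperHolderBound} is, up to shrinking $\nu$, \emph{equivalent} to the estimate $u'(x)\lesssim x^{-1/2}$ near the origin: dividing \eqref{eq:sharperHolderBound} by $2h$ and letting $h\searrow 0$ gives one implication, while if $u'(s)\lesssim s^{-1/2}$ for small $s>0$ then for $0\le h\le x$
\[
    u(x+h)-u(x-h) = \int_{x-h}^{x+h}u'(s)\dd s \lesssim (x+h)^{1/2}-(x-h)^{1/2} = \frac{2h}{(x+h)^{1/2}+(x-h)^{1/2}} \le \frac{2h}{x^{1/2}},
\]
which is \eqref{eq:sharperHolderBound}; the same computation with arbitrary endpoints yields $\abs{u(a)-u(b)}\lesssim\abs{a^{1/2}-b^{1/2}}\le\abs{a-b}^{1/2}$, i.e. the $C^{1/2}$-statement. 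So it suffices to prove $u'(x)\lesssim x^{-1/2}$ near $0$.

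The plan is to extract an asymptotic equation for $\eta(x)\ceq x^{1/2}u'(x)$ and combine it with the global information already available from \Cref{prop:homogeneousAsymptoticBehavior}. Rewriting \eqref{eq:condensedFormulationEven} via the antiderivative $\mc K$ of \eqref{eq:K_antiderivative}, integrating by parts, and differentiating in $x$ exactly as in the derivation of \eqref{eq:condensedFormulationEvenDifferentiated} gives
\[
    2u(x)u'(x)\bigl(1+\tilde n(u(x))\bigr) = \delta_{2x}K(\nu)u(\nu) - \int_0^\nu\delta_{2x}K(y)u'(y)\dd y + \int_\nu^\infty\delta_{2x}K'(y)u(y)\dd y
\]
with $\tilde n(t) = n(t)+\tfrac12 t n'(t)$. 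The boundary and tail terms are $O(x)$ by the monotonicity and convexity of $K$ on $\R^+$ (cf. \Cref{lem:secondDifferenceProperties}), and the $R$-part of the remaining integral is $O(x)$ by \Cref{ass:integrabilitySymmetryAndConvexity}; in its singular part the substitution $y=\tau x$ produces the function $\Gamma$ from \eqref{eq:gamma}, and invoking \Cref{prop:homogeneousAsymptoticBehavior} together with $n(0)=0$, $n\in C^1$, turns the whole identity into
\[
    (\pi+o(1))\,\eta(x) = \int_0^{\nu/x}\Gamma(\tau)\,\tau^{-1/2}\,\eta(\tau x)\dd\tau + o(1)
\]
as $x\to 0$. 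Now $\int_0^\infty\Gamma(\tau)\tau^{-1/2}\dd\tau = \pi$ — exactly the constant on the left — so this equation is \emph{resonant}: just as for the naive inequalities discussed before \Cref{prop:homogeneousAsymptoticBehavior}, it does not by itself bound $\eta$, and one must also use the companion relation $\int_0^x\eta(s)s^{-1/2}\dd s = \int_0^x u'(s)\dd s = u(x) = (\tfrac\pi2+o(1))x^{1/2}$, where the last step is again \Cref{prop:homogeneousAsymptoticBehavior}.

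With these two ingredients I would argue by contradiction in the spirit of the proof of \Cref{prop:logarithmicDerivativeAsymptoticBehavior}. Assume $\eta$ is unbounded near $0$, and let $\bar x=\bar x(x)$ be the leftmost point of $[x,\nu]$ where $\eta$ attains its maximum over $[x,\nu]$; then $\bar x(x)\to0$ and $\eta(\bar x(x))=\max_{[x,\nu]}\eta\to\infty$. Evaluating the resonant equation at $\bar x$ and splitting the integral at $\tau=x/\bar x$, the outer part ($\tau\ge x/\bar x$, so $\tau\bar x\in[x,\nu]$ and $\eta(\tau\bar x)\le\eta(\bar x)$) is at most $\eta(\bar x)\int_{x/\bar x}^\infty\Gamma(\tau)\tau^{-1/2}\dd\tau$, while the inner part, after substituting back and using that $\Gamma$ is increasing on $(0,1)$, is at most $\Gamma(x/\bar x)\,\bar x^{-1/2}\int_0^x u'(s)\dd s = (\tfrac\pi2+o(1))\,\Gamma(x/\bar x)(x/\bar x)^{1/2}$; since $\pi-\int_{x/\bar x}^\infty\Gamma(\tau)\tau^{-1/2}\dd\tau = \int_0^{x/\bar x}\Gamma(\tau)\tau^{-1/2}\dd\tau$, this yields
\[
    \Bigl(\int_0^{x/\bar x}\Gamma(\tau)\tau^{-1/2}\dd\tau + o(1)\Bigr)\eta(\bar x) \le \Bigl(\tfrac\pi2+o(1)\Bigr)\Gamma\bigl(x/\bar x\bigr)\bigl(x/\bar x\bigr)^{1/2} + o(1).
\]
The map $\rho\mapsto\Gamma(\rho)\rho^{1/2}/\!\int_0^\rho\Gamma(\tau)\tau^{-1/2}\dd\tau$ is bounded on each interval $[0,1-\delta]$ — it tends to $\tfrac34\pi$ as $\rho\searrow0$ and blows up only as $\rho\nearrow1$ — so if one can choose test points $x\to0$ along which $x/\bar x(x)$ does not approach $1$, then $\eta(\bar x)=O(1)$, contradicting $\eta(\bar x)\to\infty$. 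Ruling out the contrary possibility is where a little care is needed: if $x/\bar x(x)\to1$ then $\bar x(x)\sim x$, which forces $\eta$ to be essentially nonincreasing towards $0$, and then the companion relation alone finishes the job (via $\int_0^x\eta(s)s^{-1/2}\dd s\gtrsim\eta(x)x^{1/2}$). The main difficulty is exactly this resonance — a direct bootstrap on $\sup_x x^{1/2}u'(x)$ from the integral equation fails because the relevant constants balance precisely against $\pi$ — together with the attendant delicacy near $\tau=1$, where $\Gamma$ is singular and the inner estimate degrades just as $\bar x$ approaches $x$.
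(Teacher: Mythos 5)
Your reduction of the Hölder bound \eqref{eq:sharperHolderBound} to the pointwise derivative bound $u'(x)\lesssim x^{-1/2}$, and the derivation of the asymptotic equation $(\pi+o(1))\,\eta(x)=\int_0^{\nu/x}\Gamma(\tau)\tau^{-1/2}\eta(\tau x)\dd\tau+o(1)$ for $\eta(x)\ceq x^{1/2}u'(x)$, with $\int_0^\infty\Gamma(\tau)\tau^{-1/2}\dd\tau=\pi$, are both correct. The diagnosis of this equation as \emph{resonant} is the right one, and it is precisely why the homogeneous derivative is delicate in a way the logarithmic derivative (where the analogous near-origin weight integrates to $o(1)$) is not.

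But the contradiction argument is not complete. Your bound on the inner part of the integral is $(\pi/2+o(1))\Gamma(\rho)\rho^{1/2}$ with $\rho=x/\ol{x}(x)$, and the resulting estimate is
\[
\Bigl(\int_0^{\rho}\Gamma(\tau)\tau^{-1/2}\dd\tau + o(1)\Bigr)\eta(\ol{x}) \leq \Bigl(\tfrac{\pi}{2}+o(1)\Bigr)\Gamma(\rho)\rho^{1/2}+o(1);
\]
when $\rho\to1$ the left factor stays bounded away from zero while the right-hand side grows like $(1-\rho)^{-1/2}$, so one obtains only $\eta(\ol{x})\lesssim(1-\rho)^{-1/2}$, which is perfectly compatible with $\eta(\ol{x})\to\infty$. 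The claim that $x/\ol{x}(x)\to1$ \enquote{forces $\eta$ to be essentially nonincreasing} is not justified: the leftmost maximiser $\ol{x}(x)$ of $\eta$ over $[x,\nu]$ can sit arbitrarily close to $x$ while $\eta$ still spikes on very short intervals at each scale, in which case the companion relation $\int_0^x\eta(s)s^{-1/2}\dd s=(\tfrac{\pi}{2}+o(1))x^{1/2}$ gives no useful lower bound on the integral in terms of $\eta(x)$, and no contradiction results. This is the one case that actually needs to be ruled out, and it is where the resonance bites. (Incidentally, the ratio $\Gamma(\rho)\rho^{1/2}/\int_0^\rho\Gamma(\tau)\tau^{-1/2}\dd\tau$ tends to $3/2$, not $3\pi/4$, as $\rho\searrow 0$; this arithmetic slip does not affect the structure of your argument.)

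The paper sidesteps the resonance entirely by not studying $u'$ directly. It starts from the double symmetrisation formula \eqref{eq:doubleSymmetrisation}, introduces the Hölder-type quantity $C(\alpha)\ceq\sup_{0<h<x\leq\nu}x^\alpha\abs{\delta_{2h}u(x)}/h^{2\alpha}$, and proves a self-improving bound $C(1/4+\alpha/2)\lesssim C(\alpha)^{1/2}+1$ with an implicit constant that is \emph{uniform} in $\alpha$. Starting from $C(0)<\infty$, the map $\alpha\mapsto 1/4+\alpha/2$ converges to $1/2$, the uniformity yields a bound on $C(\alpha)$ independent of $\alpha<1/2$, and left-continuity gives $C(1/2)<\infty$, which is exactly \eqref{eq:sharperHolderBound}. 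Because the bootstrap constants never have to be sharp, the resonant balance never appears. If you want to push your approach through instead, you would need a genuinely new idea to handle the $\rho\to1$ regime; invoking the companion relation as stated does not suffice.
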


        \begin{proof}
            This proof is a variant of the proof of global regularity given for the Whitham equation in \cite{Ehrnstroem19Whithams}, but adapted to obtain more information than just Hölder continuity. The aim is to build up regularity by applying a a bootstrap argument to \eqref{eq:condensedFormulationEven}. We begin by noting that, if we introduce the notation $N(t) \ceq (1+n(t))t^2$ for the nonlinearity on the left-hand side of \eqref{eq:condensedFormulationEven}, and for simplicity extend $u$ to an even function on $\R$, then
            \begin{equation}
                \label{eq:doubleSymmetrisation}
                \begin{aligned}
                    N(u(x+h))-N(u(x-h)) & = \int_0^\infty (\delta_{x+h}^2 K(y) - \delta_{x-h}^2 K(y))u(y) \dd y \\
                                        & = -\int_0^\infty \delta_{2h}K(y)\delta_{2x}u(y)\dd y
                \end{aligned}
            \end{equation}
            for all $x,h \in \R$. This equation was referred to as a double symmetrisation formula in \cite{Ehrnstroem19Whithams}.

            On the left-hand side,
            \[
                N(u(x+h))-N(u(x-h)) = \int_{u(x-h)}^{u(x+h)} N'(t) = (1 + o(1))(u(x+h)^2-u(x-h)^2)
            \]
            for $0 < h < x$ as $x \to 0$. This is because $N'(t) = (2+o(1))t$ as $t \to 0$ by \Cref{ass:nonlinearity}, and because $u(0)=0$. Moreover, we further have
            \[
                u(x+h)^2-u(x-h)^2 = (u(x+h)+u(x-h))\delta_{2h}u(x) \eqsim x^{1/2} \delta_{2h}u(x)
            \]
            by \Cref{prop:homogeneousAsymptoticBehavior}.

            We next turn to the right-hand side of \eqref{eq:doubleSymmetrisation}, which we split as
            \begin{multline}
                \label{eq:doubleSymmetrisationRHS}
                -\int_0^\infty \delta_{2h}K(y)\delta_{2x}u(y)\dd y = -\parn*{\int_0^x + \int_x^\nu} \delta_{2h}H(y)\delta_{2x}u(y)\dd y\\
                -\int_0^\nu \delta_{2h}R(y)\delta_{2x}u(y)\dd y-\int_\nu^\infty \delta_{2h}K(y)\delta_{2x}u(y)\dd y
            \end{multline}
            for $0 < h < x \leq \nu$, with $H$ and $R$ as in \Cref{ass:integrabilitySymmetryAndConvexity}. Here, the final terms satisfy
            \begin{align*}
                \abs*{\int_\nu^\infty \delta_{2h}K(y)\delta_{2x}u(y)\dd y} & \leq 2h K(\nu-h) \norm{u}_{L^\infty}\lesssim h,           \\
                \abs*{\int_0^\nu \delta_{2h}R(y)\delta_{2x}u(y)\dd y}      & \leq 2\nu h\norm{R''}_{L^1}\norm{u}_{L^\infty}\lesssim h,
            \end{align*}
            where the first inequality follows by similar argument as in \Cref{lem:secondDifferenceProperties}, while the second follows from the bound on $R'$ from \Cref{lem:remainderMayBeDisregarded}. Furthermore, the first term on the right-hand side of \eqref{eq:doubleSymmetrisationRHS} satisfies
            \[
                \abs*{\int_0^x \delta_{2h} H(y)\delta_{2x}u(y)\dd y} = \frac{h}{x^{1/2}}\abs*{\int_0^{x/h}\Gamma(\tau)\delta_{2x}u(\tau h) \dd \tau} \lesssim h \int_0^\infty \Gamma(\tau)\dd \tau \lesssim h
            \]
            from \Cref{prop:homogeneousAsymptoticBehavior}, since $\Gamma$ from \eqref{eq:gamma} is integrable.

            In summary, we have demonstrated that

            \begin{equation}
                \label{eq:bootstrapWhitham}
                x^{1/2}\abs{\delta_{2h}u(x)} \lesssim  h^{1/2} \int_{x/h}^{\nu/h} \Gamma(\tau) \delta_{2x} u(\tau h)\dd \tau + h
            \end{equation}
            for $0 < h < x \leq \nu$, after possibly shrinking $\nu$. Define now the possibly infinite quantity
            \begin{equation}
                \label{eq:Cdef}
                C(\alpha) \ceq \sup_{0 < h < x \leq \nu}{x^\alpha\frac{\abs{\delta_{2h}u(x)}}{h^{2\alpha}}}
            \end{equation}
            for each $\alpha \in [0,1/2]$. As a function taking extended real values, $C$ is nondecreasing and left-continuous in $\alpha$, and is at least finite at $\alpha=0$. We emphasise that the supremum is \emph{not} taken over $\nu$.

            Let $\alpha$ be such that $C(\alpha)$ is finite. Then
            \[
                \abs{\delta_{2x}u(\tau h)} \lesssim \min(C(\alpha) x^{2\alpha} (\tau h)^{-\alpha},(\tau h)^{1/2}) \leq C(\alpha)^{1/2}x^\alpha (\tau h)^{1/2-\alpha/4}
            \]
            in the integrand in \eqref{eq:bootstrapWhitham}, by \Cref{prop:homogeneousAsymptoticBehavior} and the definition of $C(\alpha)$ in \eqref{eq:Cdef}. We have also used that $\min(a,b)\leq \sqrt{ab}$ for all $a,b \geq 0$. Inserting this into \eqref{eq:bootstrapWhitham}, we find
            \begin{equation}
                \label{eq:bootstrapWhithamInserted}
                \begin{aligned}
                    x^{1/2} \abs{\delta_{2h}u(x)} & \lesssim C(\alpha)^{1/2} h^{3/4-\alpha/2} x^\alpha \int_{x/h}^\infty \Gamma(\tau)\tau^{1/4-\alpha/2}\dd \tau + h \\
                                                  & \lesssim C(\alpha)^{1/2}hx^{\alpha/2-1/4}+h
                \end{aligned}
            \end{equation}
            for all $0 < h < x \leq \nu$. The second inequality comes from the fact that
            \[
                \int_z^\infty \Gamma(\tau)\tau^{1/4-\alpha/2}\dd \tau \lesssim \frac{1}{z^{1/4+\alpha/2}}
            \]
            uniformly in $z \geq 1$ and $\alpha$. If we now divide \eqref{eq:bootstrapWhithamInserted} by $h^{1/2 + \alpha}x^{1/4-\alpha/2}$, we arrive at
            \[
                x^{1/4+\alpha/2} \frac{\abs{\delta_{2h}u(x)}}{h^{1/2+\alpha}} \lesssim C(\alpha)^{1/2} \parn*{\frac{h}{x}}^{1/2-\alpha} + h^{1/4-\alpha/2} \parn*{\frac{h}{x}}^{1/4-\alpha/2}
            \]
            for $0 < h < x \leq \nu$. In particular, we thus have
            \begin{equation}
                \label{eq:bootstrapC}
                C(1/4+\alpha/2) \lesssim C(\alpha)^{1/2} + 1
            \end{equation}
            according to the definition in \eqref{eq:bootstrapWhithamInserted}. Crucially, the implicit constant does \emph{not} depend on $\alpha$.

            From \eqref{eq:bootstrapC}, we immediately conclude by induction that since $C(0)$ is finite, we have
            \[
                C\parn*{\frac{1}{2}-\frac{1}{2^k}} < \infty
            \]
            for all $k \geq 1$. Thus, since $C$ is nondecreasing, it is in fact finite for all $\alpha \in [0,1/2)$. Moreover,
            \[
                C(\alpha) \leq C(1/4+\alpha/2) \lesssim C(\alpha)^{1/2} + 1
            \]
            implies a uniform bound on $C(\alpha)$ for all $\alpha \in [0,1/2)$. Continuity from the left ensures that $C(1/2)$ is finite as well, concluding the proof.
        \end{proof}

        Using the regularity furnished by \eqref{eq:sharperHolderBound} in \Cref{lemma:Holder}, we are now able to fully justify a version of the calculation in \eqref{eq:derivativeLimitPrincipalValue}. We purposely avoid having to deal with principal value integrals.

        \begin{proposition}
            \label{prop:homogeneousderivative}
            Under \Cref{ass:integrabilitySymmetryAndConvexity,ass:nonlinearity,ass:solutionU}, the derivative of the solution enjoys the limit
            \[
                \lim_{x \to 0}{\frac{u'(x)}{x^{-1/2}}} = \frac{\pi}{4}
            \]
            when $K$ has a homogeneous singularity.
        \end{proposition}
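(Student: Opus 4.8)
The idea is to mirror \Cref{ssec:derivativeLimitLogarithmic}, but to reverse its two ingredients: the regularity $u'(x)\lesssim x^{-1/2}$ is now already at our disposal through \Cref{lemma:Holder}, and it is this — together with \Cref{prop:homogeneousAsymptoticBehavior} — that will pin down the limit. Put $h(x)\ceq x^{1/2}u'(x)$ and retain $g(x)=u(x)/x^{1/2}$ from \eqref{eq:homogeneousgDefinition}. By \Cref{lemma:Holder}, $h$ is bounded on $(0,\nu]$; by \Cref{prop:homogeneousAsymptoticBehavior}, $g$ is bounded on $(0,\nu]$ and $g(x)\to\tfrac{\pi}{2}$.

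\emph{An asymptotic equation for $h$.} Integrating \eqref{eq:condensedFormulationEven} by parts and then differentiating, exactly as in the passage leading to \eqref{eq:condensedFormulationEvenDifferentiated}, gives
\[
    2u(x)u'(x)\parn*{1+\tilde n(u(x))}=\delta_{2x}K(\nu)u(\nu)-\int_0^\nu\delta_{2x}K(y)u'(y)\dd y+\int_\nu^\infty\delta_{2x}K'(y)u(y)\dd y
\]
for $x\in(0,\nu)$. Using \Cref{prop:homogeneousAsymptoticBehavior}, \Cref{ass:nonlinearity} and the boundedness of $h$, the left-hand side is $\pi h(x)+o(1)$ as $x\to0$. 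On the right, the first and third terms are $O(x)$ by the monotonicity and convexity of $K$ on $\R^+$ (as in \Cref{lem:secondDifferenceProperties}), while the contribution of $\delta_{2x}R$ to the middle term is $O(x)$ by \Cref{lem:remainderMayBeDisregarded} and $\int_0^\nu u'(y)\dd y=u(\nu)$. For the dominant part, the homogeneity of $H$, the substitution $y=\tau x$ and $u'(\tau x)=\tau^{-1/2}x^{-1/2}h(\tau x)$ turn $-\int_0^\nu\delta_{2x}H(y)u'(y)\dd y$ into $\int_0^{\nu/x}\Gamma(\tau)\tau^{-1/2}h(\tau x)\dd\tau$. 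Hence
\[
    \pi h(x)=\int_0^{\nu/x}\Gamma(\tau)\tau^{-1/2}h(\tau x)\dd\tau+o(1)
\]
as $x\to0$, where $\Gamma(\tau)\tau^{-1/2}$ is nonnegative on $\R^+$ and in $L^1(\R^+)$ — comparable to $\abs{\tau-1}^{-1/2}$ near $\tau=1$ and to $\tau^{-2}$ near $\infty$ — and $\int_0^\infty\Gamma(\tau)\tau^{-1/2}\dd\tau=\pi$ by a direct computation (for instance after the substitution $\tau=t^2$).

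\emph{Passing to the limit.} In contrast to the logarithmic case, where the kernel concentrates away from the relevant scale and drops out, here it is genuinely spread over all of $\R^+$, so boundedness of $h$ does not by itself close the argument; we must evaluate the integral's limit, and the only handle is the known limit of $g$. To bring it in, fix small $\delta\in(0,1)$. The part of the integral over $\abs{\tau-1}<\delta$ is $O(\delta^{1/2})$ uniformly in $x$, since $\Gamma(\tau)\tau^{-1/2}\lesssim\abs{\tau-1}^{-1/2}$ there and $h$ is bounded. On the complement — where $\Gamma'\in L^1$ — I would integrate by parts in $\tau$, moving the derivative off $u'$ and writing $u(\tau x)=(\tau x)^{1/2}g(\tau x)$, so the integrand becomes $-\Gamma'(\tau)\tau^{1/2}g(\tau x)$. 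The boundary term at $\tau=\nu/x$ is $o(1)$ (there $\Gamma$ decays like $x^{3/2}$), the one at $\tau=0$ vanishes, and those at $\tau=1\pm\delta$ cancel up to $O(\delta^{1/2})$: the shared $\abs{\tau-1}^{-1/2}$ singularity of $\Gamma$ is multiplied there by $\tau^{1/2}g(\tau x)$, whose values across $\tau=1$ differ by $O(\delta)$ once $g(\tau x)\to\tfrac{\pi}{2}$. Letting $x\to0$ with $\delta$ fixed, dominated convergence — with $g$ bounded, $g(\tau x)\to\tfrac{\pi}{2}$ pointwise, and $\Gamma'(\tau)\tau^{1/2}\in L^1(\{\abs{\tau-1}>\delta\})$ — shows the remaining integral tends to $-\tfrac{\pi}{2}\int_{\abs{\tau-1}>\delta}\Gamma'(\tau)\tau^{1/2}\dd\tau$. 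Therefore, for each $\delta$, both $\limsup_{x\to0}\pi h(x)$ and $\liminf_{x\to0}\pi h(x)$ lie within $O(\delta^{1/2})$ of $-\tfrac{\pi}{2}\int_{\abs{\tau-1}>\delta}\Gamma'(\tau)\tau^{1/2}\dd\tau$; sending $\delta\to0$ forces $\lim_{x\to0}h(x)$ to exist and equal $-\tfrac12\pv\int_0^\infty\Gamma'(\tau)\tau^{1/2}\dd\tau$, which by one more integration by parts (with vanishing boundary terms) equals $\tfrac14\int_0^\infty\Gamma(\tau)\tau^{-1/2}\dd\tau=\tfrac{\pi}{4}$.

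I expect the main obstacle to be exactly this last stage: feeding the known limit of $g$ back into the equation for $h$ when the governing kernel refuses to localise. Integration by parts is the natural device, but it creates the non-integrable singularity of $\Gamma'$ at $\tau=1$, so one must excise a $\delta$-neighbourhood of $\tau=1$ and verify that the excised contribution and the new boundary terms both vanish as $\delta\to0$, uniformly in $x$. This is what lets the argument avoid principal-value integrals everywhere save in the final, purely computational, identification of the constant.
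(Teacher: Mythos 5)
Your proof is correct, but it follows a genuinely different route from the paper's. The paper does \emph{not} differentiate \eqref{eq:condensedFormulationEven} once to obtain an equation for $u'$; instead it returns to the double symmetrisation formula \eqref{eq:doubleSymmetrisation}, divides by $2h$, and passes to the iterated limit $\lim_{x\to0}\lim_{h\to0}$. Because the integrands there involve only \emph{differences} of $u$ (namely $\delta_{2x}u$) rather than $u'$, one never meets a $|\tau-1|^{-3/2}$ singularity, and \Cref{lemma:Holder} plus \Cref{prop:homogeneousAsymptoticBehavior} supply the dominants and the pointwise limits needed for dominated convergence directly; the answer then appears as the absolutely convergent integral $\tfrac14\int_0^\infty\tau^{-3/2}\bigl((1+\tau)^{1/2}-|1-\tau|^{1/2}\bigr)\dd\tau$, with no principal value and no $\delta$-cutoff. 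Your argument takes the route that structurally mirrors \Cref{ssec:derivativeLimitLogarithmic}: differentiate to get \eqref{eq:condensedFormulationEvenDifferentiated}, rescale to produce the kernel $\Gamma(\tau)\tau^{-1/2}$, observe that the resulting ``fixed-point'' equation for $h$ is mass-preserving (total mass $\pi$) and hence cannot by itself pin down the limit, and then feed in the known limit of $g$ by integrating by parts on the complement of a $\delta$-neighbourhood of $\tau=1$. The price is the bookkeeping you describe: the non-integrable $|\tau-1|^{-3/2}$ singularity of $\Gamma'$ forces the excision, and you must check that the excised strip contributes $O(\delta^{1/2})$ uniformly, that the boundary terms at $\tau=1\pm\delta$ cancel to $O(\delta^{1/2})$ after first sending $x\to0$, and that the residual PV integral exists and evaluates correctly. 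I checked all of these steps and they hold; your identity $\int_0^\infty\Gamma(\tau)\tau^{-1/2}\dd\tau=\pi$ and the final PV manipulation are also correct. What your version buys is a tighter parallel with the logarithmic case and an explicit display of how the limit of $h$ is forced by the limit of $g$; what the paper's version buys is a cleaner dominated-convergence argument that sidesteps principal values entirely, which is why the paper remarks that it ``purposely avoid[s] having to deal with principal value integrals''.
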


        \begin{proof}
            We return to \eqref{eq:doubleSymmetrisation}, make the same splitting of the right-hand side as in \eqref{eq:doubleSymmetrisationRHS}, and divide by $2h$; so as to get
            \begin{multline} \label{eq:doubleSymmetrisationRestatedForSimplicity}
                \frac{N(u(x+h))-N(u(x-h))}{2h} =  -\frac{1}{2h}\parn*{\int_0^x + \int_x^\nu} \delta_{2h}H(y)\delta_{2x}u(y)\dd y\\
                -\frac{1}{2h}\int_0^\nu \delta_{2h}R(y)\delta_{2x}u(y)\dd y-\frac{1}{2h}\int_\nu^\infty \delta_{2h}K(y)\delta_{2x}u(y)\dd y
            \end{multline}
            for all $0<x\leq \nu$ and $0<h\leq x/2$. The intention is to obtain the desired limit from this equation, by first letting $h \to 0$, and subsequently $x \to 0$: After doing so to the left-hand side of \eqref{eq:doubleSymmetrisationRestatedForSimplicity}, it reads
            \begin{equation}\label{eq: limitOfTheLHSInTheLimitOfTheDerivativeForTheHomogeneousCase}
                \adjustlimits\lim_{x\to0}\lim_{h\to 0}{\frac{N(u(x+h))-N(u(x-h))}{2h}} =  \lim_{x\to0}{ N'(u(x))u'(x)} = \pi\lim_{x\to0}{\frac{u'(x)}{x^{-1/2}}},
            \end{equation}
            where the final equality follows from $N'(u(x))=(2+o(1))u(x)$ and \Cref{prop:homogeneousAsymptoticBehavior}. Of course, we do not yet know that this limit actually exists.

            Turning our attention to the right-hand side of \eqref{eq:doubleSymmetrisationRestatedForSimplicity}, we next establish the limit of each integral separately. To accomplish this, we shall prove that the integrands are dominated by integrable functions, independently of $x$ and $h$. Consequently, limits and integrals may be interchanged, by using the dominated convergence theorem.

            For the first integral, we use the symmetry $\delta_{2x}u(y)=\delta_{2y}u(x)$ to write
            \begin{equation}\label{eq:splittingTheFirstIntegralOnTheRHSUp}
                \begin{aligned}
                    -\frac{1}{2h}\int_0^x \delta_{2h}H(y)\delta_{2x}u(y)\dd y & =-\frac{1}{2h}\parn*{\int_0^{2h} + \int_{2h}^x} \delta_{2h}H(y)\delta_{2y}u(x)\dd y                                                                \\
                    & =\begin{multlined}[t]\frac{1}{2h^{1/2}}\int_0^2 \Gamma(\tau) \delta_{2\tau h} u(x)\dd\tau\\- \int_{2h/x}^1 \frac{\delta_{2h}H(\tau x)}{2h}\delta_{2\tau x}u(x)x\dd \tau,\end{multlined}
                \end{aligned}
            \end{equation}
            for all $0< 2h\leq x \leq \nu$. In the first term on the right-hand side,
            \begin{equation}
                \label{eq:derivativeLimitFirstIntegralFirstTerm}
                \frac{1}{2h^{1/2}} \Gamma(\tau) \delta_{2\tau h}u(x) = \frac{1}{2}\parn*{\frac{h}{x}}^{1/2} \Gamma(\tau)\tau \parn*{\frac{x^{1/2}\delta_{2\tau h}u(x)}{\tau h}},
            \end{equation}
            and therefore
            \[
                \abs*{\frac{1}{2h^{1/2}} \Gamma(\tau) \delta_{2\tau h}u(x)}\lesssim \Gamma(\tau)\tau
            \]
            by \Cref{lemma:Holder}. Because $\tau\mapsto \Gamma(\tau)\tau$ is integrable on $[0,2]$, it follows from \eqref{eq:derivativeLimitFirstIntegralFirstTerm} and dominated convergence that the first integral on the right hand-side of \eqref{eq:splittingTheFirstIntegralOnTheRHSUp} vanishes as $h\to 0$, for each $0<x\leq \nu$.

            As for the second term on the right-hand side of \eqref{eq:splittingTheFirstIntegralOnTheRHSUp}, its integrand may be expressed as
            \[
                - \frac{\delta_{2h}H(\tau x)}{2h}\delta_{2\tau x}u(x)x= \frac{\tau}{\parn*{\tau^2-(h/x)^2}^{1/2}\parn*{(\tau+h/x)^{1/2}+(\tau-h/x)^{1/2}}} \parn*{\frac{x^{1/2}\delta_{2\tau x} u(x)}{\tau x}},
            \]
            and is thus dominated by
            \begin{equation}
                \label{eq:derivativeLimitFirstIntegralSecondTermBound}
                \abs*{- \frac{\delta_{2h}H(\tau x)}{2h}\delta_{2\tau x}u(x)x} \lesssim \frac{1}{\tau^{1/2}}
            \end{equation}
            for all $0 < 2h\leq  x \leq \nu$ and $2h/x \leq \tau \leq 1$. Moreover, we have the limit
            \[
                \lim_{h \to 0}{\parn*{- \frac{\delta_{2h}H(\tau x)}{2h}\delta_{2\tau x}u(x)x} = \frac{1}{2\tau^{3/2}}} \frac{\delta_{2\tau x}u(x)}{x^{1/2}}
            \]
            for each fixed $x \in (0,\nu]$ and $\tau \in (0,1]$, where in turn
            \[
                \lim_{x \to 0}{\parn*{\frac{1}{2\tau^{3/2}} \frac{\delta_{2\tau x}u(x)}{x^{1/2}}}}=\frac{\pi}{4\tau^{3/2}}\parn*{(1+\tau)^{1/2}-(1-\tau)^{1/2}}
            \]
            for each $\tau \in (0,1]$ by \Cref{prop:homogeneousAsymptoticBehavior}. Since the upper bound \eqref{eq:derivativeLimitFirstIntegralSecondTermBound} is integrable on $[0,1]$, we may therefore conclude that
            \begin{equation}\label{eq: limitOfTheFirstIntegralOnTheRHSForLimitOfDerivativeInHomogeneousCase}
                \adjustlimits\lim_{x\to 0}\lim_{h\to0}{\parn*{-\frac{1}{2h}\int_0^x \delta_{2h}H(y)\delta_{2x}u(y)\dd y}} =\int_0^1\frac{\pi}{4\tau^{3/2}}\parn*{(1+\tau)^{1/2}-(1-\tau)^{1/2}}\dd \tau
            \end{equation}
            from \eqref{eq:splittingTheFirstIntegralOnTheRHSUp} and the pointwise limits.

            We move on to the second integral on the right-hand side of \eqref{eq:doubleSymmetrisationRestatedForSimplicity}. It can be written as
            \[
                -\frac{1}{2h}\int_x^\nu \delta_{2h} H(y) \delta_{2x}u(y)\dd y = - \int_{1}^{\nu/x} \frac{\delta_{2h}H(\tau x)}{2h}\delta_{2x}u(\tau x)x\dd \tau,
            \]
            where \Cref{lemma:Holder} implies that the integrand
            \[
                -\frac{\delta_{2h}H(\tau x)}{2h}\delta_{2x}u(\tau x)x = \frac{\tau^{-1/2}}{\parn*{\tau^2-(h/x)^2}^{1/2}\parn*{(\tau+h/x)^{1/2}+(\tau-h/x)^{1/2}}} \parn*{\frac{(\tau x)^{1/2}\delta_{2x} u(\tau x)}{x}}
            \]
            is bounded by
            \begin{equation}
                \label{eq:derivativeLimitSecondIntegralBound}
                \abs*{-\frac{\delta_{2h}H(\tau x)}{2h}\delta_{2x}u(\tau x)x} \lesssim \frac{1}{\tau^2}
            \end{equation}
            for all $0 < 2h\leq x \leq \nu$ and $2h/x \leq \tau \leq \nu/x$. Furthermore, it admits the limit
            \[
                \lim_{h \to 0}{\parn*{-\frac{\delta_{2h}H(\tau x)}{2h}\delta_{2x}u(\tau x)x}} = \frac{1}{2\tau^{3/2}} \frac{\delta_{2x}u(\tau x)}{x^{1/2}}
            \]
            for each fixed $x \in (0,\nu]$ and $\tau \in (0,\nu/x]$, and we further have
                \[
                \lim_{x \to 0}{\parn*{\frac{1}{2\tau^{3/2}} \frac{\delta_{2x}u(\tau x)}{x^{1/2}}}} = \frac{\pi}{4\tau^{3/2}}\parn*{(1+\tau)^{1/2}-(\tau-1)^{1/2}}
                \]
                for every $\tau > 0$ by \Cref{prop:homogeneousAsymptoticBehavior}. The upper bound in \eqref{eq:derivativeLimitSecondIntegralBound} is integrable on $[1,\infty)$, and we therefore have
            \begin{equation}\label{eq: limitOfTheSecondIntegralOnTheRHSForLimitOfDerivativeInHomogeneousCase}
                \adjustlimits\lim_{x\to 0}\lim_{h\to0}{\parn*{-\frac{1}{2h}\int_{x}^\nu \delta_{2h}H(y)\delta_{2x}u(y)\dd y }}=\int_1^\infty\frac{\pi}{4\tau^{3/2}}\parn*{(1+\tau)^{1/2}-(\tau-1)^{1/2}}\dd \tau.
            \end{equation}

            For the third integral on the right-hand side of \eqref{eq:doubleSymmetrisationRestatedForSimplicity}, we use the bound on $R'$ from  \Cref{lem:remainderMayBeDisregarded} to find that the integrand is dominated by
            \[
                \abs*{\frac{\delta_{2h}R(y)}{2h} \delta_{2x}u(y)} \lesssim 1
            \]
            for all $0 < 2h\leq x \leq \nu$ and $0 < y < \nu$. We note also the two limits
            \[
                \lim_{h \to 0}{\parn*{\frac{\delta_{2h}R(y)}{2h} \delta_{2x}u(y)}} = R'(y)\delta_{2x}u(y)
            \]
            for every $0 < x,y \leq \nu$, and
            \[
                \lim_{x \to 0}{\parn*{R'(y)\delta_{2x}u(y)}} = 0
            \]
            for all $0 < y \leq \nu$. As before, we conclude that
            \begin{equation}\label{eq: limitOfTheThirdIntegralOnTheRHSForLimitOfDerivativeInHomogeneousCase}
                \adjustlimits\lim_{x\to 0}\lim_{h\to0}{\parn*{-\frac{1}{2h}\int_0^\nu \delta_{2h}R(y)\delta_{2x}u(y)\dd y}} =0
            \end{equation}
            by using dominated convergence.

            Finally, for the fourth integral on the right-hand side of \eqref{eq:doubleSymmetrisationRestatedForSimplicity} we exploit the convexity of $K$ to see that the integrand is dominated by
            \begin{equation}
                \label{eq:derivativeLimitFourthIntegralBound}
                \abs*{\frac{\delta_{2h}K(y)}{2h} \delta_{2x}u(y)} \lesssim -K'(y-\nu/2)
            \end{equation}
            for all $0 < x \leq \nu$, $0 < h < x/2$, and $y \geq \nu$.  We observe also that
            \[
                \lim_{h \to 0}{\parn*{\frac{\delta_{2h}K(y)}{2h} \delta_{2x}u(y)}} = K'(y)\delta_{2x}u(y)
            \]
            for every $0 < x \leq \nu \leq y$, and that
            \[
                \lim_{x \to 0}{\parn*{K'(y)\delta_{2x}u(y)}}=0
            \]
            for all $y \geq \nu$, so this integral also vanishes in the limit:
            \begin{equation}\label{eq: limitOfTheFourthIntegralOnTheRHSForLimitOfDerivativeInHomogeneousCase}
                \lim_{x\to 0}\lim_{h\to0} -\frac{1}{2h}\int_\nu^\infty \delta_{2h}K(y)\delta_{2x}u(y)\dd y =0.
            \end{equation}

            In summary, the four limits \eqref{eq: limitOfTheFirstIntegralOnTheRHSForLimitOfDerivativeInHomogeneousCase}, \eqref{eq: limitOfTheSecondIntegralOnTheRHSForLimitOfDerivativeInHomogeneousCase}, \eqref{eq: limitOfTheThirdIntegralOnTheRHSForLimitOfDerivativeInHomogeneousCase}, and \eqref{eq: limitOfTheFourthIntegralOnTheRHSForLimitOfDerivativeInHomogeneousCase} show that we can conclude from equation \eqref{eq:doubleSymmetrisationRestatedForSimplicity} that \eqref{eq: limitOfTheLHSInTheLimitOfTheDerivativeForTheHomogeneousCase} exists, and therefore that
            \[
                \lim_{x \to 0}{\frac{u'(x)}{x^{-1/2}}} = \frac{1}{4}\int_0^\infty \frac{(1+\tau)^{1/2}-\abs{1-\tau}^{1/2}}{\tau^{3/2}}\dd \tau= \frac{\pi}{4},
            \]
            after dividing by $\pi$. The last equality holds by an argument similar to the one utilised in the proof of \Cref{lem:toyEquation}.
        \end{proof}

        We remark that the method of proof in \Cref{lemma:Holder,prop:homogeneousderivative} may be repeated inductively: It can be seen from the proof of \Cref{prop:homogeneousderivative} that $u'$ satisfies
        \begin{multline*}
            N'(u(x))u'(x) = \int_0^1 \frac{1}{2\tau^{3/2}}\frac{\delta_{2\tau x}u(x)}{x^{1/2}}\dd \tau + \int_1^\infty \frac{1}{2\tau^{3/2}}\frac{\delta_{2x}u(\tau x)}{x^{1/2}}\dd \tau\\
            -\int_0^\nu R'(y)\delta_{2x}u(y)\dd y - \int_\nu^\infty K'(y)\delta_{2x}u(y)\dd y
        \end{multline*}
        for all $0 < x \leq \nu$ on which similar analysis can be applied to study $u''$. More generally, given a nonlinearity $n$ that is $C^N$ and vanishing up to order $N-1$ at the origin, a more regular $R$, a solution $u$ that is $C^N(0,\nu]$, with estimates $u^{(k)}(x) =  \parn*{\pi/2 + o(1) } \mr{D}^k x^{1/2}$ and higher-order analogues of \Cref{lemma:Holder} for $k = 0, 1,2, \ldots N-1$, one may prove that $u^{N}(x) =  \parn*{ \pi/2 + o(1) } \mr{D}^{N} x^{1/2}$. This is done by taking another difference in the equation satisfied by $u^{(N-1)}$, establishing its analogue of \Cref{lemma:Holder}, and then progressing in a similar manner to the proof of \Cref{prop:homogeneousderivative}. We refrain from pursuing this, and will be content with asymptotics for the first derivative.

    \subsection{The Whitham equation}
        \label{sec:applicationForHighestWavesOfUnidirectionalWhithamEquations}
        Proceeding as for its bidirectional counterpart in \Cref{sec:applicationForTheHighestWavesOfBidirectionalWhithamEquation}, we insert the steady-wave ansatz $\phi(x,t)=\varphi(x-ct)$ into \eqref{eq:unidirectionalWhitham} to arrive at the \textit{steady Whitham equation}
        \begin{equation}
            \label{eq:steadyUnidirectionalWhitham}
            K_W * \varphi=\varphi(c-\varphi)
        \end{equation}
        after integration. The integration constant is chosen to be zero, which can be done by Galilean transformation, like in \cite{Ehrnstroem19Whithams}. Again, one observes that the right-hand side of \eqref{eq:steadyUnidirectionalWhitham} is increasing to the left of its maximum at $\varphi =c/2$. This is the height of a highest wave for this equation. If $\varphi$ assumes this height at the origin, and is even, then
        \[
            u \ceq \sqrt{2\pi}\parn*{\frac{c}{2}-\varphi}
        \]
        satisfies the equation
        \[
            u(x)^2 = \int_0^\infty \delta_x^2(\sqrt{2\pi}K_W)(y)u(y)\dd y,
        \]
        which is of the desired form \eqref{eq:condensedFormulationEven}. It is immediate that \Cref{ass:nonlinearity} holds, and it is well-known \cite{Ehrnstroem19Whithams} that \Cref{ass:integrabilitySymmetryAndConvexity} is satisfied. See also the comment on the precise behaviour of $K_W$ in \Cref{rem:the series expansion} below.

        The existence of a limiting $2\pi$-periodic solution $(\varphi,c)$ of \eqref{eq:steadyUnidirectionalWhitham} was proved in \cite{Ehrnstroem19Whithams}. This solution is even, assumes $\varphi(0) = c/2$, decreases on $(0,\pi)$, and is smooth on $(0,2\pi)$. In particular, \Cref{ass:solutionU} holds. All assumptions required for \Cref{prop:homogeneousAsymptoticBehavior} and \Cref{prop:homogeneousderivative} are therefore satisfied, and we may settle a conjecture posed in the aforementioned paper. Our result also applies equally well to the highest \emph{solitary} waves recently found in \cite{Truong22Global} and \cite{Ehrnstroem23Direct}. As shown in \cite{Bruell17Symmetry,Bruell23Symmetry}, such solitary waves are necessarily even, and smooth and decreasing on $\R^+$.

        \begin{corollary}
            Let $\varphi$ be a solution of the steady Whitham equation \eqref{eq:steadyUnidirectionalWhitham} that is even, achieves $\varphi(0)=c/2$, and is smooth and decreasing on a nonempty interval $(0,\nu)$. Then
            \begin{align*}
                \varphi(x)  & = \frac{c}{2} - \parn*{\sqrt{\frac{\pi}{8}}+o(1)}x^{1/2} \\
                \varphi'(x) & = - \frac{1}{2}\parn*{\sqrt{\frac{\pi}{8}}+o(1)}x^{-1/2}
            \end{align*}
            as $x \searrow 0$.
        \end{corollary}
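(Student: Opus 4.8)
The plan is to deduce the corollary directly from \Cref{prop:homogeneousAsymptoticBehavior,prop:homogeneousderivative}, applied to the rescaled unknown introduced just above the statement. First I would set $u \ceq \sqrt{2\pi}\,(c/2 - \varphi)$ and recall that, since $f(c/2) - f(t) = (c/2 - t)^2$ for $f(t) = ct - t^2$, equation \eqref{eq:steadyUnidirectionalWhitham} forces
\[
    u(x)^2 = \int_0^\infty \delta_x^2\bigl(\sqrt{2\pi}\,K_W\bigr)(y)\,u(y)\dd y,
\]
which is precisely \eqref{eq:condensedFormulationEven} with $n \equiv 0$ and kernel $\sqrt{2\pi}\,K_W$. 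The role of the prefactor $\sqrt{2\pi}$ is exactly to normalise the leading singularity of the kernel to $\abs{x}^{-1/2}$.

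Next I would check the three standing hypotheses for this data. \Cref{ass:nonlinearity} is trivial since $n\equiv 0$. For \Cref{ass:integrabilitySymmetryAndConvexity}, one uses that $K_W$ is even, positive, and convex on $\R^+$, together with the decomposition $\sqrt{2\pi}\,K_W(x) = \abs{x}^{-1/2} + R(x)$ with $R'' \in L^1(\R)$; this follows from the behaviour at infinity of the symbol $\hat K_W(\xi) = \sqrt{\tanh(\xi)/\xi}$, and is recorded in \Cref{rem:the series expansion} (compare also \cite{Ehrnstroem19Whithams}). For \Cref{ass:solutionU}, the assumptions on $\varphi$ give that $u$ is continuous and bounded, nonnegative since $\varphi \le c/2$, vanishing at the origin, and $C^1$ and increasing on $(0,\nu)$.

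With the hypotheses in place, \Cref{prop:homogeneousAsymptoticBehavior,prop:homogeneousderivative} yield $u(x)/x^{1/2} \to \pi/2$ and $u'(x)/x^{-1/2} \to \pi/4$ as $x \searrow 0$. It then only remains to undo the scaling: from $c/2 - \varphi(x) = u(x)/\sqrt{2\pi}$ we obtain
\[
    \lim_{x\searrow 0}\frac{c/2 - \varphi(x)}{x^{1/2}} = \frac{1}{\sqrt{2\pi}}\cdot\frac{\pi}{2} = \sqrt{\frac{\pi}{8}},
\]
and from $\varphi'(x) = -u'(x)/\sqrt{2\pi}$ we get $\varphi'(x)/x^{-1/2} \to -\tfrac12\sqrt{\pi/8}$, which are the two claimed expansions. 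I do not expect a serious obstacle in this argument: the only non-bookkeeping ingredient is the kernel decomposition required by \Cref{ass:integrabilitySymmetryAndConvexity} for $K_W$, which is essentially classical and already available in \cite{Ehrnstroem19Whithams}, so that the remaining work is just the constant arithmetic above.
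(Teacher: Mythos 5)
Your proposal is correct and follows the same route as the paper: rescale via $u = \sqrt{2\pi}(c/2-\varphi)$ to bring the steady Whitham equation into the form \eqref{eq:condensedFormulationEven}, verify \Cref{ass:nonlinearity,ass:integrabilitySymmetryAndConvexity,ass:solutionU}, invoke \Cref{prop:homogeneousAsymptoticBehavior,prop:homogeneousderivative}, and undo the scaling. The constant arithmetic checks out.
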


        \begin{remark}
            \label{rem:the series expansion}
            It is possible to give the series expansion
            \[
                K_W(x) = \frac{1}{\sqrt{2\pi}}\sum_{n=0}^\infty (-1)^n \binom{\floor{n/2}-1/2}{\floor{n/2}}\parn*{\frac{2n + \sqrt{4n^2+x^2}}{4n^2+x^2}}^{1/2}.
            \]
            for the Whitham kernel. To the best of our knowledge, this expansion of the kernel for standard linear gravity wave dispersion is new. We note, in particular, that the first term is precisely the singular part of the kernel. As written, the series is only conditionally convergent, but this can be remedied by merging the terms corresponding to $n=2k-1$ and $n=2k$ for $k \geq 1$. These all become smooth, even, negative, and increasing on $\R^+$.

            To prove the series expansion, the key observation is that
            \[
                \parn*{\frac{\tanh(\xi)}{\xi}}^{1/2} = \sum_{n=0}^\infty \binom{\floor{n/2}-1/2}{\floor{n/2}}(-1)^n\frac{e^{-2\abs{\xi}n}}{\abs{\xi}^{1/2}}
            \]
            for $\xi \neq 0$, which can be seen by writing the numerator in terms of a binomial series. Here
            \begin{align*}
                \frac{1}{2\pi}\int_\R \frac{e^{-2\abs{\xi}n}}{\abs{\xi}^{1/2}}e^{i\xi x}\dd \xi & = \frac{1}{\pi}\re{\int_0^\infty \frac{e^{-(2n-ix)\xi}}{\xi^{1/2}}\dd\xi}=\frac{1}{\sqrt{\pi}}\re{(2n-ix)^{-1/2}} \\
                                                                                                & =\frac{1}{\sqrt{2\pi}}\parn*{\frac{2n + \sqrt{4n^2+x^2}}{4n^2+x^2}}^{1/2}
            \end{align*}
            for all $n \geq 1$, and in the sense of distributions when $n=0$.
        \end{remark}
    \printbibliography[title=References]

@Article{AcevesSanchez13Numerical,
  author   = {Aceves-Sánchez, P. and Minzoni, A. A. and Panayotaros, P.},
  journal  = {Wave Motion},
  title    = {Numerical study of a nonlocal model for water-waves with variable depth},
  year     = {2013},
  issn     = {0165-2125},
  number   = {1},
  pages    = {80--93},
  volume   = {50},
  doi      = {10.1016/j.wavemoti.2012.07.002},
  fjournal = {Wave Motion. An International Journal Reporting Research on Wave Phenomena},
}

@Article{Afram21Steady,
  author   = {Afram, Obed Opoku},
  journal  = {Trans. R. Norw. Soc. Sci. Lett.},
  title    = {On steady solutions of a generalized {W}hitham equation},
  year     = {2021},
  issn     = {1893-9708},
  pages    = {5--29},
  volume   = {3},
  doi      = {11250/3054570},
  fjournal = {Transactions of the Royal Norwegian Society of Science and Letters},
}

@Article{Amick82Stokes,
  author   = {Amick, C. J. and Fraenkel, L. E. and Toland, J. F.},
  journal  = {Acta Math.},
  title    = {On the {S}tokes conjecture for the wave of extreme form},
  year     = {1982},
  issn     = {0001-5962},
  pages    = {193--214},
  volume   = {148},
  doi      = {10.1007/BF02392728},
  fjournal = {Acta Mathematica},
}

@Article{Arnesen19Non,
  author  = {Arnesen, Mathias Nikolai},
  journal = {J. Math. Anal. Appl.},
  title   = {A non-local approach to waves of maximal height for the {Degasperis}--{Procesi} equation},
  year    = {2019},
  issn    = {0022-247X},
  number  = {1},
  pages   = {25--44},
  volume  = {479},
  doi     = {10.1016/j.jmaa.2019.06.014},
}

@Article{Arnesen22Decay,
  author  = {Arnesen, Mathias Nikolai},
  journal = {J. Math. Anal. Appl.},
  title   = {Decay and symmetry of solitary waves},
  year    = {2022},
  issn    = {0022-247X},
  number  = {1},
  pages   = {125450},
  volume  = {507},
  doi     = {10.1016/j.jmaa.2021.125450},
}

@Article{Bruell17Symmetry,
  author  = {Bruell, Gabriele and Ehrnström, Mats and Pei, Long},
  journal = {J. Differential Equations},
  title   = {Symmetry and decay of traveling wave solutions to the {Whitham} equation},
  year    = {2017},
  issn    = {0022-0396},
  number  = {8},
  pages   = {4232--4254},
  volume  = {262},
  doi     = {10.1016/j.jde.2017.01.011},
}

@Article{Bruell21Waves,
  author  = {Bruell, Gabriele and Dhara, Raj Narayan},
  journal = {Indiana Univ. Math. J.},
  title   = {Waves of maximal height for a class of nonlocal equations with homogeneous symbols},
  year    = {2021},
  issn    = {0022-2518},
  number  = {2},
  pages   = {711--742},
  volume  = {70},
  doi     = {10.1512/iumj.2021.70.8368},
}

@Article{Bruell23Symmetry,
  author   = {Bruell, Gabriele and Pei, Long},
  journal  = {SIAM J. Math. Anal.},
  title    = {Symmetry of periodic traveling waves for nonlocal dispersive equations},
  year     = {2023},
  issn     = {0036-1410},
  number   = {1},
  pages    = {486--507},
  volume   = {55},
  doi      = {10.1137/21M1433162},
  fjournal = {SIAM Journal on Mathematical Analysis},
}

@Article{Carter18Bidirectional,
  author  = {Carter, John D.},
  journal = {Wave Motion},
  title   = {Bidirectional {Whitham} equations as models of waves on shallow water},
  year    = {2018},
  issn    = {0165-2125},
  pages   = {51--61},
  volume  = {82},
  doi     = {10.1016/j.wavemoti.2018.07.004},
}

@Article{Dahne23Highest,
  author   = {Dahne, Joel and Gómez-Serrano, Javier},
  journal  = {Arch. Ration. Mech. Anal.},
  title    = {Highest cusped waves for the {B}urgers-{H}ilbert equation},
  year     = {2023},
  issn     = {0003-9527},
  number   = {5},
  pages    = {74},
  volume   = {247},
  doi      = {10.1007/s00205-023-01904-6},
  fjournal = {Archive for Rational Mechanics and Analysis},
}

@Article{Edmunds00Embeddings,
  author  = {Edmunds, David E. and Haroske, Dorothee D.},
  journal = {J. Approx. Theory},
  title   = {Embeddings in spaces of {Lipschitz} type, entropy and approximation numbers, and applications},
  year    = {2000},
  issn    = {0021-9045},
  number  = {2},
  pages   = {226--271},
  volume  = {104},
  doi     = {10.1006/jath.2000.3453},
}

@InProceedings{Ehrnstroem15Whithams,
  author    = {Ehrnström, M.},
  booktitle = {Oberwolfach {R}eport {N}o. 19},
  title     = {On {Whitham}'s conjecture of a highest cusped wave for a nonlocal shallow water wave equation},
  year      = {2015},
}

@Article{Ehrnstroem19Existence,
  author  = {Ehrnström, Mats and Johnson, Mathew A. and Claassen, Kyle M.},
  journal = {Arch. Ration. Mech. Anal.},
  title   = {Existence of a highest wave in a fully dispersive two-way shallow water model},
  year    = {2019},
  issn    = {0003-9527},
  number  = {3},
  pages   = {1635--1673},
  volume  = {231},
  doi     = {10.1007/s00205-018-1306-5},
}

@Article{Ehrnstroem19Whithams,
  author  = {Ehrnström, Mats and Wahlén, Erik},
  journal = {Ann. Inst. H. Poincaré Anal. Non Linéaire},
  title   = {On {Whitham}'s conjecture of a highest cusped wave for a nonlocal dispersive equation},
  year    = {2019},
  issn    = {0294-1449},
  number  = {6},
  pages   = {1603--1637},
  volume  = {36},
  doi     = {10.1016/j.anihpc.2019.02.006},
}

@Article{Ehrnstroem23Direct,
  author   = {Ehrnström, Mats and Nik, Katerina and Walker, Christoph},
  journal  = {Proc. Amer. Math. Soc.},
  title    = {A direct construction of a full family of {W}hitham solitary waves},
  year     = {2023},
  issn     = {0002-9939},
  number   = {3},
  pages    = {1247--1261},
  volume   = {151},
  doi      = {10.1090/proc/16191},
  fjournal = {Proceedings of the American Mathematical Society},
}

@Article{Emerald21Rigorous,
  author  = {Emerald, Louis},
  journal = {SIAM J. Math. Anal.},
  title   = {Rigorous derivation from the water waves equations of some full dispersion shallow water models},
  year    = {2021},
  issn    = {0036-1410},
  number  = {4},
  pages   = {3772--3800},
  volume  = {53},
  doi     = {10.1137/20M1332049},
}

@Article{Emerald21Rigorousa,
  author  = {Emerald, Louis},
  journal = {Nonlinearity},
  title   = {Rigorous derivation of the {Whitham} equations from the water waves equations in the shallow water regime},
  year    = {2021},
  issn    = {0951-7715},
  number  = {11},
  pages   = {7470--7509},
  volume  = {34},
  doi     = {10.1088/1361-6544/ac24df},
}

@Article{Enciso18Convexity,
  author        = {Enciso, Alberto and Gómez-Serrano, Javier and Vergara, Bruno},
  title         = {Convexity of Whitham's highest cusped wave},
  year          = {2018},
  archiveprefix = {arXiv},
  eprint        = {1810.10935},
  primaryclass  = {math.AP},
}

@Article{Fjordholm18Sharp,
  author  = {Fjordholm, Ulrik Skre},
  journal = {C. R. Math. Acad. Sci. Paris},
  title   = {Sharp uniqueness conditions for one-dimensional, autonomous ordinary differential equations},
  year    = {2018},
  issn    = {1631-073X},
  number  = {9},
  pages   = {916--921},
  volume  = {356},
  doi     = {10.1016/j.crma.2018.07.008},
}

@Article{Geyer19Linear,
  author  = {Geyer, Anna and Pelinovsky, D.},
  journal = {SIAM J. Math. Anal.},
  title   = {Linear instability and uniqueness of the peaked periodic wave in the reduced {Ostrovsky} equation},
  year    = {2019},
  issn    = {0036-1410},
  number  = {2},
  pages   = {1188--1208},
  volume  = {51},
  doi     = {10.1137/18M117978X},
}

@Book{Grafakos14Modern,
  author    = {Grafakos, Loukas},
  publisher = {Springer, New York},
  title     = {Modern {Fourier} analysis},
  year      = {2014},
  isbn      = {978-1-4939-1229-2},
  series    = {Graduate Texts in Mathematics},
  volume    = {250},
  doi       = {10.1007/978-1-4939-1230-8},
  pages     = {xvi+624},
}

@Article{Hildrum23Periodic,
  author   = {Hildrum, Fredrik and Xue, Jun},
  journal  = {J. Differential Equations},
  title    = {Periodic {H}ölder waves in a class of negative-order dispersive equations},
  year     = {2023},
  issn     = {0022-0396},
  pages    = {752--789},
  volume   = {343},
  doi      = {10.1016/j.jde.2022.10.023},
  fjournal = {Journal of Differential Equations},
  keywords = {35B10 (35B32 35B65 35S30 45M15 49J52)},
  mrnumber = {4504564},
}

@Article{Hur17Wave,
  author   = {Hur, Vera Mikyoung},
  journal  = {Adv. Math.},
  title    = {Wave breaking in the {W}hitham equation},
  year     = {2017},
  issn     = {0001-8708},
  pages    = {410--437},
  volume   = {317},
  doi      = {10.1016/j.aim.2017.07.006},
  fjournal = {Advances in Mathematics},
}

@Article{Klein18Whitham,
  author   = {Klein, Christian and Linares, Felipe and Pilod, Didier and Saut, Jean-Claude},
  journal  = {Stud. Appl. Math.},
  title    = {On {W}hitham and related equations},
  year     = {2018},
  issn     = {0022-2526},
  number   = {2},
  pages    = {133--177},
  volume   = {140},
  doi      = {10.1111/sapm.12194},
  fjournal = {Studies in Applied Mathematics},
}

@Book{Lannes13Water,
  author    = {Lannes, David},
  publisher = {American Mathematical Society, Providence, RI},
  title     = {The Water Waves Problem},
  year      = {2013},
  isbn      = {978-0-8218-9470-5},
  series    = {Mathematical Surveys and Monographs},
  volume    = {188},
  doi       = {10.1090/surv/188},
  pages     = {xx+321},
}

@Article{Le22Waves,
  author  = {Le, Hung},
  journal = {Asymptot. Anal.},
  title   = {Waves of maximal height for a class of nonlocal equations with inhomogeneous symbols},
  year    = {2022},
  issn    = {0921-7134},
  number  = {4},
  pages   = {355--380},
  volume  = {127},
  doi     = {10.3233/asy-211694},
}

@Article{Moldabayev15Whitham,
  author  = {Moldabayev, Daulet and Kalisch, Henrik and Dutykh, Denys},
  journal = {Phys. D},
  title   = {The {Whitham} equation as a model for surface water waves},
  year    = {2015},
  issn    = {0167-2789},
  pages   = {99--107},
  volume  = {309},
  doi     = {10.1016/j.physd.2015.07.010},
}

@Article{Nilsson19Solitary,
  author  = {Nilsson, Dag and Wang, Yuexun},
  journal = {Z. Angew. Math. Phys.},
  title   = {Solitary wave solutions to a class of {Whitham}--{Boussinesq} systems},
  year    = {2019},
  issn    = {0044-2275},
  number  = {3},
  pages   = {70},
  volume  = {70},
  doi     = {10.1007/s00033-019-1116-0},
}

@Book{Oberhettinger90Tables,
  author    = {Oberhettinger, F.},
  publisher = {Springer-Verlag, Berlin},
  title     = {Tables of {Fourier} transforms and {Fourier} transforms of distributions},
  year      = {1990},
  isbn      = {3-540-50630-6},
  doi       = {10.1007/978-3-642-74349-8},
  keywords  = {00A20 (33-00 42-00 44-00 46-00)},
  pages     = {viii+259},
}

@Article{Oerke22Highest,
  author        = {Ørke, Magnus C.},
  title         = {Highest waves for fractional Korteweg--De Vries and Degasperis--Procesi equations},
  year          = {2022},
  archiveprefix = {arXiv},
  eprint        = {2201.13159},
  primaryclass  = {math.AP},
}

@Article{Pei19Note,
  author   = {Pei, Long and Wang, Yuexun},
  journal  = {Appl. Math. Lett.},
  title    = {A note on well-posedness of bidirectional {W}hitham equation},
  year     = {2019},
  issn     = {0893-9659},
  pages    = {215--223},
  volume   = {98},
  doi      = {10.1016/j.aml.2019.06.015},
  fjournal = {Applied Mathematics Letters. An International Journal of Rapid Publication},
  keywords = {35Q35 (35B30 76B03)},
  mrnumber = {3975130},
}

@Article{Plotnikov02Proof,
  author  = {Plotnikov, P. I.},
  journal = {Stud. Appl. Math.},
  title   = {A proof of the {Stokes} conjecture in the theory of surface waves},
  year    = {2002},
  issn    = {0022-2526},
  note    = {Translated from Dinamika Sploshn. Sredy No. 57 (1982)},
  number  = {2},
  pages   = {217--244},
  volume  = {108},
  doi     = {10.1111/1467-9590.01408},
}

@Article{Saut22Wave,
  author   = {Saut, Jean-Claude and Wang, Yuexun},
  journal  = {SIAM J. Math. Anal.},
  title    = {The wave breaking for {W}hitham-type equations revisited},
  year     = {2022},
  issn     = {0036-1410},
  number   = {2},
  pages    = {2295--2319},
  volume   = {54},
  doi      = {10.1137/20M1345207},
  fjournal = {SIAM Journal on Mathematical Analysis},
}

@Book{Taylor11Partial,
  author    = {Taylor, Michael E.},
  publisher = {Springer, New York},
  title     = {Partial differential equations {III}. {Nonlinear} equations},
  year      = {2011},
  edition   = {Second},
  isbn      = {978-1-4419-7048-0},
  series    = {Applied Mathematical Sciences},
  volume    = {117},
  doi       = {10.1007/978-1-4419-7049-7},
  pages     = {xxii+715},
}

@Article{Truong22Global,
  author   = {Truong, Tien and Wahlén, Erik and Wheeler, Miles H.},
  journal  = {Math. Ann.},
  title    = {Global bifurcation of solitary waves for the {W}hitham equation},
  year     = {2022},
  issn     = {0025-5831},
  number   = {3-4},
  pages    = {1521--1565},
  volume   = {383},
  doi      = {10.1007/s00208-021-02243-1},
  fjournal = {Mathematische Annalen},
}

@Article{Whitham67Variational,
  author    = {Whitham, G. B.},
  journal   = {Proc. A.},
  title     = {Variational methods and applications to water waves},
  year      = {1967},
  issn      = {0080-4630},
  pages     = {6--25},
  volume    = {299},
  publisher = {Royal Society of London, London},
  zbl       = {0163.21104},
}

@Book{Whitham74Linear,
  author    = {Whitham, G. B.},
  publisher = {Wiley-Interscience, New York-London-Sydney},
  title     = {Linear and nonlinear waves},
  year      = {1974},
  series    = {Pure and Applied Mathematics},
  pages     = {xvi+636},
}
    \end{document}